\numberwithin{equation}{section} 
\newtheorem{thm}{Theorem}[section]
\newtheorem{cor}[thm]{Corollary}
\newtheorem{lem}[thm]{Lemma}
\newtheorem{prop}[thm]{Proposition}
\theoremstyle{definition}	
\newtheorem{defn}[thm]{Definition}
\def\a{\alpha}
\def\l{\lambda}
\def\vp{\varphi}
\def\cO{\mathcal{O}}
\def\DD{\mathbb{D}}
\def\FF{\mathbb{F}}
\def\GG{\mathbb{G}}
\def\N{\mathbb{N}}
\def\QQ{\mathbb{Q}}
\def\SS{\mathbb{S}}
\def\W{\mathbb{W}}
\def\Z{\mathbb{Z}}
\def\End{{{\mathrm{End}}}}
\def\Ext{\mathrm{Ext}}
\def\Gal{\mathrm{Gal}}
\def\Hom{\mathrm{Hom}}
\def\Tor{\mathrm{Tor}}
\def\holim{\mathrm{holim}}
\def\lim{\mathrm{lim}}
\def\HV1{{\FF_9[u^{\pm 1}]}}
\begin{document}

\title{A MINI-COURSE ON MORAVA STABILIZER GROUPS AND THEIR COHOMOLOGY}

\author{Hans-Werner Henn}

\address{Institut de Recherche Math\'ematique Avanc\'ee, 
C.N.R.S. - Universit\'e de Strasbourg, F-67084 Strasbourg, France}

\maketitle 


\section{Introduction}

The Morava stabilizer groups play a dominating role in chromatic stable homotopy theory. 
In fact, for suitable spectra X, for example all finite spectra,  
the chromatic homotopy type of $X$ at chromatic level $n>0$  
and a given prime $p$ is largely controlled by the continuous cohomology of a certain $p$-adic 
Lie group $\GG_n$, in stable homotopy theory known under the name of Morava stabilizer group of 
level $n$ at $p$, with coefficients in the corresponding Morava module $(E_n)_*X$.  

These notes notes are slightly edited 
notes of a mini-course of $4$ lectures delivered at the Vietnam Institute 
for Advanced Study in Mathematics in August 2013. 
The aim of the course was to introduce participants to joint work of the author 
with Goerss, Karamanov, Mahowald and Rezk which uses group cohomology in a 
crucial way to give a new approach to previous work by 
Miller, Ravenel, Wilson, and by Shimomura and his collaborators. This new approach 
has lead to a better understanding of old results as well as to substantial new results.

The notes are structured as follows. In section 2 and section 3 we give a short survey on certain 
aspects of chromatic stable homotopy theory. In section 2 we recall Bousfield localization and the 
chromatic set up. In section 3 we discuss the problem of finding finite resolutions 
of the trivial $\GG_n$-module $\Z_p$ and associated resolutions 
of the $K(n)$-local sphere and we describe known resolutions. The form of these resolutions depend on 
cohomological properties of the groups $\GG_n$ and the remaining sections concentrate on those 
properties. Section 4 contains an essentially self contained discussion of some basic group theoretical 
properties of these groups. Section 5 discusses the (co)homology of these groups with trivial 
coefficients; this is self contained except for the discussion of Poincar\'e duality and the discussion of the 
case $n=2$ and $p=3$ which is only outlined.   
Section 6 concentrates mostly on the continuous cohomology 
$H^*(\GG_1,(E_1)_*)$ and gives a fairly detailed 
account on how the short resolutions of the $\GG_1$-module $\Z_p$  
can be used to understand the homotopy of $L_{K(1)}S^0$. This homotopy is closely 
related to the image of the J-homomorphism studied in the 1960's by Adams, 
Mahowald, Quillen, Sullivan, Toda and others. Section 6 also contains some brief comments on how 
the algebraic resolutions surveyed in section 3 can be used to analyze  
$H^*(\GG_2,(E_2)_*)$, at least for odd primes. 
 
\bigskip

\section{Bousfield localization and the chromatic set up}

This section is a very brief introduction to the chromatic set up. More details with more references can be found in the introduction of \cite{GHMR}. 

\subsection{Bousfield localization}

Let $E_*$ be a generalized homology theory. Bousfield locali{\-}zation 
with respect to $E_*$ is a functor $L_E$ from spectra to spectra 
together with a natural transformation $\lambda:X\to L_EX$ which is 
{\it terminal} among all $E_*$-equivalences. $L_E$ exists for all homology 
theories $E_*$ \cite{Bo}. Bousfield-localization makes precise the 
idea to ignore spectra which are trivial to the eyes of $E_*$-homology.  

\noindent
\underbar{Example} Let $MG$ be a Moore spectrum for an abelian group $G$. 
Then $L_{M\Z_{(p)}}$ resp. $L_{M\QQ}$ are the 
homotopy theoretic versions of arithmetic lo{\-}calization 
with respect to $\Z_{(p)}$ resp. $\QQ$ (e.g. homology groups and homotopy groups of a
spectrum get localized by these functors).

\subsection{Morava $K$-theories} Fix a prime $p$. We are interested in 
the localization functors $L_{K(n)}$ with respect to Morava 
$K$-theory $K(n)$. We recall that $K(n)$ is a multiplicative 
periodic cohomology theory 
with coefficient ring $K(n)_*=\FF_{p}[v_n^{\pm 1}]$, where 
$v_n$ is of degree $2(p^n-1)$ if $n>0$. In case $n=0$ the convention is that 
$K(0)=M\QQ$, independant of $p$. Furthermore $K(n)$ admits a theory 
of characteristic classes and the associated 
formal group law $\Gamma_n$ is the Honda formal group law of height $n$. 

The functors $L_{K(n)}$ are elementary 
``building blocks'' of the stable homotopy category of finite 
$p$-local complexes in the following sense.  

a) The localization functor $L_{K(n)}$ is ``simple'' in the sense 
that the category of $K(n)$-local spectra 
contains no nontrivial localizing subcategory, i.e. no non-trivial thick 
subcategory which is closed under arbitrary coproducts \cite{HS}     

b) There is a tower of localization functors 
$$
\ldots \to L_n\to L_{n-1}\to \ldots
$$ 
(with $L_n=L_{K(0)\vee\ldots \vee K(n)}$) 
together with natural transformations $id\to L_n$ such that 
$$
X\simeq \holim_n L_nX
$$ 
for every finite $p$-local spectrum $X$. Furthermore, for each $n$ and $p$  
there is a homotopy pullback diagram (a ``chromatic square'')
$$
\xymatrix@C=15pt{
L_nX \rto  \dto & L_{K(n)}X \dto \\  
L_{n-1}X \rto& L_{n-1}L_{K(n)}X \\
}
$$ 
i.e. $L_n$ is determined by $L_{K(n)}$ and $L_{n-1}$.

The functors $L_{K(n)}$ do not commute with smash products. Therefore the appropriate smash 
product of $K(n)$-local spectra $X$ and $Y$ is given by 
$X\wedge_{K(n)}Y:=L_{K(n)}(X\wedge Y)$.

\subsection{$L_{K(n)}S^0$ as homotopy fixed point spectrum}
\medskip

The functors $L_{K(n)}$ are controlled by cohomological properties of the 
Morava stabilizer group $\SS_n$ resp. $\GG_n$  where $\SS_n$ is the group 
of automorphisms of the formal group law $\Gamma_n$ 
(extended to the finite field $\FF_{q}$ with $q=p^n$). The Galois group $\Gal(\FF_q:\FF_p)$ 
acts on $\SS_n$ and $\GG_n$ is defined as semidirect product 
$\GG_n=\SS_n\rtimes \Gal(\FF_q:\FF_p)$. This group  
acts on the Lubin-Tate ring which classifies deformations  of $\Gamma_n$ (in the sense of Lubin-Tate). 
The Lubin-Tate spectrum $E_n$ is a complex oriented $2$-periodic cohomology theory  
whose associated formal group law is a universal deformation of $\Gamma_n$; 
its homotopy groups are given as $(E_n)_*=\pi_*(E_n)=\pi_0(E_n)[u^{\pm 1}]$ with $u\in \pi_{-2}(E)$ 
and $\pi_0(E_n)\cong \W_{\FF_{q}}[[u_1,\ldots,u_{n-1}]]$, the ring of power series on $n-1$ generators 
over the ring of Witt vectors of $\FF_q$. The group $\GG_n$ acts on 
deformations and hence on $(E_n)_*$, and by the Hopkins-Miller-Goerss theorem \cite{GoH} this action 
can be lifted to $E_{\infty}$-ring spectra, i.e. $\GG_n$ acts on $E_n$ through $E_{\infty}$-maps.  
 
By Devinatz-Hopkins \cite{DH} the ``homotopy fixed point spectrum''
$E_n^{h\GG_n}$ can be identified with $L_{K(n)}S^0$ and its 
Adams-Novikov spectral sequence can be identified with  the associated 
homotopy fixed point spectral sequence 
\begin{equation}\label{descentSS}
E_2^{s,t}\cong H^s_{cts}(\GG_n,(E_n)_t)\Longrightarrow 
\pi_{t-s}L_{K(n)}S^0\ . 
\end{equation}
Therefore methods of group theory and group cohomology can be used to study the $K(n)$-local 
sphere and more generally the $K(n)$-local category.

\noindent
\underbar{Warning:} The ``homotopy fixed point spectrum'' is taken with 
respect to the action of a profinite group. We will not try to explain how 
this is done in detail but we insist that in \cite{DH} there is a construction such that there is an 
associated homotopy fixed point spectral sequence with an $E_2$-term 
which is given in terms of continuous group cohomology as in (\ref{descentSS}).

\section{Resolutions of $K(n)$-local spheres}

The case $n=0$ is both exceptional and trivial:  $K(0)=M\QQ=H\QQ$ (with $H\QQ$ 
the Eilenberg-MacLane spectrum for the rationals) and $L_{K(0)}$ is rationalization. 
From now on we will assume $n>1$. 

\subsection{The example $n=1$ and $p>2$} 

The case $n=1$ is well understood. In this case we have $E_1=K\Z_p$ 
($p$-adic complex $K$-theory). The formal group law $\Gamma$ is the multiplicative 
group law given by $1+(x+_{\Gamma}y)=(1+x)(1+y)$. The endomorphism ring of $\Gamma$ over 
$\FF_p$ is isomorphic to $\Z_p$: in fact, the element $p\in \Z_p$ corresponds 
to the endomorphism $[p]_{\Gamma}(x)=(1+x)^p-1\equiv x^p\mod(p)$ and the 
canonical homomorphism $\Z\to \End(\Gamma), n\mapsto [n](x)$ extends to an 
continuous isomorphism $\Z_p\to \End(\Gamma)$.  Therefore the group 
$\GG_1=\SS_1$ can be identified with $\Z_p^{\times}$, 
the units in the $p$-adic integers. The group acts on $K\Z_p$ by Adams 
operations, and the action on its homotopy $\pi_*(K)=\Z_p[u^{\pm 1}]$ is via graded 
ring automorphisms determined by $(\l,u)\mapsto \l u$. 
\smallskip
If $p$ is odd then 
$\Z_p^{\times}\cong C_{p-1}\times \Z_p$,  
and the homotopy fixed points with respect to $\Z_p^{\times}$ can be formed in two steps, 
first with respect to the cyclic group $C_{p-1}$ and then with respect to $\Z_p$. 
Taking homotopy fixed points with respect to $C_{p-1}$ is quite simple; 
on homotopy groups it amounts to taking invariants with respect 
to the action of $C_{p-1}$. Hence we get  
$$
\pi_*(K\Z_p^{hC_{p-1}})\cong \Z_p[u^{\pm (p-1)}]\ . 
$$ 
In fact, $K\Z_p^{hC_{p-1}}$ is the Adams summand of 
$K\Z_p$. The Adams operation $\psi^{p+1}$ still acts on $K\Z_p^{hC_{p-1}}$,  
taking homotopy fixed points with respect to $\Z_p$ amounts to taking 
the fibre of $\psi^{p+1}-id$ and we get a fibration  
\begin{equation}\label{LK1odd}
L_{K(1)}S^0\to K\Z_p^{hC_{p-1}}\buildrel{\psi^{p+1}-id}\over\longrightarrow 
K\Z_p^{hC_{p-1}}\ . 
\end{equation} 

\noindent
We will get back to this in section \ref{p>2}.

\subsection{The case that $p-1$ does not divide $n$}\label{p-1not}
\medskip

The fibration (\ref{LK1odd}) can be considered as an example of a $K\Z_p$-resolution 
in the sense of Miller \cite{MillerJPAA}.  

Following Miller we say that a $K(n)$-local spectrum $I$ is {\it $E_n$-injective} if  
the canonical map  $I\to L_{K(n)}(E_n\wedge I)$ splits, i.e. it has a left inverse in the homotopy category. 
A sequence of maps $X_1\to X_2\to X_3$ is said to be {\it $E_n$-exact} if the composition 
of the two maps is nullhomotopic and if $[-, I]_*$ transforms  $X_1\to X_2\to X_3$ 
into an exact sequence of abelian groups for each $E_n$-injective 
spectrum $I$. An {\it $E_n$-resolution} of a spectrum $X$ is a sequence 
$$
I_{\bullet}: *\to X\to I^0\to I^1\to \ldots
$$
such that the sequence is $E_n$-exact and each $I^s$ is $E_n$-injective. If there exists an integer 
$k\geq 0$ such that $k$ is minimal with the property that $I^s$ is contractible for all $s>k$ then we say 
that the $E_n$-resolution is of length $k$. 

The spectrum $E_n$ is $E_n$-injective because $E_n$ is $K(n)$-local and a ring spectrum.

The following result is in essence due to Morava.
 
\begin{thm}\label{En-res}\cite{HennRes} 
If $n$ is neither divisible by $p-1$ nor by $p$ 
then $L_{K(n)}S^0$ admits an $E_n$-resolution of length $n^2$ 
in which each $I^s$ is a summand in a finite wedge of $E_n$'s. 
\end{thm}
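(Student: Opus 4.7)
The plan is to translate the problem into one about a finite projective resolution over the Iwasawa algebra $\Z_p[[\GG_n]]$ and then realize it topologically via Devinatz--Hopkins.

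First, I would verify that $\GG_n$ is $p$-torsion free. Writing $\GG_n = \SS_n \rtimes \Gal(\FF_q/\FF_p)$, the units in the maximal order of the division algebra of Hasse invariant $1/n$ over $\QQ_p$ contain an element of order $p$ precisely when $\QQ_p(\zeta_p)$ embeds in that algebra, i.e.\ when $(p-1)\mid n$, while the cyclic Galois factor of order $n$ contributes $p$-torsion only when $p\mid n$. Our two hypotheses rule out both possibilities, so $\GG_n$ has no elements of order $p$.

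Second, since $\GG_n$ is a compact $p$-adic analytic group of dimension $n^2$ with no $p$-torsion, a standard theorem of Lazard/Serre yields $\mathrm{cd}_p(\GG_n) = n^2$. The trivial continuous $\Z_p[[\GG_n]]$-module $\Z_p$ therefore has projective dimension exactly $n^2$ (the upper bound from $\mathrm{cd}_p$, the lower bound from Poincar\'e duality for the analytic group $\GG_n$), so it admits a projective resolution of length $n^2$. Noetherianness of the Iwasawa algebra together with the finite presentation of $\Z_p$ allow us to arrange the terms $P_s$ to be finitely generated projective, hence direct summands of $\Z_p[[\GG_n]]^{r_s}$ for suitable $r_s$.

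Third, I would realize this resolution topologically. Under the Morava module functor $X \mapsto (E_n)^\vee_*(X)$, a finite wedge of $E_n$'s is sent to a finite direct sum of copies of $\mathrm{Map}_{cts}(\GG_n,(E_n)_*)$, and a direct summand of such a wedge corresponds to a finitely generated projective object in the category of Morava modules. Base-changing the algebraic resolution of Step~2 along $\Z_p[[\GG_n]] \to (E_n)_*[[\GG_n]]$ yields a finite projective resolution of $(E_n)_* \cong M(L_{K(n)}S^0)$; combining the Devinatz--Hopkins equivalence with Miller's formalism of $E_n$-resolutions lifts this algebraic data to an $E_n$-resolution of $L_{K(n)}S^0$ of length $n^2$ with the required form of terms.

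The main obstacle is the realization step: one must lift algebraic morphisms of projective Morava modules to actual maps of spectra, verify $E_n$-exactness of the resulting tower, and ensure minimality so that the length is exactly $n^2$. This is made possible by the strong dualizability of summands of finite wedges of $E_n$ in the $K(n)$-local category, which makes the Morava module functor faithful enough on such objects to convert algebraic splittings into topological ones; once this translation principle is set up, the bound on the length is immediate from the cohomological dimension computation in Step~2.
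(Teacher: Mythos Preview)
Your proposal is correct and follows essentially the same route the paper indicates: the paper does not prove this theorem in the body but explains in the remark immediately following it that the result is derived from the existence of a finite projective resolution
\[
0\to P_{n^2}\to \cdots \to P_0\to \Z_p\to 0
\]
of the trivial $\Z_p[[\GG_n]]$-module, which is then realized topologically so that $(E_n)_*(I_\bullet)\cong \Hom_{cts}(P_\bullet,(E_n)_*)$. Your Steps~1--2 (torsion-freeness from the two divisibility hypotheses, then Lazard/Serre for $\mathrm{cd}_p(\GG_n)=n^2$ and finite generation of the $P_s$) supply exactly the input the paper's remark presupposes, and your Step~3 is the realization step the remark alludes to; the identification of summands of finite wedges of $E_n$ with finitely generated projective Morava modules is indeed the mechanism used in \cite{HennRes}.
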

\smallskip

\noindent
\underbar{Remarks} a) Suppose $G=\lim_{\a}G_{\a}$ is a profinite group and suppose $p$ is a prime.  
We write $\Z_p[[G]]=\lim_{\a}\Z_p[G_{\a}]$ for the profinitely completed group algebra over $\Z_p$. 
Likewise, for a profinite set $S=\lim_{\a}S_{\a}$ we write $\Z_p[[S]]$ for $\lim_{\a}\Z_p[S_{\a}]$.  
The theorem is derived from the existence of a finite projective resolution of length $n^2$    
$$
P_{\bullet}:   0\to P_{n^2}\to \ldots\to P_0\to \Z_p\to 0
$$ 
of the trivial $\GG_n$-module $\Z_p$  in the category of 
profinite $\Z_p[[\GG_n]]$-modules. 
A more precise form of the theorem is that the $E_n$-resolution ``realizes the projective resolution" in the sense that there is an isomorphism of chain complexes 
\begin{equation}\label{realisation}
\Hom_{cts}(P_{\bullet},(E_n)_*)\cong E_*(I_{\bullet}) 
\end{equation} 
where here and elesewhere in these notes we adopt the convention that 
$(E_n)_*X$ for $K(n)$-local $X$ means $\pi_*(E_n\wedge_{K(n)}X)$. 

b) The assumption that $n$ is divisible by $p$ (but not by $p-1$) is not 
a very serious restriction. There is still a useful variation of this theorem which holds. 
However, the assumption that $n$ is not divisible by $p-1$ is quite crucial.

\subsection{The example $n=2$ and $p>3$}\label{n=2-p>3}

In the case $n=2$ and $p>3$ (even if $p=3$) the group $\GG_2$ can be decomposed 
as a product $\GG_2\cong \GG_2^1\times \Z_p$ (cf. section \ref{reduced}). 
The following two results are analogues of results of Ravenel 
(cf. chapter 6 of \cite{Ravgreen}).

\begin{thm}\label{G2res}\cite{HennRes}  
There is an exact complex of projective $\Z_p[[\GG_2^1]]$-modules 
$$ 
0\to C_3\to C_2\to C_1\to C_0\to \Z_p\to 0 
$$ 
with 
$C_0=C_3=\Z_3[[\GG_2/F_{2(p^2-1)}]]$ and 
$C_1=C_2=\Z_3[[\GG_2^1]]\otimes_{\Z_p[F_{2(p^2-1)}]}\l_{1-p}$ 
where $\l_{1-p}$ is a certain projective $\Z_p[F_{2(p^2-1)}]$-module of $\Z_p$-rank $2$ 
and $F_{2(p^2-1)}$ is a maximal finite subgroup of order $2(p^2-1)$ of $\GG_n^1$. 
\end{thm}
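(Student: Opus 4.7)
The plan is to construct the complex by combining a careful low-dimensional analysis with Poincaré–Lazard duality for $\GG_2^1$. Because $p>3$, the order $2(p^2-1)=2(p-1)(p+1)$ is prime to $p$, so $\Z_p[F_{2(p^2-1)}]$ is a maximal order over $\Z_p$ in which every lattice is projective; in particular both $\Z_p$ and the specified rank-two lattice $\lambda_{1-p}$ are projective, whence the induced modules $C_0$, $C_3$, $C_1$, $C_2$ are projective as $\Z_p[[\GG_2^1]]$-modules by transitivity of induction. This settles the projectivity claims and reduces the theorem to constructing suitable differentials and proving exactness.

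Next I would define $\varepsilon\colon C_0\to\Z_p$ to be the augmentation sending every coset to $1$ and construct $\partial_1\colon C_1\to C_0$ using Shapiro's lemma: a $\GG_2^1$-equivariant map $\Z_p[[\GG_2^1]]\otimes_{\Z_p[F]}\lambda_{1-p}\to \ker\varepsilon$ is the same datum as an $F$-equivariant map $\lambda_{1-p}\to\ker\varepsilon$, and the module $\lambda_{1-p}$ is reverse-engineered from the requirement that a topological generator of a $p$-Sylow of $\GG_2^1$ yield such a map; concretely, $\lambda_{1-p}$ should be the restriction to $F_{2(p^2-1)}$ of the adjoint action on a rank-two piece of the associated graded of $\GG_2^1$ in its Lazard filtration, the exponent $1-p$ recording the shift imposed by the commutator bracket. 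With the first two differentials in hand, I would extend by Poincaré duality: for $p>3$ the group $\GG_2^1$ is a torsion-free compact $p$-adic analytic group of dimension $3$, hence a Poincaré duality group of dimension $3$ over $\Z_p$ with \emph{trivial} dualizing module, so applying $\Hom_{\Z_p[[\GG_2^1]]}(-,\Z_p[[\GG_2^1]])$ to the partial resolution $C_1\to C_0\to\Z_p$ produces $C_3\to C_2$ after identifying each $C_i^\vee$ with the expected induced module (using the self-duality $\lambda_{1-p}^{*}\cong\lambda_{1-p}$ as $F$-modules, up to the recorded twist).

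The main obstacle, in my view, is exactness at the two interior positions: projectivity of the $C_i$ is not enough, and one must show that the constructed maps have precisely the right kernels and cokernels. My approach here would be to first compute $H^{*}(\GG_2^1,\Z_p)$ independently (which is the content previewed in Section 5 of the mini-course) and then compare the known Poincaré series of this cohomology with the one forced by the candidate complex via the hyper-Ext spectral sequence; exactness can then be read off by Nakayama's lemma applied over the maximal ideal of $\Z_p[[\GG_2^1]]$ once the Euler characteristic and the ranks on the $E_\infty$ page match. The hypothesis $p>3$ enters essentially in this last step, since only then is the Sylow structure of $\GG_2^1$ compatible in a clean way with $F_{2(p^2-1)}$; the case $p=3$ of Section~\ref{n=2-p>3} requires genuine extra work precisely because this comparison fails at its face value.
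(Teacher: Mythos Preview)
The paper does not give a full proof (the result is cited from \cite{HennRes}), but the intended route is sketched in Section~5.5.1 and Section~6.2: one first computes $H^*(S_2^1,\Z/p)$ by observing that $S_2^1$ is a torsion-free Poincar\'e duality group of dimension $3$ and reading off the Betti numbers $1,2,2,1$ from the calculation of $H_1$; then one builds a minimal free resolution over $\Z_p[[S_2^1]]$ step by step with ranks forced by those numbers, and finally promotes it to $\GG_2^1$. Your proposal invokes the same ingredients but in the opposite order, and that inversion leaves a real gap.

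The gap is the middle differential $\partial_2:C_2\to C_1$. You construct $\varepsilon$ and $\partial_1$, then dualize $C_1\to C_0\to\Z_p$ to obtain $C_3\to C_2$; but dualizing a partial resolution at one end produces a partial resolution at the \emph{other} end, not a connecting map. Concretely, writing $N=\ker\partial_1$ and $\Lambda=\Z_p[[\GG_2^1]]$, the vanishing $\Ext^i_{\Lambda}(\Z_p,\Lambda)=0$ for $i<3$ yields an exact sequence $0\to C_0^{\vee}\to C_1^{\vee}\to N^{\vee}\to 0$, and to splice this onto $0\to N\to C_1\to C_0\to\Z_p\to 0$ you would need an isomorphism $N\cong N^{\vee}$, which you neither construct nor mention. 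Your Nakayama/Euler-characteristic check can confirm exactness of a \emph{given} complex, but it cannot supply the missing map. The paper's order sidesteps this: once the Betti numbers are known one simply continues the minimal resolution by minimally covering $N$ and noting that the resulting kernel is projective since $\mathrm{cd}_p=3$; the self-dual pattern is then a consequence of the symmetry of $H^*$ rather than something imposed in advance. Two smaller slips: $\GG_2^1$ is \emph{not} torsion-free (it contains $F_{2(p^2-1)}$; it is the $p$-Sylow $S_2^1$ that is torsion-free and to which Lazard's theorem applies), and $S_2^1$ is a $3$-dimensional $p$-adic Lie group, not topologically cyclic, so ``a topological generator of a $p$-Sylow'' does not parse.
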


\begin{thm}\label{K2res}\cite{HennRes} There exists 
a fibration 
$$
L_{K(2)}S^0\to E_2^{h\GG_2^1}\longrightarrow 
E_2^{h\GG_2^1}
$$
and an $E_2$-resolution
$$
*\to E_2^{h\GG_2^1}\to X_0\to X_1\to X_2\to X_3\to *
$$ 
with $X_0=X_3=E_2^{hF_{2(p^2-1)}}$ and $X_1=X_2=\Sigma^{2(p-1)}E_2^{hF_{2(p^2-1)}}\vee 
\Sigma^{2(1-p)}E_2^{hF_{2(p^2-1)}}$.  
\end{thm}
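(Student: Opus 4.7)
The plan is to derive both statements from the decomposition $\GG_2 \cong \GG_2^1 \times \Z_p$ together with the algebraic resolution of Theorem \ref{G2res}.

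First I would produce the fibration. The splitting of $\GG_2$ yields
$L_{K(2)}S^0 \simeq E_2^{h\GG_2} \simeq (E_2^{h\GG_2^1})^{h\Z_p}$,
and for any $K(2)$-local spectrum $Y$ with continuous $\Z_p$-action the homotopy fixed point spectrum sits in a fiber sequence $Y^{h\Z_p} \to Y \xrightarrow{\psi - id} Y$, where $\psi$ is a topological generator of $\Z_p$. Applied to $Y = E_2^{h\GG_2^1}$ this gives the asserted fibration, in direct analogy with the $n=1$ case recalled in \eqref{LK1odd}.

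Next I would lift the algebraic resolution of Theorem \ref{G2res} to an $E_2$-resolution of $E_2^{h\GG_2^1}$. The isomorphism \eqref{realisation} dictates what the $E_2$-homology of each $X_s$ must be. For a closed subgroup $H$ of $\GG_2^1$ one has
$(E_2)_* E_2^{hH} \cong \Hom_{cts}(\Z_p[[\GG_2^1/H]], (E_2)_*)$,
so the terms $C_0$ and $C_3$ force $X_0 = X_3 = E_2^{hF_{2(p^2-1)}}$. For $X_1$ and $X_2$ I would analyze the $\Z_p[F_{2(p^2-1)}]$-module $\lambda_{1-p}$ after tensoring with $(E_2)_*$: the cyclic subgroup of order $p^2 - 1$ in $F_{2(p^2-1)}$ acts on $u \in (E_2)_{-2}$ through a primitive character, and the resulting decomposition of $(E_2)_* \otimes_{\Z_p} \lambda_{1-p}$ into two eigensummands, free over the invariants and sitting in internal degrees $2(p-1)$ and $2(1-p)$, identifies
$X_1 = X_2 = \Sigma^{2(p-1)} E_2^{hF_{2(p^2-1)}} \vee \Sigma^{2(1-p)} E_2^{hF_{2(p^2-1)}}$, exactly as claimed.

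The remaining step, which I expect to be the main obstacle, is to build an actual tower of $K(2)$-local spectra whose $E_2$-homology realizes the differentials of $P_\bullet$ under \eqref{realisation}. I would proceed inductively on $s$, lifting each algebraic differential $C_s \to C_{s+1}$ to a map $X_s \to X_{s+1}$ in the $K(2)$-local category. The existence and essential uniqueness of such lifts are controlled by $\Ext$-type obstructions computed in the category of continuous $(E_2)_*[[\GG_2^1]]$-modules, i.e. by the same type of input as the descent spectral sequence \eqref{descentSS}; the fact that each $X_s$ is assembled from fixed-point spectra $E_2^{hH}$ with $H$ finite, and that $P_\bullet$ has length only $3$, keeps these obstruction groups manageable. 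Once the tower is constructed, \eqref{realisation} holds by design, and the algebraic exactness of $P_\bullet$ translates directly into $E_2$-exactness of $I_\bullet$, yielding the desired resolution.
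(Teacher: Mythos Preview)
The paper itself does not give a proof of Theorem~\ref{K2res}; it is stated as a result quoted from \cite{HennRes}, immediately following the algebraic resolution of Theorem~\ref{G2res}. The only indication of method in the present paper is the general remark after Theorem~\ref{En-res}, namely that the topological resolution is obtained by realizing the algebraic projective resolution in the sense of \eqref{realisation}, together with the analogy with the $n=1$ fibration \eqref{LK1odd}.

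Your outline is entirely consistent with that indicated approach: the fibration comes from the central $\Z_p$-factor exactly as you say, and the $E_2$-resolution is the topological realization of the complex in Theorem~\ref{G2res}, with the suspensions $\Sigma^{2(p-1)}$ and $\Sigma^{2(1-p)}$ arising from the character $\lambda_{1-p}$ just as the suspensions in Theorems~\ref{GHMR} and~\ref{HennRes} arise from the characters appearing there (see the remarks following those theorems). So your plan matches what the paper suggests; there is simply no detailed proof here to compare further against.
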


\subsection{The example $n=1$ and $p=2$} 
\medskip

This case is again well understood. 
The isomorphism  $\GG_1=\Z_2^{\times}\cong C_2\times\Z_2$ allows, 
as before, to form the 
homotopy fixed points in two stages and we obtain the following fibration  
\begin{equation}\label{K1resp=2}
I_{\bullet}: L_{K(1)}S^0\to K\Z_2^{hC_{2}} 
\buildrel{\psi^{3}-id}\over\longrightarrow K\Z_2^{hC_{2}} \ . 
\end{equation}

The homotopy fixed points $K\Z_2^{hC_2}$ can be identified with $2$-adic 
real $K$-theory $KO\Z_2$. 
Note that this is not an example of Theorem \ref{En-res}, in fact a finite length 
$E_n$-resolution as in Theorem \ref{En-res} cannot exist in this case 
because $\GG_1$ contains an element of order $2$, and hence $H^*_{cts}(\GG_1,\FF_2)$ 
is nontrivial in arbitrarliy high cohomological degrees. 
Nevertheless this is a very useful substitute. We will get back to this in section \ref{p=2}.

\subsection{The general case $p-1$ divides $n$} 
\medskip

The natural question arises whether there are generalizations 
of the fibre sequence (\ref{K1resp=2}) for higher $n$ and $p$ such that $p-1$ divides $n$. 
What could they look like? 
In other words, can we explain the appearance of $K\Z_2^{hC_2}$ in (\ref{K1resp=2}) 
so that it fits into a more general framework? 

A good point of view is provided by {\it group cohomology} as follows:

Applying the functor ${K\Z_2}_*$ to (\ref{K1resp=2}) gives a short exact sequence 
$$
0\to {K\Z_2}_*\to {K\Z_2}_*(K\Z_2^{hC_2})\to {K\Z_2}_*(K\Z_2^{hC_2})\to 0 
$$ 
in which  ${K\Z_2}_*(K\Z_2^{hC2})$ can be identified with the group of continuous homomorphisms 
from the permutation module $\Z_2[[\Z_2^{\times}/C_2]]$ to $(K\Z_2)_*$. 
The fibre sequence (\ref{K1resp=2}) can therefore be considered as a homotopy theoretic realization  
of the exact sequence of profinite $\Z_2[[\Z_2^{\times}]]$-modules (cf. section \ref{n=1})
\begin{equation}\label{Z2-res}
P_{\bullet}: 0\to\Z_2[[\Z_2^{\times}/C_2]]\to \Z_2[[\Z_2^{\times}/C_2]]\to \Z_2\to 0 \ . 
\end{equation}
in the sense that ${K\Z_2}_*(I_{\bullet})\cong \Hom_{cts}(P_{\bullet},{K\Z_2}_*)$ where 
$I_{\bullet}$ is the fibration of (\ref{K1resp=2}) and $P_{\bullet}$ the exact sequence of 
(\ref{Z2-res}). However, in this case $I_{\bullet}$ is not a $K\Z_2$-resolution 
in the sense of section \ref{p-1not} and $P_{\bullet}$ is not  a free (neither a projective) resolution 
but rather a  resolution by permutation modules. 
\medskip

This suggests that we should look for 
a resolution of the trivial $\GG_n$-module $\Z_p$ in terms of permutation 
modules $\Z_p[[\GG_n/F]]$ with $F$ running through finite subgroups 
(or summands thereof) and try to realize those in the sense of (\ref{realisation}). 
In fact, if $F$ is any finite subgroup of $\GG_n$ 
there is a canonical isomorphism 
\begin{equation}\label{EnEnhF}
(E_n)_*E_n^{hF}\cong \Hom_{cts}(\Z_p[[\GG_n/F]],{E_n}_*)\ .
\end{equation}
\smallskip
This leads to the following questions? 
\smallskip 

\noindent
\underbar{Questions:} 
1) Are there resolutions of finite length and finite type of the trivial 
$\Z_p[[\GG_n]]$-module $\Z_p$ by (direct summands of) 
permutation modules of the form $\Z_p[[\GG_n/F]]$ for finite subgroups  $F\subset\GG_n$? 

\noindent
2) Can these resolutions be realized by resolutions
of spectra where the resolving spectra 
are the corresponding homotopy fixed point spectra 
with respect to these finite subgroups?

\noindent 
3) If the answers to (a) and (b) are yes, how unique are these resolutions?  

Here we call a sequence of spectra
$$
*\to X=X_{-1}\to X_0\to X_1\to \ldots 
$$ 
a {\it resolution} of $X$ if the composite of any two consecutive maps is nullhomotopic and if each of the 
maps $X_i\to X_{i+1}$, $i\geq 0$, can be factored as $X_i\to C_i\to X_{i+1}$ such that
$C_{i-1}\to X_i\to C_i$ is a cofibration for every $i\geq 0$ (with $C_{-1}:=X_{-1}$). 
We say that the resolution is of length n if $C_n\simeq X_n$ and $X_i\simeq *$ if $i>n$.

\noindent
\underbar{Remark} The group $\SS_n$ is of finite virtual mod-$p$ cohomological 
dimension ($vcd_p$) equal to $n^2$, i.e. there is a finite index subgroup whose 
continuous mod-$p$ cohomology vanishes in degrees $>n^2$. In the 
case of a discrete group $G$ of finite $vcd_p$ there is a geometric source for 
resolutions of the trivial module $\Z_p$ 
by permutation modules of the form $\Z_p[G/F]$ with $F$ runing through finite 
subgroups. In fact, they can be obtained as the cellular chains of a contractible finite dimensional 
$G$-$CW$-complex on which $G$ acts with finite stabilizers. 
Such spaces always exist (if $vcd_p(G)<\infty$) and 
hence such resolutions always exist.  In our case such spaces are not known to exist 
and we have to manufactor our resolutions by hand.

\subsection{The example $n=2$ and $p=3$}\label{n=2-p=3}
This is the first new case.

\begin{thm}\label{algduality}\cite{GHMR} 
There is an exact complex of $\Z_3[[\GG_2^1]]$-modules 
$$ 
0\to C_3\to C_2\to C_1\to C_1\to \Z_3\to 0  
$$ 
with 
$C_0=C_3=\Z_3[[\GG_2/G_{24}]]$ and $C_1=C_2=\Z_3[[\GG_2^1]]\otimes_{\Z_3[SD_{16}]}\chi$ 
where $SD_{16}$ is a maximal finite subgroup of $\GG_2$ which is isomorphic to the semidihedral 
group of order $16$, $\chi$ is a suitable character $\chi$ of $SD_{16}$ defined over $\Z_3$, 
and $G_{24}$ is another maximal finite subgroup of order $24$ of $\GG_2$.  
\end{thm}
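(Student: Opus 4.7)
The plan is to construct the resolution in two halves and then splice them using Poincaré duality for the compact $3$-adic analytic group $\GG_2^1$, which at $p=3$ has dimension $3$.

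First I would build a partial resolution
$$
C_1 \to C_0 \to \Z_3 \to 0,
$$
with $C_0 = \Z_3[[\GG_2/G_{24}]]$ augmented in the usual way and $C_1 = \Z_3[[\GG_2^1]]\otimes_{\Z_3[SD_{16}]}\chi$. By Frobenius reciprocity, a $\GG_2^1$-equivariant map $C_1 \to C_0$ is the datum of an $SD_{16}$-equivariant map $\chi \to \ker(\mathrm{aug})$, i.e.\ an element of the augmentation ideal on which $SD_{16}$ acts through the character $\chi$. Producing such an element uses the explicit description of $\SS_2$ as the units in the maximal order of the quaternion algebra over $\QQ_3$ of invariant $1/2$, together with the knowledge of how the two maximal finite subgroups $G_{24}$ and $SD_{16}$ intersect (in a common $2$-Sylow). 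The character $\chi$ is dictated by matching low-degree invariants against $H^*(\GG_2^1,\FF_3)$, which is accessible from the group-theoretic properties developed in Section~4 and a Serre-type spectral sequence for $\SS_2^1 \to \GG_2^1 \to \Gal(\FF_9:\FF_3)$.

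Next I would close the complex using Poincaré duality. A torsion-free finite-index subgroup of $\GG_2^1$ is a $\Z_3$-Poincaré duality group of dimension $3$ with trivial dualizing module, so $\Ext^*_{\Z_3[[\GG_2^1]]}(\Z_3,\Z_3[[\GG_2^1]])$ is concentrated in degree $3$ and equals $\Z_3$. The induced modules $\Z_3[[\GG_2/G_{24}]]$ and $C_1$ are essentially self-dual under $\Hom_{\Z_3[[\GG_2^1]]}(-,\Z_3[[\GG_2^1]])$ (the second up to a sign character that is absorbed in the choice of $\chi$), so $\Hom_{\Z_3[[\GG_2^1]]}(-,\Z_3[[\GG_2^1]])$ applied to the first half of the resolution produces a complex of the predicted shape on the right, forcing $C_2 = C_1$ and $C_3 = C_0$.

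The main obstacle is the middle map $d_2 \colon C_2 \to C_1$ gluing the two halves, together with verification of exactness across the splice. One must exhibit a pair of Shapiro generators whose images in an appropriate low-degree $\Ext$-term are compatible with the duality pairing; the obstruction lives in an $\Ext^1$ that can be pinned down using the explicit action of $SD_{16}$ and $G_{24}$ on the finite coset set $\GG_2/G_{24}$. Showing that this obstruction vanishes and selecting the correct representative is the delicate computation, and once it is done, exactness of the full complex follows from the first-half exactness and the duality identification of the second half.
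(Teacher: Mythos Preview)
The paper does not give its own proof of this theorem: it is quoted from \cite{GHMR} and only stated here, as befits a survey. So there is no in-paper argument to compare your proposal against. That said, a few remarks on your outline are in order.

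Your duality heuristic is morally the right explanation for the self-dual shape of the complex (and indeed the label \texttt{algduality} reflects this), but as a proof mechanism it has a gap. The module $C_0=\Z_3[[\GG_2/G_{24}]]$ is \emph{not} projective over $\Z_3[[\GG_2^1]]$, because $|G_{24}|=24$ is divisible by $3$; only $C_1=C_2$ is projective (since $|SD_{16}|=16$ is prime to $3$). Hence applying $\Hom_{\Z_3[[\GG_2^1]]}(-,\Z_3[[\GG_2^1]])$ to a partial resolution $C_1\to C_0\to\Z_3\to 0$ does not automatically produce an exact complex, nor does it identify $C_3$ with $C_0$ for free: you must separately control the higher $\Ext^i_{\Z_3[[\GG_2^1]]}(C_0,\Z_3[[\GG_2^1]])$, which by Shapiro reduces to $H^*(G_{24},\Z_3[[\GG_2^1]])$ and is not concentrated in a single degree. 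The self-duality you want is a consequence of the construction, not a shortcut to it.

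In \cite{GHMR} the argument is considerably more hands-on: one first writes down the maps explicitly (using detailed knowledge of the finite subgroups and of the filtration on $S_2^1$ described in Section~4), and then verifies exactness by passing to the associated graded for the $S$-adic filtration and checking at the level of $\FF_3[[S_2^1/F_i]]$-modules. The duality pattern is observed a posteriori. Your step of producing $d_1$ via Frobenius reciprocity is correct in spirit, but the ``main obstacle'' you identify (the middle map and exactness at $C_1$ and $C_2$) is in fact the bulk of the work, and it is done computationally rather than by an abstract pairing argument.
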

\smallskip 

\begin{thm}\cite{GHMR}\label{GHMR}  There exists 
a fibration 
$$
L_{K(2)}S^0\to E_2^{h\GG_2^1}\longrightarrow 
E_2^{h\GG_2^1}
$$
and a resolution of $E_2^{h\GG_2^1}$  of length $3$ 
$$
*\to E_2^{h\GG_2^1}\to X_0\to X_1\to X_2\to X_3\to * 
$$
with $X_0=E_2^{hG_{24}}$, $X_1=\Sigma^8E_2^{hSD_{16}}\simeq X_2=\Sigma^{40}E_2^{hSD_{16}}$ 
and $X_3=\Sigma^{48}E_2^{hG_{24}}$. 
\end{thm}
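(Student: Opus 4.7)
The plan is to establish the fibration first and then to realize the algebraic resolution of Theorem \ref{algduality} as a resolution of $K(2)$-local spectra.

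For the fibration, I use the splitting $\GG_2 \cong \GG_2^1 \times \Z_3$ recalled in section \ref{n=2-p>3}. Since $\Z_3$ is a free pro-$3$ group of rank one, a topological generator $\psi$ yields the short exact sequence of continuous $\Z_3[[\Z_3]]$-modules
$$0 \to \Z_3[[\Z_3]] \xrightarrow{\psi-1} \Z_3[[\Z_3]] \to \Z_3 \to 0,$$
and applying the Devinatz--Hopkins $\Z_3$-homotopy fixed point construction to $E_2^{h\GG_2^1}$ turns this into the desired fibration
$$L_{K(2)}S^0 \simeq E_2^{h\GG_2} \simeq (E_2^{h\GG_2^1})^{h\Z_3} \to E_2^{h\GG_2^1} \xrightarrow{\psi-1} E_2^{h\GG_2^1}.$$

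For the resolution I would realize the algebraic complex of Theorem \ref{algduality} term by term. By (\ref{EnEnhF}), the spectrum $E_2^{hG_{24}}$ realizes $\Hom_{cts}(\Z_3[[\GG_2/G_{24}]],(E_2)_*)$ on Morava modules, and more generally any summand of $\Z_3[[\GG_2/F]]$ cut out by an idempotent in $\Z_3[F]$ is realized by the corresponding summand of $E_2^{hF}$; the character $\chi$ of $SD_{16}$ furnishes such an idempotent, producing the summand of $E_2^{hSD_{16}}$ that realizes $C_1 = C_2$. The suspensions $\Sigma^8$, $\Sigma^{40}$ and $\Sigma^{48}$ arise from matching the algebraic twists (in particular $\chi$ and the orientation character at the top of the duality $C_3 \cong C_0$) against the internal periodicities of $E_2^{hSD_{16}}$ and $E_2^{hG_{24}}$; careful bookkeeping at the level of Morava modules pins these down. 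With the candidate $X_i$ in hand, I would construct the maps $X_{i-1}\to X_i$ inductively, starting from the natural map $E_2^{h\GG_2^1}\to E_2^{hG_{24}}$ induced by $G_{24}\subset\GG_2^1$, forming the cofibre at each stage and lifting the next algebraic map to a map of spectra. The obstructions to such lifts live in Adams--Novikov/descent $E_2$-terms of the form $\Ext^{\geq 1}_{\Z_3[[\GG_2^1]]}((E_2)_*X_i,(E_2)_*X_{i+1})$, which vanish in the relevant range because of the cohomological properties of $\GG_2^1$ at $p=3$ established in the earlier sections.

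The hardest step will be closing up the top of the resolution: one has to show that the final cofibre of the tower built by the inductive procedure is actually equivalent to $\Sigma^{48}E_2^{hG_{24}}$, not merely a spectrum with the correct Morava module. This is where the self-duality built into Theorem \ref{algduality} becomes essential, since it forces the Morava module of that top cofibre to match that of the candidate; a last obstruction analysis, via the descent spectral sequence for the mapping spectrum from the cofibre to $\Sigma^{48}E_2^{hG_{24}}$, then promotes this isomorphism on $(E_2)_*$-homology to a $K(2)$-local equivalence. Uniqueness questions of the sort raised in Question 3 of section \ref{p-1not} I would leave to a separate analysis.
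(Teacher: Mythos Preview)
The paper does not actually prove this theorem; it is a survey result cited from \cite{GHMR}, so there is no proof in the paper to compare against directly. Your sketch is broadly in the spirit of the argument in \cite{GHMR}, but it contains a real gap.

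The most serious problem is your treatment of the suspension $\Sigma^{48}$ on $X_3$. You claim it ``arises from matching the algebraic twists \ldots\ against the internal periodicities'' and that ``careful bookkeeping at the level of Morava modules pins these down.'' This is precisely what is \emph{not} true, and the paper says so explicitly in the remark immediately following the theorem: the $(E_2)_*$-homology of $E_2^{hG_{24}}$ is $24$-periodic, while the spectrum itself is only $72$-periodic, and ``the $48$-fold suspension appearing with $X_3$ is a homotopy theoretic subtlety which is not explained by the algebra.'' At the level of Morava modules the three spectra $E_2^{hG_{24}}$, $\Sigma^{24}E_2^{hG_{24}}$, and $\Sigma^{48}E_2^{hG_{24}}$ are indistinguishable, so no amount of Morava-module bookkeeping or appeal to the self-duality of Theorem~\ref{algduality} can select the correct one. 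Identifying the top of the tower requires genuinely homotopy-theoretic input (in \cite{GHMR} this comes down to a delicate Toda-bracket/obstruction computation), and your last paragraph glosses over exactly the hard part.

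A secondary issue is your assertion that the lifting obstructions ``vanish in the relevant range because of the cohomological properties of $\GG_2^1$ at $p=3$.'' Since $\GG_2^1$ contains $3$-torsion (indeed $G_{24}$ does), its mod-$3$ cohomology is nonzero in arbitrarily high degrees (cf.\ section~\ref{coh-n=2-p>3}), so there is no blanket vanishing statement available. The actual obstruction analysis in \cite{GHMR} is more delicate and uses specific information about the spectra $E_2^{hF}$ and the structure of the maps, not a general finite-cohomological-dimension argument.
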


\noindent
\underbar{Remarks} a) The homotopy fixed point spectrum $E_2^{hSD_{16}}$ is $16$-periodic and the 
suspensions $\Sigma^8E_2^{hSD_{16}}$ and $\Sigma^{40}E_2^{hSD_{16}}$ 
are due to the presence of the character $\chi$ in the previous theorem. The $(E_2)_*$-homology of 
$E_2^{hG_{24}}$ is $24$-periodic and this resolution realizes the one of the previous theorem 
in the same sense as before, i.e. there is an isomorphism of complexes 
$(E_2)_*(X_{\bullet})\cong \Hom_{cts}(C_{\bullet},(E_2)_*)$. However, the spectrum $E_2^{hG_{24}}$ 
itself is only $72$-periodic and the $48$-fold suspension appearing with $X_3$ is a homotopy theoretic 
subtlety which is not explained by the algebra. 

b) The spectrum $E_2^{hG_{24}}$ is a version of the Hopkins-Miller 
higher real $K$-theory spectrum $EO_2$. It is equivalent to $L_{K(2)}tmf$, the 
$K(2)$-localization of the spectrum $tmf$ of topological modular forms at $p=3$.

There is a second resolution which can be described as follows: 
we choose an $8$-th primitive roof of unity in $\W_{\FF_9}$. This defines a one-dimensional faithful  
representation of $C_{8}$ over $\W_{\FF_9}$ which we denote it by $\l_1$, and its $k$-th 
tensor power by $\l_k$. Then the $\l_k$ are naturally $\Z_3[SD_{16}]$-modules 
and $\l_4$ splits as $\l_{4,+}\oplus \l_{4,-}$. 
Furthermore $\l_{4,-}$ is the representation $\chi$ of \ref{algduality}. 
\medskip

The following results are implicit in \cite{GHM}. 

\begin{thm}\cite{HennRes}\label{CentHenn} 
There is an exact complex of $\Z_3[[\GG_2^1]]$-modules
\begin{align*}
0\to \Z_3[[\GG_2^1/SD_{16}]] & 
\to \Z_3[[\GG_2^1]]\otimes_{\Z_3[SD_{16}]}\l_2\to \\ 
(\Z_3[[\GG_2^1]] \otimes_{\Z_3[G_{24}]}&\widetilde{\chi})\oplus 
(\Z_3[[\GG_2^1]]\otimes_{\Z_3[SD_{16}]}\l_{4,-})  
\to \Z_3[[\GG_2^1/G_{24}]]\to \Z_3\to 0   
\end{align*}
where $\widetilde{\chi}$ is a suitable nontrivial one-dimensional character of $G_{24}$ 
defined over $\Z_3$.  
\end{thm}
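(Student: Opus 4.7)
The plan is to construct the complex by hand from the structure of the maximal finite subgroups of $\GG_2^1$ at $p=3$, and to verify its exactness using the mod-$3$ cohomology of $\GG_2^1$ computed in \cite{GHM}. The two maximal finite subgroups in play are the binary tetrahedral group $G_{24}$ (which carries the $3$-torsion of $\GG_2^1$) and the semidihedral group $SD_{16}$ (the normalizer of the $C_8$ subgroup); they meet in a quaternion group $Q_8$, and together they topologically generate $\GG_2^1$.

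I would proceed by downward induction from the augmentation $\varepsilon : \Z_3[[\GG_2^1/G_{24}]] \to \Z_3$. At each stage, having constructed a differential $d_k$, I would examine its kernel $I_k$ and realize $I_k$ as a quotient of a module of the form $\Z_3[[\GG_2^1]]\otimes_{\Z_3[F]} V$ for a finite subgroup $F\subset\GG_2^1$ and a $\Z_3[F]$-module $V$ determined by the equivariance constraints on the generators of $I_k$. Concretely, the kernel of $\varepsilon$ is generated by two classes of elements: those coming from $G_{24}$ (the equivariance forcing a twist by $\widetilde\chi$) and those coming from cosets indexed by $SD_{16}$-elements outside $Q_8$ (forcing a twist by $\l_{4,-}$), which accounts for the direct-sum structure of the middle term. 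Repeating, the next kernel should be generated by a single $SD_{16}$-equivariant class twisting by $\l_2$, and the final kernel turns out to be the untwisted permutation module $\Z_3[[\GG_2^1/SD_{16}]]$.

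The main obstacle will be verifying exactness at the two middle positions, where the direct-sum decomposition enters and the interaction between $G_{24}$ and $SD_{16}$ via their common subgroup $Q_8$ must be handled with care. My approach is to reduce modulo $3$ and check that the alternating sum of the modules in the Grothendieck group of $\F_3[[\GG_2^1]]$-modules equals $[\F_3]$; combined with the explicit computation of $H^*(\GG_2^1,\F_3)$ from \cite{GHM} and a comparison with the duality resolution of Theorem~\ref{algduality} (another length-$3$ resolution of $\Z_3$ starting and ending with the same type of modules), this pins down exactness with $\F_3$-coefficients. Lifting to $\Z_3$-coefficients is then automatic by a Nakayama-type argument, since all modules in the complex are $\Z_3$-free.
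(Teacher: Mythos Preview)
The paper does not contain a proof of this theorem. It is stated as a result from \cite{HennRes}, with the remark that it is ``implicit in \cite{GHM}'', and no argument is given in the present notes. So there is no ``paper's own proof'' against which to compare your proposal.

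That said, a few comments on your sketch. The broad strategy---build the complex by hand starting from the augmentation and identify each successive kernel as an induced module---is exactly the method used in \cite{HennRes}, and the appeal to the known mod-$3$ cohomology of $\GG_2^1$ as a consistency check is also standard. However, your exactness argument as written is not sufficient: the vanishing of an alternating sum in a Grothendieck group does not by itself force a complex to be exact (a non-exact complex with cancelling homology would pass that test). What one actually does in \cite{HennRes} is closer to what you hint at but don't quite say: one shows that after applying $\Hom_{\Z_3[[\GG_2^1]]}(-,\F_3)$ the resulting cochain complex has cohomology matching $H^*(\GG_2^1,\F_3)$ in the right degrees, and uses this together with minimality/projectivity considerations (or a direct comparison with the duality resolution of Theorem~\ref{algduality}) to deduce that the original complex is acyclic. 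Also, your description of the generators of $\ker\varepsilon$ is heuristic at best; the actual identification of which representations $\widetilde\chi$, $\l_{4,-}$, $\l_2$ appear requires a careful analysis of the $\F_9^\times$- and Galois-weights on the low-dimensional cohomology, not just ``equivariance constraints''.
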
 

\begin{thm}\cite{HennRes}\label{HennRes}  There exists a resolution of $E_2^{h\GG_2^1}$ of length $3$  
\begin{align*}
*\to E_2^{h\GG_2^1}\to E_2^{hG_{24}}\to & 
\Sigma^{36}E_2^{hG_{24}}\vee\Sigma^{8}E_2^{hSD_{16}}\to \\
\to &  \Sigma^{4}E_2^{hSD_{16}}\vee\Sigma^{12}E_2^{hSD_{16}}\to 
E_2^{hSD_{16}}\to * \ . 
\end{align*}
\end{thm}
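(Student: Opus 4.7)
Following the blueprint of Theorems \ref{K2res} and \ref{GHMR}, the strategy is to realise the algebraic resolution of Theorem \ref{CentHenn} term-by-term as a tower of spectra whose $(E_2)_*$-homology gives back Theorem \ref{CentHenn}. The dictionary between algebra and topology is provided by (\ref{EnEnhF}) together with its twisted variants, and the lifting of algebraic maps to maps of spectra is carried out via the descent/Adams--Novikov spectral sequence for $K(2)$-local mapping spectra between homotopy fixed-point spectra $E_2^{hF}$ and their suspensions.

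The first step is to match each module of the resolution of Theorem \ref{CentHenn} with a spectrum. The permutation modules $\Z_3[[\GG_2^1/G_{24}]]$ and $\Z_3[[\GG_2^1/SD_{16}]]$ are realised by $E_2^{hG_{24}}$ and $E_2^{hSD_{16}}$ via (\ref{EnEnhF}). For a twisted induced module $\Z_3[[\GG_2^1]]\otimes_{\Z_3[F]}\xi$ the realisation is a suspension, or wedge of suspensions, $\bigvee_i \Sigma^{2k_i}E_2^{hF}$, where the integers $k_i$ are determined by matching $\xi$ against the $F$-action on the classes $u^{-k_i}\in (E_2)_{2k_i}$ after extending scalars to $\W_{\FF_9}$. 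Carrying this out for $\widetilde\chi$, $\l_2$ and $\l_{4,-}$ produces respectively $\Sigma^{36}E_2^{hG_{24}}$, the wedge $\Sigma^{4}E_2^{hSD_{16}}\vee\Sigma^{12}E_2^{hSD_{16}}$ (the wedge arising because $\l_2$, viewed as a $\Z_3[SD_{16}]$-module, is $2$-dimensional over $\Z_3$ and splits as a sum of Galois-conjugate characters over $\W_{\FF_9}$), and $\Sigma^{8}E_2^{hSD_{16}}$.

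The second step is the inductive construction of the differentials of the topological resolution. The first map $E_2^{h\GG_2^1}\to E_2^{hG_{24}}$ is the canonical one induced by $G_{24}\subset\GG_2^1$. Given a partial tower realising the resolution through $X_i$, the algebraic differential $C_i\to C_{i+1}$ of Theorem \ref{CentHenn} is, by the dictionary above, an element of the $0$-line of the descent spectral sequence for $[X_i,X_{i+1}]_*$, and one lifts it to a genuine map of spectra by checking that all potentially obstructing $d_r$-differentials vanish on it. Exactness of the resulting tower together with nullhomotopy of consecutive composites then follow by comparing descent spectral sequences with the exactness of the algebraic resolution.

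The main obstacle is this obstruction-theoretic step. As noted in the remark after Theorem \ref{GHMR}, the spectrum $E_2^{hG_{24}}$ is only $72$-periodic and $E_2^{hSD_{16}}$ only $16$-periodic, whereas their $(E_2)_*$-homologies are only $24$- and $8$-periodic; hence the precise topological suspension realising a given algebraic twist is pinned down by (\ref{EnEnhF}) only modulo these larger periodicities. Showing that the specific shifts $\Sigma^{4},\Sigma^{8},\Sigma^{12},\Sigma^{36}$ are realisable consistently requires direct computation in the descent spectral sequence and is the analogue of the $48$-fold suspension subtlety already visible in Theorem \ref{GHMR}; this is where the algebraic input of Theorem \ref{CentHenn} has to be supplemented by a genuinely topological analysis of $H^*_{cts}(\GG_2^1,(E_2)_*)$.
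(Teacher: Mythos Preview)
The paper does not actually prove this theorem: it is a survey, and Theorem~\ref{HennRes} is simply quoted from \cite{HennRes} with no argument given beyond the one-line remark that ``the suspensions are due to the presence of the characters in the previous theorem.'' So there is no proof in the paper to compare against, only that hint.

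Your outline is precisely the strategy one expects and is consistent with that remark: realise each module of Theorem~\ref{CentHenn} by the appropriate (suspended) homotopy fixed-point spectrum via the dictionary (\ref{EnEnhF}) and its twisted analogues, then lift the algebraic differentials through the descent spectral sequence. Your identification of the suspensions ($\widetilde\chi\leadsto\Sigma^{36}E_2^{hG_{24}}$, $\lambda_{4,-}\leadsto\Sigma^{8}E_2^{hSD_{16}}$, $\lambda_2\leadsto\Sigma^{4}E_2^{hSD_{16}}\vee\Sigma^{12}E_2^{hSD_{16}}$) is correct and is exactly what the paper's remark is pointing at. You are also right to flag the obstruction-theoretic step and the periodicity mismatch as the genuine work; this is where the actual proof in \cite{HennRes} lives, and it is not reproduced in the present paper. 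As a plan your proposal is sound; it is not a proof, but neither does the paper offer one.
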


\noindent
\underbar{Remark} As in Theorem \ref{GHMR} the suspensions are due 
to the presence of the characters in the previous theorem.

\subsection{Permutation resolutions  and realizations}{\ }
\medskip

\begin{prop}\cite{HennRes}  Let $p$ be an odd prime and $n=k(p-1)$ with 
$k\not\equiv 0\ \text{mod}\ p$. 
Then the trivial $\Z_p[[\GG_n]]$-module $\Z_p$ 
admits a resolution of finite length in which all modules are 
finite direct sums of modules which are of the form $\Z_p[[\GG_n/F]]$ 
with $F$ a finite subgroup of $\GG_n$.  
\end{prop}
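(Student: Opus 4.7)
My plan is to build the required resolution in three stages: a free resolution over a suitable torsion-free open normal subgroup of $\GG_n$ supplied by Lazard's theory; a permutation resolution of $\Z_p$ over the finite quotient supplied by a classical theorem of Webb on finite groups; and an inductive conversion step that replaces open-subgroup permutation modules by finite-subgroup ones.

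Since $n=k(p-1)$ with $p\nmid k$, in particular $p\nmid n$. Lazard's theorem then supplies an open uniformly powerful pro-$p$ subgroup of $\SS_n$, which I would shrink (using that $\GG_n$ is virtually torsion-free with only finitely many conjugacy classes of finite subgroups, and that the Galois action preserves congruence subgroups) to a torsion-free $U\triangleleft\GG_n$ with $\bar G:=\GG_n/U$ finite. Then $\Z_p$ admits a finite length free resolution $F_\bullet\to\Z_p$ over $\Z_p[[U]]$, while by Webb's theorem $\Z_p$ admits a finite length resolution $Q_\bullet\to\Z_p$ over $\Z_p[\bar G]$ whose terms are finite direct sums of summands of permutation modules $\Z_p[\bar G/\bar H]$.

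Inflating $Q_\bullet$ along $\GG_n\twoheadrightarrow \bar G$ makes its terms into $\Z_p[[\GG_n/\tilde H]]$, where $\tilde H\supseteq U$ is the open preimage of $\bar H$; combining with $F_\bullet$ via a double complex coming from the filtration by $U$ produces a finite length $\Z_p[[\GG_n]]$-resolution of $\Z_p$ all of whose terms are permutation modules $\Z_p[[\GG_n/\tilde H]]$ for open subgroups $\tilde H$. The conversion step is then to further resolve each $\Z_p[[\GG_n/\tilde H]]\cong \Z_p[[\GG_n]]\otimes_{\Z_p[[\tilde H]]}\Z_p$ by modules $\Z_p[[\GG_n/F]]$ with $F\subset \tilde H$ finite, and induce up to $\GG_n$. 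This I would carry out by induction on a measure of the $p$-local complexity of $\tilde H$ (say, the order of the image of $\tilde H$ in $\bar G$); at the base case, when $\tilde H$ is $p$-torsion-free, $\Z_p[[\tilde H]]$ is itself the permutation module $\Z_p[[\tilde H/\{1\}]]$ and Lazard provides a finite free resolution. A final total-complex argument collapses everything into a single finite length resolution of $\Z_p$ of the required form.

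The main obstacle is precisely this conversion step. Without a finite-dimensional proper classifying $\GG_n$-space — whose existence, as the remark preceding the proposition warns, is not known in general — termination of the inductive procedure must be extracted algebraically. The hypothesis $n=k(p-1)$ with $p\nmid k$ provides the needed control on the maximal finite $p$-subgroups of $\GG_n$ (their classification up to conjugacy, their ranks, and the structure of their normalizers), so that the intermediate open subgroups $\tilde H$ appearing in the induction are genuinely simpler than $\GG_n$ and the recursion bottoms out in finitely many steps.
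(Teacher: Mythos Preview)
The paper does not prove this proposition; it is cited from \cite{HennRes}, and the only commentary is that Symonds \cite{sy} suggests a general framework while \cite{HennRes} gives a ``more direct'' construction specific to $\GG_n$. The explicit resolutions displayed for $n=2$ (Theorems \ref{G2res}, \ref{algduality}, \ref{CentHenn}) show what that direct approach looks like: modules induced from specific maximal finite subgroups, with exactness checked by hand.

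Your outline has a genuine gap at the conversion step, and the induction on $|\tilde H/U|$ does not run. For any torsion-free open normal $U\triangleleft\GG_n$ the quotient $\bar G=\GG_n/U$ has a nontrivial normal $p$-Sylow, namely the image of $S_n$ (which cannot lie in $U$: under the hypothesis $(p-1)\mid n$ the group $S_n$ contains $p$-torsion while $U$ does not). One pass through your scheme therefore at best replaces $\GG_n$ by open subgroups of the pro-$p$ group $S_n$; from then on every finite quotient is itself a $p$-group, and over a finite $p$-group a finite-length resolution of $\Z_p$ by permutation modules with only \emph{proper} isotropy does not exist --- already for $C_p$ it would amount to a finite free resolution of the trivial module. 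So your complexity measure stalls permanently before reaching the torsion-free base case. Appealing to Webb does not rescue this: whenever $\bar G$ has a nontrivial normal $p$-subgroup, the relevant subgroup complex has a $\bar G$-fixed cone point, and that fixed point is exactly what reintroduces $\tilde H=\GG_n$ into your list.

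What the hypothesis $n=k(p-1)$, $p\nmid k$ actually delivers is that every nontrivial finite $p$-subgroup of $\GG_n$ is cyclic of order $p$, with finitely many conjugacy classes and centralizers of the shape $C_p$ times something strictly smaller (the paper records $C_{S_2^1}\langle a\rangle\cong\langle a\rangle\times\Z_3$ for $n=2$, $p=3$). The construction in \cite{HennRes} exploits this directly, assembling the resolution from modules induced off these normalizers together with a torsion-free piece, rather than via a descending tower of open subgroups. Your final paragraph identifies exactly the right structural input; it needs to be the mechanism of the construction, not a termination certificate for an induction that, as set up, does not terminate.
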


In the case of general profinite groups $G$ work of Symonds \cite{sy} suggests 
that such resolutions exist under suitable finiteness assumptions on $G$. 
In the case of the stabilizer group \cite{HennRes} provides a more direct approach to their construction.   

\medskip

\begin{thm}\cite{HennRes}  For $p$ odd and $n=p-1$ there is a resolution 
of $L_{K(n)}S^0$ of finite length in which all spectra are summands in 
finite wedges of spectra of the form $E_n^{hF}$ and $F$ is a finite 
subgroup of $\GG_n$.  
\end{thm}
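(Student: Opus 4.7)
The plan is to realize topologically the algebraic permutation resolution produced by the preceding Proposition. Since $n=p-1=1\cdot(p-1)$ with $1\not\equiv 0\pmod p$, that Proposition provides an exact complex
\[
0\to M_N\to M_{N-1}\to\cdots\to M_0\to\Z_p\to 0
\]
of profinite $\Z_p[[\GG_n]]$-modules in which each $M_i$ is a finite direct sum of summands of permutation modules $\Z_p[[\GG_n/F_j]]$ for finite subgroups $F_j\subset\GG_n$.

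First, I would realize each $M_i$ by a spectrum $X_i$. The isomorphism (\ref{EnEnhF}) gives $(E_n)_*E_n^{hF}\cong\Hom_{cts}(\Z_p[[\GG_n/F]],(E_n)_*)$, equivariantly for the right $\GG_n$-actions, so an idempotent cutting out a summand of the permutation module corresponds, via a lift through the descent spectral sequence for the endomorphism ring $[E_n^{hF},E_n^{hF}]_0$, to an idempotent self-map of $E_n^{hF}$. I take $X_i$ to be the wedge of the corresponding summands, with suspensions included to account for any one-dimensional characters that appear in the algebraic resolution (as in Theorems \ref{G2res}--\ref{GHMR}).

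Next, I would build the tower inductively, starting with the coaugmentation $L_{K(n)}S^0=E_n^{h\GG_n}\to X_0$ induced by the inclusions $F_j\hookrightarrow\GG_n$. Given the tower through stage $i$ with cofiber $C_i$ satisfying $(E_n)_*C_i\cong\Hom_{cts}(\ker(M_i\to M_{i-1}),(E_n)_*)$, the next map $C_i\to X_{i+1}$ is to be a lift of the algebraic differential $\ker(M_i\to M_{i-1})\to M_{i+1}$ through the $K(n)$-local descent spectral sequence computing $\pi_*\Map(C_i,X_{i+1})$ from continuous cohomology of $\GG_n$. The desired class lives in the $E_2^{0,0}$-term, and at the final stage exactness of the algebraic resolution forces $(E_n)_*C_N=0$, hence $C_N\simeq *$ in the $K(n)$-local category.

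The hard part will be the obstruction theory at each inductive stage: one must show that the algebraic class in $E_2^{0,0}$ is a permanent cycle and lifts through the filtration. Following the model of the $n=2$, $p=3$ case in \cite{GHMR}, one expects to exploit a duality (symmetry between $M_i$ and $M_{N-i}$) in the algebraic permutation resolution together with sparseness of $H^*_{cts}(\GG_n,(E_n)_*)$ to confine the potential obstructions to cohomological degrees where they vanish. Once the maps are constructed, a standard diagram chase identifies the cofibers with the next algebraic kernels, and the construction terminates after $N$ steps, giving a resolution of $L_{K(n)}S^0$ of the stated form.
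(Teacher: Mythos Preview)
The paper does not actually prove this theorem; it is stated with a citation to \cite{HennRes} and no argument is given. So there is no ``paper's own proof'' to compare against here.

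That said, your outline is the correct strategy and matches the approach of the cited reference: start from the algebraic permutation resolution supplied by the preceding Proposition, realize each module by the corresponding homotopy fixed point spectrum via (\ref{EnEnhF}), and then inductively lift the algebraic differentials to maps of spectra using the $K(n)$-local Adams--Novikov/descent spectral sequence, checking at each stage that $(E_n)_*$ of the cofiber is the next algebraic kernel. Two small corrections: the Proposition gives modules which are genuine direct sums of $\Z_p[[\GG_n/F]]$'s, not merely summands thereof, so no idempotent-splitting or character-twisting is needed at the algebraic level (the suspensions you mention only enter, if at all, when comparing with specific periodic forms of the $X_i$); and your invocation of ``duality and sparseness'' as in the $n=2$, $p=3$ case is more of an analogy than an argument---in \cite{HennRes} the vanishing of obstructions for general $n=p-1$ is handled by a more direct analysis of the relevant $\Ext$ groups rather than by the self-dual structure specific to Theorem \ref{algduality}. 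Your proposal is honest in flagging this as the hard part, but as written it does not resolve it.
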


\subsection{Applications and work in progress}\label{progress} 

\smallskip 
The pioneering work of Shimomura and collaborators  on calculating 
the homotopy groups $\pi_*(L_{K(2)}X)$ for $X=S^0$ \cite{sh-w} resp. the Moore spectrum $V(0)$ 
\cite{shi1} at the prime $3$ and of $\pi_*(L_{K(2)}S^0)$ for primes $p>3$ \cite{sh-y} 
have been poorly understood by the community. Therefore an alternative approach 
(using group cohomology in a systematic way) is useful. 
Accomplished respectively ongoing projects include the following: 

\noindent
\subsubsection{} \label{p=3} The exact complex of Theorem \ref{algduality} has been made into an efficient calculational tool in 
the thesis of Nasko Karamanov \cite{kar}. This has lead to calculations at $p=3$ 
of $\pi_*(L_{K(2)}X)$ for $X=V(1)$, the cofibre of the Adams self map of $V(0)$ \cite{GHM},  
as well as for $V(0)$ \cite{HKM}. The results in \cite{HKM} refine Shimomura's results 
of \cite{shi1} and correct some errors. The case of $S^0$ is a joint project with 
Goerss, Karamanov and Mahowald. Details should appear in the near future.

The main result of \cite{GHMR} together with partial information from \cite{HKM} have lead to 
major structural results on the homotopy category of $K(2)$-local spectra at the prime $3$: 
the rational homotopy of $L_{K(2)}S^0$ has been calculated and the chromatic splitting conjecture for 
$n=2$ and $p=3$ has been established in \cite{GHM2}, 
the Picard group of smash-invertible $K(2)$-local spectra has been calculated in \cite{kar1} and 
\cite{Picat3} and the Brown-Comenetz dual of the sphere has been determined in \cite{BCdual}.   

\noindent
\subsubsection{}\label{p>3} 
The exact complex of Theorem \ref{G2res} has been turned into an efficient calculational tool 
in the thesis  of O. Lader \cite{Ld}. Among other things he has recovered Shimomura's calculation of 
$\pi_*L_{K(2)}V(0)$ and Hopkins unpublished calculation of the Picard group $Pic_2$, both 
for primes $p>3$. 

\noindent
\subsubsection{}\label{p=2a} 
Resolutions for $n=p=2$ which resemble those of section \ref{GHMR}
were announced in \cite{HennRes} although the precise form of $X_3$ in the analogue 
of Theorem \ref{GHMR} remained unclear at the time. These resolutions 
have since been constructed in the recent Northwestern theses of 
Agn\`es Beaudry and Irina Bobkova. Beaudry has used this to disprove the chromatic splitting 
conjecture at $n=p=2$ \cite{Be}.  The resolutions can be expected to lead to further progress in 
$K(2)$-local homotopy at the prime $2$ similar to the case of the prime $3$ mentioned in subsection 
\ref{p=3} above.  

\smallskip 

In the remaining sections 4-6 of these notes we will explain some of the algebraic aspects of this story 
in more detail, in particular group theoretical and cohomological properties of $\GG_n$.  
The homotopy theoretic aspects will mostly remain in the background. 
\medskip

\section{The Morava stabilizer groups. First properties}

There are different ways to discuss these groups. They arise in stable 
homotopy theory as automorphism groups of certain $p$-typical formal group laws
$\Gamma_n$ defined over $\FF_p$. For our purposes it seems best to introduce them as follows. 
\medskip

\begin{defn} Let $p$ be a prime and let $\cO_n$ be the non-commutative algebra over 
$\W(\FF_{p^n})$, the ring of Witt vectors for the field $\FF_{p^n}$, 
generated by an element $S$ subject to the relations 
$S^n=p$ and $Sw=w^{\sigma}S$ for each $w\in \W(\FF_{p^n})$ where $w^{\sigma}$ is 
the result of applying the lift of Frobenius on $w$. In other words  
\begin{equation}\label{def-On}
\cO_n=\W(\FF_{p^n})\langle S\rangle/(S^n=p, Sw=w^{\sigma}S)\ . 
\end{equation}
\end{defn}

\noindent
\underbar{Remarks} (on Witt vectors) a) The ring of Witt vectors $\W(\FF_{p^n})$ is a 
$\Z_p$-algebra which is a complete local ring with maximal ideal $(p)$.  
It is an integral domain which is free of rank $n$ as $\Z_p$-module. As the notation suggests $\W$ 
is a functor, say from the category of finite field extensions  of $\FF_p$ to the category of 
integral domains which are unramified $\Z_p$-algebras. 

\noindent
b) Because of functoriality the Frobenius automorphism of $\FF_{p^n}$ lifts to a 
$\Z_p$-algebra automorphism. 

\noindent
c) By Hensel's lemma each root of unity in $\FF_{p^n}^{\times}$ lifts uniquely to a root of unity in 
$\W(\FF_{p^n})$.  

\noindent
d) Each element of $w\in \W(\FF_{p^n})$ can be uniquely written as $\sum_{i\geq 0}w_ip^i$ where 
all $w_i\in \W(\FF_{p^n})$ satisfy $w_i^{p^n}=w_i$. 
(Already for $n=1$ this is a non-trivial statement). 

\noindent
e)  A concrete construction (which, however, does not immediately reveal the functoriality 
of the construction) can be given as follows. Over $\FF_p[X]$ the polynomial $X^{p^n}-X$ 
can be factored as product of irreducible polynomials whose degrees divide $n$. 
For each divisor $d$ of $n$ 
there is at least one factor $p_d$ of degree $d$. Then $\FF_{p^n}\cong \FF_p[X]/(p_n)$ 
and $\W(\FF_{p^n})\cong\Z_p[X]/(\widetilde{p}_n)$ where $\widetilde{p}_n$ is any lift of $p_n$ 
to a polynomial $\widetilde{p}_n\in \Z_p[X]$.  

\noindent
\underbar{Remarks} (on $\cO_n$) a) The left $\W(\FF_{p^n})$-submodule of $\cO_n$ generated by 
$S$ is a two sided ideal with quotient $\cO_n/(S)\cong \FF_{p^n}$ and $\cO_n$ is 
complete with respect to the filtration given by the powers of the ideal $(S)$. 
In fact, $\cO_n$ is a non-commutative complete discrete valuation ring. 
The valuation $v$ is normalized such that $v(p)=1$, i.e. $v(S)=\frac{1}{n}$. 

\noindent
b) $\cO_n$ is a free $\W(\FF_{p^n})$-module of rank $n$. 
A basis is given by the elements $1,S,\ldots S^{n-1}$ and every element in $x\in \cO_n$ can be 
uniquely written as 
$$
x=\sum_{i=0}^{n-1}a_iS^i
$$ 
with $a_i\in \W(\FF_{p^n})$, and thus as 
$$
x=\sum_{j=0}^{\infty}x_jS^j
$$ 
with all $x_j\in \W(\FF_{p^n})$ satisfying $x_j^{p^n}=x_j$. In fact, if $a_i=\sum_{j=0}^{\infty}a_{i,j}p^j$ 
then $x_{i+jn}=a_{i,j}$.   

\noindent 
c) Inverting $p$ makes $\cO_n$ into a division algebra $\DD_n$ 
which is central over $\QQ_p$ and free of rank $n^2$ as a vector space over $\QQ_p$. 
In fact, $\cO_n$ is a domain and if $x=\sum_{j\geq k}{x_j}S^j$ 
with $x_k\neq 0$, then $x=S^kx'$ and $x'$ is invertible in $\cO_n$. 
Inverting $p$ also inverts $S$ and thus every nontrivial element admits an inverse. 

\noindent
d) The Galois group $Gal(\FF_p^{n}:\FF_p)$ of the extension $\FF_p\subset\FF_{p^n}$ 
acts on $\cO_n$ by algebra automorphisms 
via $(\sigma,\sum_{i=0}^{n-1}x_iS^i)\mapsto \sum_{i=0}^{n-1}x_j^{\sigma}S^j$ where 
as before $x_j^{\sigma}$ is the result of applying the lift of Frobenius to $x_j$. 
We note that by the relation in (\ref{def-On}) this action of Frobenius can be realized by conjugation 
by $S$ inside $\DD_n^{\times}$. 

\medskip

\begin{defn}\label{stab} The {\it $n$-th Morava stabilizer group} at $p$ is defined as 
the group of units in $\cO_n$. It is denoted $\SS_n$, i.e. $\SS_n=\cO_n^{\times}$. 
The {\it extended $n$-th Morava stabilizer  group at $p$} is the semidirect product 
$\GG_n:=\SS_n\rtimes Gal(\FF_p^{n}:\FF_p)$. 
\end{defn} 
\smallskip

\noindent
\underbar{Remarks} a) Because $\cO_n$ is a complete (non-commutative) discrete valuation ring, an 
element $x\in \cO_n$ is invertible in $\cO_n$ if and only if $v(x)=0$.

\noindent
b) It can be shown that $\SS_n$ is the group of automorphisms of 
a suitable formal group law $\Gamma_n$ (associated to the complex oriented cohomology theory 
given by Morava $K$-theory $K(n)$). The group law $\Gamma_n$ is already defined over $\FF_p$ 
but $\SS_n$ is its automorphism group considered as a formal group law over the field $\FF_{p^n}$.

\subsection{The Morava stabilizer group as a profinite group}

The filtration of $\cO_n$ by powers of $(S)$ leads to a very useful filtration of $\SS_n$. 
For $i=\frac{k}{n}$ with $k\in \N$ we let 
$$
F_i:=F_iS_n:=\{x\in \SS^n\ | x\equiv 1\mod (S^{in})\}
$$ 
Then we get a decreasing filtration 
\begin{equation}\label{filt}
\SS_n=F_0\supset F_{\frac{1}{n}}\supset F_{\frac{2}{n}}\supset 
\end{equation}
by normal subgroups and $\SS_n$ is complete and seperated with respect to this filtration, i.e. 
the canonical map $\SS_n\to \lim_i\SS_n/F_iS_n$ is an isomorphism. In particular $\SS_n$ 
is a {\it profinite group}.  Furthermore $F_{\frac{1}{n}}S_n$ is the kernel of the reduction homomorphism 
$$
\SS_n=\cO_n^{\times}\to \FF_{p^n}^{\times}\ . 
$$
This group is also  denoted by $S_n$ and is often called the {\it strict Morava stabilizer group}. 
\smallskip 
Furthermore for each $i={\frac{k}{n}}>0$ there are canonical isomorphism 
\begin{equation}\label{gr-iso}
F_{\frac{k}{n}}/F_{\frac{k+1}{n}}\to \FF_{p^n},\ \  x=1+aS^{k}\mapsto \overline{a}
\end{equation}
if $a\in \cO_n$ and if $\overline{a}$ denotes the residue class of $a$ in $\cO_n/(S)\cong \FF_{p^n}$. 
\smallskip
In particular $S_n/F_i$ is a finite $p$-group for each $i>0$ 
and $S_n$ is a {\it profinite $p$-group}. As $S_n$ is also normal in $\SS_n$, $S_n$ is the 
$p$-Sylow subgroup of the profinite group $\SS_n$. Furthermore the exact sequence 
$$
1\to S_n\to \SS_n\to \FF_{p^n}^{\times}\to 1 
$$
splits, i.e. $\SS_n\cong S_n\rtimes \FF_{p^n}^{\times}$ is a semidirect product. 
In fact, the splitting is given by Remark c on Witt vectors above.

\subsection{The associated mixed Lie algebra of $S_n$} 

The associated graded object $gr S_n$ with respect to the above filtration with 
$$
gr_i S_n: =F_{\frac{k}{n}}S_n/F_{\frac{k+1}{n}}S_n 
$$ 
for $i={\frac{k}{n}}$ becomes a graded Lie algebra with Lie bracket $[\bar a, \bar b]$ induced by
the commutator $[x,y]:=xyx^{-1}y^{-1}$ in $S_n$. 
\smallskip
Furthermore, if we define a function $\vp$ from  $\{\frac{k}{n}| k=1,2,\ldots\}$ 
to itself by $\vp (i): =\min\lbrace i+1,pi\rbrace$ then the $p$-th power map 
on $S_n$ induces maps 
$$
P: gr_iS_n\longrightarrow gr_{\vp (i)}S_n
$$ 
which define on $grS_n$ the structure of a mixed Lie algebra in the sense of Lazard \cite{Lz}. 
If we identify the filtration quotients with $\FF_{p^n}$ as above then the Lie bracket 
and the map $P$ are explicitly given as follows. 
\smallskip

\begin{prop}\label{gradlie}\cite{HennDuke}
Let $\bar a\in gr_iS_n$, $\bar b\in gr_jS_n$. With respect to the isomorphism (\ref{gr-iso}) 
the mixed Lie algebra structure maps are given by  

a) $$
[\bar a, \bar b]=
\bar a {\bar b}^{p^{ni}} - \bar b {\bar a}^{p^{nj}}\in gr_{i+j}S_n 
$$

b)
$$
P\bar a=\begin{cases}      
{\bar a}^{\frac{p^{pni}-1}{p^{ni}-1}}                &\  i<(p-1)^{-1} \\ 
{\bar a}+{\bar a}^{\frac{p^{pni}-1}{p^{ni}-1}}  &\  i=(p-1)^{-1} \\
{\bar a}                                                           &\  i>(p-1)^{-1}\  . \ \
\end{cases}
$$ 
\end{prop}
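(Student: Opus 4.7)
The plan is to compute both structure maps directly from the multiplication in $\cO_n$, using representatives of the form $x=1+aS^k$ with $k=ni$, and then to check that refining the representative to $1+aS^k+(\text{higher order})$ perturbs the answer only by terms in a deeper filtration. Two ring-theoretic facts will drive everything: the commutation rule $S^k w=w^{\sigma^k}S^k$ for $w\in\W(\FF_{p^n})$, and the fact that $\sigma$ reduces modulo $S$ (equivalently modulo $p$) to the $p$-th power Frobenius on $\FF_{p^n}$, so that $\overline{a^{\sigma^k}}=\bar a^{p^k}$ for any $a\in\cO_n$. I shall also use that $p=S^n$ is central in $\cO_n$.

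For part (a) I would expand $xy-yx$ with $x=1+aS^k$, $y=1+bS^l$; the commutation rule gives $aS^k\cdot bS^l=ab^{\sigma^k}S^{k+l}$, so
\[
xy-yx\equiv\bigl(ab^{\sigma^k}-ba^{\sigma^l}\bigr)S^{k+l}\pmod{S^{k+l+1}}.
\]
Then $[x,y]=xyx^{-1}y^{-1}=1+(xy-yx)\cdot x^{-1}y^{-1}$, and since $x^{-1}y^{-1}=1+(\text{positive valuation})$, the leading term of $[x,y]-1$ is unchanged. Reducing mod $S$ and applying (\ref{gr-iso}) yields the formula in (a) with $p^k=p^{ni}$ and $p^l=p^{nj}$.

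For part (b) I would expand $x^p=(1+aS^k)^p$ by the binomial theorem. For $1\le j\le p-1$ the term $\binom{p}{j}(aS^k)^j$ has valuation at least $1+ji$ (since $p\mid\binom{p}{j}$), while
\[
(aS^k)^p=a^{1+\sigma^k+\sigma^{2k}+\cdots+\sigma^{(p-1)k}}S^{pk}
\]
has valuation $pi$. Hence the minimal contributions come from $j=1$ (valuation $1+i$) and $j=p$ (valuation $pi$). If $i<(p-1)^{-1}$ only $j=p$ survives, and reducing mod $S$ turns the exponent into the geometric sum $1+p^{ni}+p^{2ni}+\cdots+p^{(p-1)ni}=(p^{pni}-1)/(p^{ni}-1)$. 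If $i>(p-1)^{-1}$ only $j=1$ survives; centrality of $p=S^n$ gives $paS^k=aS^{k+n}$, hence $P\bar a=\bar a$. If $i=(p-1)^{-1}$ then $pi=1+i$ and both terms contribute to the same filtration quotient, and add to produce $\bar a+\bar a^{(p^{pni}-1)/(p^{ni}-1)}$.

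The main obstacle I expect is the critical case $i=(p-1)^{-1}$ of (b): I must keep both the $j=1$ and $j=p$ terms in play while verifying that the intermediate contributions $\binom{p}{j}(aS^k)^j$ for $2\le j\le p-1$ do not intrude; their valuation $1+ji$ is at least $1+2i$, strictly greater than $\vp(i)=1+i$, so they are safely absorbed into $F_{\vp(i)+1/n}$. Once this is handled, checking that higher order corrections to the representative $x=1+aS^k$ contribute only terms of strictly larger valuation, both in the commutator and in $x^p$, is routine filtration bookkeeping.
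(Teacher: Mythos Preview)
Your proposal is correct and follows essentially the same approach as the paper: the paper uses the identity $xyx^{-1}y^{-1}=1+((x-1)(y-1)-(y-1)(x-1))x^{-1}y^{-1}$, which is literally your $1+(xy-yx)x^{-1}y^{-1}$, and for part (b) it likewise expands $(1+aS^k)^p$ binomially, isolates the $j=1$ and $j=p$ terms, and compares $n+k$ with $pk$ to separate the three cases. Your treatment of the intermediate terms $2\le j\le p-1$ via the valuation bound $1+ji\ge 1+2i>\vp(i)$ is slightly more explicit than the paper's ``$\mathrm{mod}\ S^{2k+n}$'', but the content is identical.
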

\smallskip

\begin{proof} a) Write $i={\frac{k}{n}}$, $j={\frac{l}{n}}$ and 
choose representatives $x=1+aS^{k}\in F_iS_n$, $y=1+bS^{l}\in F_jS_n$. Then
$x^{-1}=1-aS^{k} \mod S^{k+1}$, $y^{-1}=1-bS^{l} \ \mod S^{l+1}$  
and the formula 
$$
xyx^{-1}y^{-1}=1+((x-1)(y-1)-(y-1)(x-1))x^{-1}y^{-1}
$$
shows  
$$
xyx^{-1}y^{-1}=1+(aS^{k}bS^{l}-bS^{l}aS^{k}) \mod S^{k+l+1} \ .
$$
Because $\cO_n/(S)\cong \W(\FF_{p^n}) /(p)$ we can choose $a$ and $b$ 
from $\W(\FF_{p^n})$. Then $Sw=w^{\sigma}S$ and $w^{\sigma}\equiv w^p \mod (p)$ 
give the stated formula. 

\noindent
b) Again we write $i=\frac{k}{n}$ and we choose a representative $x=1+aS^{k}$ 
with $a\in \W_{\FF_{p^n}}$. Consider the expression $x^p=\sum_r \binom{p}{r}(aS^k)^r$. 
Because $\binom{p}{r}$ is divisible by $p$ for $0<r<p$ and because $S^n=p$ 
we get  
$$
x^p\equiv 1+aS^{n+k}+\ldots +(aS^k)^p \ \mod S^{2k+n} \ .
$$
Furthermore, modulo $S^{kp+1}$ we get  
$$
(aS^k)^p=a a^{\sigma^k} \ldots a^{\sigma^{(p-1)k}}S^{pk}
\equiv a a^{p^{k}}\ldots a^{p^{(p-1)k}}S^{pk}
\equiv a^{1+p^{k}+\ldots +p^{(p-1)k}}S^{pk} \ .  
$$
Now we only have to determine whether $pk$ is smaller resp. equal resp. 
larger than $n+k$. i.e. whether $pi$ is smaller resp. equal resp. larger 
than $1+i$. These cases are equivalent to $i<(p-1)^{-1}$ resp. 
$i=(p-1)^{-1}$ resp. $i>(p-1)^{-1}$ and hence we are done.  
\end{proof}

\subsection{Torsion in the Morava stabilizer groups}\label{tor}

As an immediate consequence of Proposition \ref{gradlie} we obtain the following result. 

\begin{cor}\label{tors} {\ }

\noindent
a) If $g\in F_i$ has finite order and $i>(p-1)^{-1}$  then $g=1$. 

\noindent
b) $S_n$ is torsionfree if $n$ is not divisible by $p-1$. \qed 
\end{cor}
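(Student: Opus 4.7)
The plan is to extract both parts from Proposition \ref{gradlie}(b) by tracking the leading graded piece of a hypothetical nontrivial torsion element: any $g \neq 1$ in $S_n$ lies in some $F_j \setminus F_{j+1/n}$ for a unique $j = k/n$, its image $\bar g$ in $gr_j S_n \cong \FF_{p^n}$ is nonzero, and the proposition computes the image of $g^p$ in $gr_{\varphi(j)} S_n$.

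For part (a), I would suppose $g \neq 1$ lies in $F_i$ with $i > (p-1)^{-1}$. Since $F_i \subset F_{1/n} = S_n$ is pro-$p$, any finite-order $g$ satisfies $g^{p^{\ell}} = 1$ for some $\ell$. Let $j \geq i$ be maximal with $g \in F_j$; then $j > (p-1)^{-1}$, so Proposition \ref{gradlie}(b) gives $P \bar g = \bar g \neq 0$, which means $g^p \in F_{j+1} \setminus F_{j+1+1/n}$. Iterating, $g^{p^{\ell}}$ has nonzero image in $gr_{j+\ell} S_n$ for every $\ell \geq 0$, so $g^{p^{\ell}} \neq 1$, a contradiction.

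For part (b) I would argue by contraposition. Assume $S_n$ has nontrivial torsion; since it is pro-$p$ there is some $g \neq 1$ with $g^p = 1$, and again choose $j = k/n$ maximal with $g \in F_j$. Part (a) excludes $j > (p-1)^{-1}$. If $j < (p-1)^{-1}$, Proposition \ref{gradlie}(b) gives $P \bar g = \bar g^{(p^{pnj}-1)/(p^{nj}-1)}$; the exponent is a positive integer and $\FF_{p^n}$ is a field, so $P \bar g \neq 0$, forcing $g^p \in F_{pj} \setminus F_{pj+1/n}$, which contradicts $g^p = 1$. The only remaining possibility is $j = (p-1)^{-1}$, i.e.\ $k(p-1) = n$, so $p-1$ divides $n$.

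The delicate point—and the only place the hypothesis of part (b) is really used—is the boundary value $j = (p-1)^{-1}$: there the formula for $P\bar g$ becomes $\bar g + \bar g^{(p^{pnj}-1)/(p^{nj}-1)}$, which can genuinely vanish on nonzero elements of $\FF_{p^n}$, so a purely algebraic contradiction is unavailable. The hypothesis $p-1 \nmid n$ enters precisely to remove $1/(p-1)$ from the value set $\{k/n : k \in \N\}$ of the filtration indices, ensuring this problematic case never arises.
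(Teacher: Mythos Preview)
Your argument is correct and is exactly the one the paper has in mind: the corollary is stated with a bare \qed as ``an immediate consequence of Proposition~\ref{gradlie}'', and what you have written is precisely the unpacking of that immediate consequence---tracking the leading filtration degree of a putative torsion element and using the three cases of the $P$-map to force either $g^p\neq 1$ (when $j\neq (p-1)^{-1}$) or $(p-1)\mid n$. There is nothing to add; your identification of the boundary case $j=(p-1)^{-1}$ as the only possible source of torsion is also the point the paper exploits in the subsequent examples.
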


\noindent
\underbar{Examples} 
a) In particular, if $n=1$ and $p>2$ and $n=2$ and $p>3$ then the groups $S_n$ are torsionfree. 

\noindent
b) For $n=1$ we have $\cO_n=\Z_p$, $S_1=\{x\in \Z_p^{\times}\ |\ x\equiv 1\mod (p)\}$. 
Furthermore, it is well known that  
$$
\Z_p^{\times}\cong 
\begin{cases} 
F_1\times \FF_{p^n}^{\times} & p>2 \\
F_2\times \{\pm 1\}               & p=2 \\
\end{cases}  
$$ 
and  $F_1$ is isomorphic to the additive group $\Z_p$ if $p$ is odd.  For $p=2$ it is $F_2$ which is 
isomorphic to the additive group $\Z_2$. 

\noindent 
c) For $n=2$ the group $S_2$ is nonabelian and its structure is complicated. 
Non-trivial torsion elements can exist only if $p=2$ or $p=3$. 
 
For $p=3$ a non-trivial torsion element must be nontrivial in $F_{\frac{1}{2}}/F_1$. 
An easy calculation shows that if $\omega$ is a fixed chosen primitive $8$-th root of unity in 
$\W_{\FF_9}$ then the element   
\begin{equation}\label{def-a}
a=-\frac{1}{2}(1+\omega S)
\end{equation} 
satisfies $a^3=1$. (It is clearly in $F_{\frac{1}{2}}$ and its image in $F_{\frac{1}{2}}/F_1$ is 
$\overline{\omega}$.)  

For $p=2$ there is always, i.e. for each $n$, the element $-1=1-S^n$ which is in $F_1$ 
and is a nontrivial element of order $2$. If $n=2$ there are elements of order $4$ 
which must be nontrivial in $F_{\frac{1}{2}}/F_1$. 

\noindent 
d) If $n=4$ and $p=2$ there is a chance for the existence of elements of order $8$ which are 
nontrivial in $F_{\frac{1}{4}}/F_{\frac{2}{4}}$. In fact, such elements exist and 
they are in the background of the recent solution of the  Kervaire invariant one problem by 
Hill, Hopkins and Ravenel.

\section{On the cohomology of the stabilizer groups with trivial coefficients}\label{triv-coh}
\medskip

The stabilizer groups are examples of $p$-adic Lie groups. For such groups the category of profinite 
modules over $\Z_p[[G]]$ has enough projectives and one can define continous cohomology with 
coefficients in a profinite $\Z_p[[G]]$-module $M$ simply as 
$H^s_{cts}(G,M)=\Ext^s_{\Z_p[[G]]}(\Z_p,M)$. Likewise one can define continuous homology with 
coefficients in a profinite $\Z_p[[G]]$-module $N$ simply as 
$H_s^{cts}(G,M)=\Tor^{\Z_p[[G]]}_s(\Z_p,N)$. In the sequel cohomology resp homology will always be 
continuous cohomology resp continuous homology and we will simply write is as $H^*(G,M)$ resp. 
$H_*(G,M)$. 

\subsection{$H_1$. The stabilizer group made abelian}
\medskip 

The commutator formula in Proposition \ref{gradlie} 
can be used to calculate the abelianization of the groups $S_n$. 
In this profinite setting it is the quotient 
$S_n/\overline{[S_n,S_n]}$ which identifies with the homology
$H_1(S_n,\Z_p)$.  (Here $\overline{E}$ denotes the closure of a given subset $E\subset S_n$). 
Likewise $H_1(S_n,\Z/p)$ identifies 
with the quotient $S_n/\overline{\langle [S_n,S_n],S_n^p\rangle}$.  
\medskip

Here is the crucial lemma on commutators.

\begin{lem}\label{commutator}  Let $p$ be any prime and let $k$ and $l$ be integers $>0$. 

\noindent
a) If $\frac{k+1}{n}$ is not an integer then the commutator map 
$gr_{\frac{k}{n}}S_n\otimes gr_{\frac{1}{n}}S_n\to gr_{\frac{k+1}{n}}S_n$ is onto.   

\noindent
b)  If $\frac{k+1}{n}$ is an integer then the image of the commutator map 
$gr_{\frac{k}{n}}S_n\otimes gr_{\frac{1}{n}}S_n\to gr_{\frac{k+1}{n}}S_n$ is equal 
to the kernel of the trace $tr:\FF_{p^n}\to \FF_p$

\noindent
c) If ${\frac{k+l}{n}}$ is an integer then the image of the commutator map 
$gr_{\frac{k}{n}}S_n\otimes gr_{\frac{l}{n}}S_n\to gr_{\frac{k+l}{n}}S_n$ is contained in the kernel of the 
trace $tr:\FF_{p^n}\to \FF_p$.
\end{lem}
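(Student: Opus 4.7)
The plan is to reduce the entire statement to a trace computation on $\FF_{p^n}$ using the explicit bracket formula from Proposition \ref{gradlie}(a). Identifying $gr_{k/n}S_n$ and $gr_{l/n}S_n$ with $\FF_{p^n}$ via (\ref{gr-iso}), the commutator becomes the $\FF_p$-bilinear map
\begin{equation*}
[\bar a,\bar b]=\bar a\bar b^{p^k}-\bar b\bar a^{p^l},
\end{equation*}
and the whole lemma will follow by analyzing when the image of this map is contained in, or exhausts, the kernel of the trace $tr:\FF_{p^n}\to\FF_p$.

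First I would prove (c), since it is needed for (b). The key step is Frobenius-invariance of the trace: applying $\sigma^{n-l}$ inside $tr$ yields
\begin{equation*}
tr(\bar b\bar a^{p^l})=tr\bigl((\bar b\bar a^{p^l})^{p^{n-l}}\bigr)=tr(\bar b^{p^{n-l}}\bar a^{p^n})=tr(\bar b^{p^{n-l}}\bar a),
\end{equation*}
whence
\begin{equation*}
tr([\bar a,\bar b])=tr\bigl(\bar a(\bar b^{p^k}-\bar b^{p^{n-l}})\bigr).
\end{equation*}
When $n\mid k+l$ one has $k\equiv n-l\pmod n$, so $\bar b^{p^k}=\bar b^{p^{n-l}}$ for every $\bar b\in\FF_{p^n}$ and the trace vanishes, proving (c).

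For parts (a) and (b) I would specialize to $l=1$. Setting $\bar b=1$ produces the Artin--Schreier map $[\bar a,1]=\bar a-\bar a^p$, which is $\FF_p$-linear with kernel $\FF_p$ and lands in $\ker(tr)$; by dimension count its image is exactly $\ker(tr)$. Hence the image of the bracket always contains $\ker(tr)$, and combined with (c) in the case $l=1$ this immediately yields (b). For (a), where $n\nmid k+1$, the $\FF_p$-linear endomorphisms $X\mapsto X^{p^k}$ and $X\mapsto X^{p^{n-1}}$ of $\FF_{p^n}$ are distinct (their exponents differ modulo $n$), so some $\bar b$ satisfies $\bar b^{p^k}\neq\bar b^{p^{n-1}}$; non-degeneracy of the trace pairing then provides an $\bar a$ with $tr([\bar a,\bar b])\neq 0$. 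Since the image of the bracket is an $\FF_p$-subspace containing both $\ker(tr)$ and a vector of nonzero trace, it must be all of $\FF_{p^n}$. The only real obstacle I anticipate is bookkeeping the Frobenius indices and the $l=1$ specialization cleanly; once the identity $tr([\bar a,\bar b])=tr(\bar a(\bar b^{p^k}-\bar b^{p^{n-l}}))$ is in hand, the rest is elementary linear algebra over $\FF_p$.
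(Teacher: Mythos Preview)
Your proof is correct and follows essentially the same route as the paper's own argument: both reduce everything to the explicit bracket formula from Proposition~\ref{gradlie}, use Frobenius-invariance of the trace to rewrite $tr([\bar a,\bar b])$ as the trace of a product, observe via $\bar b=1$ that the Artin--Schreier image $\ker(tr)$ is always contained in the image, and then distinguish cases according to whether $k+1$ (resp.\ $k+l$) is divisible by $n$. The only differences are cosmetic: you prove (c) first and invoke it for (b), whereas the paper proves the $l=1$ instance inside (b) and generalizes afterward; and you apply $\sigma^{n-l}$ to obtain $tr([\bar a,\bar b])=tr(\bar a(\bar b^{p^k}-\bar b^{p^{n-l}}))$, while the paper applies $\sigma^l$ to obtain $tr(\bar a^{p^l}(\bar b^{p^{k+l}}-\bar b))$.
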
 

\begin{proof} a) By Proposition \ref{gradlie} the commutator map is given by the formula 
$$
[\bar a, \bar b]=
\bar a {\bar b}^{p^k} - \bar b {\bar a}^{p}
$$ 
By taking $b=1$ one sees that all elements of the form $\bar a -{\bar a}^p$ belong to the image. 
This is an $\FF_p$-linear subspace of $\FF_{p^n}$ of $\FF_p$-codimension $1$ 
which is contained in and therefore equal to the kernel of the trace.  
Furthermore, if $\frac{k+l}{n}$ is not an integer, 
it is enough to exhibit a couple $(\bar a ,\bar b)$ 
such that 
$$
tr(\bar a {\bar b}^{p^k} - \bar b {\bar a}^{p})=tr({\bar a}^p({\bar b}^{p^{k+1}} - \bar b)\neq 0 \ . 
$$ 
Now, if $k+1$ is not divisible by $n$ there exists $\bar b$ such that 
$c:={\bar b}^{p^{k+1}} - \bar b\neq 0$. Because the trace is a nontrivial linear form and because  
$$
\FF_{p^n}\to \FF_{p^n}, \ \bar a\to {\bar a}^pc
$$ 
is bijective we are done. 

\noindent
b) If $\frac{k+1}{n}$ is an integer, i.e. $k+1$ divisible by $n$, then  ${\bar b}^{p^{k+1}} - \bar b=0$ 
for all $\bar b$ and therefore 
$$
tr(\bar a {\bar b}^{p^k} - \bar b {\bar a}^{p})=tr({\bar a}^p({\bar b}^{p^{k+1}} - \bar b)=0 \ . 
$$ 
On the other hand we have already seen in the proof of (a) that the kernel of the trace is in the image of the commutator map. 

\noindent
c) In general the commutator map 
$gr_{\frac{k}{n}}S_n\otimes gr_{\frac{l}{n}}\SS_n\to gr_{\frac{k+l}{n}}\SS_n$ 
is given by 
$$
[\bar a, \bar b]=
\bar a {\bar b}^{p^k} - \bar b {\bar a}^{p^l}
$$ 
and hence 
$$
tr(\bar a {\bar b}^{p^k} - \bar b {\bar a}^{p^l})=tr({\bar a}^{p^l}({\bar b}^{p^{k+l}} - \bar b))\ .    
$$ 
If $\frac{k+l}{n}$ is an integer then $k+l$ is divisible by $n$ and hence 
${\bar b}^{p^{k+l}} - \bar b=0$. 
\end{proof}
\smallskip

\begin{prop} Let $p$ be an odd prime and $n>1$. Then 
$$
H_1(S_n,\Z_p)\cong \Z_p\oplus (\Z/p)^{n}\ . 
$$ 
As topological generator of $\Z_p$ one can choose $1+cS^n=1+2c$ where 
$c$ is in $\W_{\FF_{p^n}}$ of valuation $0$ with $tr(\bar c)\neq 0$ and as generators 
of the $n$ summands $\Z/p$ one can choose the elements 
$1+\omega^{p^j}S$, $j=0,\ldots,n-1$ of order $p$ where $\omega$ 
is a fixed primitive root of unity of order $p^n-1$.   
\end{prop}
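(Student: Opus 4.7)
The plan is to compute $G := H_1(S_n,\Z_p) = S_n/\overline{[S_n,S_n]}$ by analysing the filtration $F_{k/n}(S_n)$ and then splitting the resulting extension via a Maschke-type averaging under the conjugation action of $\FF_{p^n}^\times \subset \SS_n$. First I would compute the associated graded of $G$ using Lemma~\ref{commutator}: nothing reaches $gr_{1/n}$ by commutators, so $gr_{1/n}(G) = \FF_{p^n}$; for $k \geq 2$ with $n \nmid k$, part~(a) gives surjectivity of $gr_{(k-1)/n} \otimes gr_{1/n} \to gr_{k/n}$, so $gr_{k/n}(G) = 0$; and for $k = ln$ with $l \geq 1$, parts~(b) and~(c) together identify the image of all commutators landing there with $\ker(tr)$, giving $gr_{ln}(G) \cong \FF_p$ via the trace.

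Next I would identify $H \subset G$, the image of $F_{n/n}(S_n)$, with $\Z_p$. Since $1 > 1/(p-1)$ for $p$ odd, Proposition~\ref{gradlie}(b) gives $P(\bar a) = \bar a$ on $gr_{ln}(S_n) \to gr_{(l+1)n}(S_n)$, which after the trace identification descends to the identity $\FF_p \to \FF_p$ between adjacent graded pieces of $H$. Thus multiplication by $p$ on $H$ shifts the filtration by one integer step and is an isomorphism on each graded quotient, forcing $H \cong \varprojlim_l \Z/p^l = \Z_p$; the element $1 + cS^n = 1 + cp$ with $tr(\bar c) \neq 0$ projects to a nonzero element of $gr_1 H = \FF_p$ and is therefore a topological generator.

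For the splitting of $0 \to H \to G \to \FF_{p^n} \to 0$, I would exploit the conjugation action of $\FF_{p^n}^\times \subset \SS_n$ on $G$; this is well defined because the inner action of $S_n$ on its own abelianisation is trivial. Using $Sw = w^\sigma S$, a direct computation shows that $\lambda \in \FF_{p^n}^\times$ acts on $gr_{m/n}(S_n) \cong \FF_{p^n}$ by multiplication by $\lambda^{1-p^m}$. Hence the induced action on each $gr_{ln}(G) \cong \FF_p$ is trivial; and because any continuous automorphism of $\Z_p$ trivial modulo $p$ lies in the pro-$p$ group $1 + p\Z_p$, while $|\FF_{p^n}^\times| = p^n - 1$ is coprime to $p$, the action of $\FF_{p^n}^\times$ on $H \cong \Z_p$ is itself trivial. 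On the other hand, on $G/H \cong \FF_{p^n}$ the action is by $\lambda^{1-p}$, whose character sum $\sum_\lambda \lambda^{1-p}$ vanishes for $n \geq 2$. Since $p^n - 1$ is a unit in $\Z_p$, the averaging idempotent $\pi := (p^n-1)^{-1} \sum_\lambda \lambda$ is a $\Z_p$-linear retraction $G \twoheadrightarrow H$ whose kernel maps isomorphically onto $\FF_{p^n}$, yielding the splitting $G \cong \Z_p \oplus \FF_{p^n} = \Z_p \oplus (\Z/p)^n$.

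Finally, the elements $1 + \omega^{p^j}S$ lie in $F_{1/n}$ and project to the Galois conjugates $\omega^{p^j}$ in $gr_{1/n}(G) = \FF_{p^n}$; for a suitably chosen primitive root of unity $\omega$ these conjugates form an $\FF_p$-basis of $\FF_{p^n}$ (normal primitive elements exist), and applying $1 - \pi$ to each then produces a basis of the $(\Z/p)^n$ summand whose elements are of order $p$. The main obstacle is the splitting step: one must verify both that the $\FF_{p^n}^\times$-action on $H$ is genuinely trivial (not merely on the associated graded) and that the character $\lambda \mapsto \lambda^{1-p}$ is nontrivial on $\FF_{p^n}^\times$, both essential for the averaging idempotent to serve as a true retraction.
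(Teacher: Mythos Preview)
Your proof is correct and, for the splitting step, takes a genuinely different route from the paper's. Both arguments begin by computing $gr_\bullet(G)$ for $G = S_n/\overline{[S_n,S_n]}$ via Lemma~\ref{commutator} and identifying the image $H$ of $F_1$ with $\Z_p$ using Proposition~\ref{gradlie}(b). The paper then argues that the induced $p$-th power map $P\colon gr_{1/n}(G) \to gr_{\vp(1/n)}(G)$ vanishes---automatic when $n \neq p$ since the target is zero, and via a trace-of-norm computation when $n = p$---and asserts that this forces $G \cong \Z_p \oplus (\Z/p)^n$. You instead exploit the conjugation action of $\FF_{p^n}^\times \subset \SS_n$: triviality on $H$ (trivial on each graded piece, then lifted to $H$ itself via the prime-to-$p$ order of $\FF_{p^n}^\times$) and nontriviality of the character $\lambda^{1-p}$ on $G/H$ let the averaging idempotent retract $G$ onto $H$.

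Your Maschke argument is in fact more robust. The paper's reasoning directly yields $pg \in pH = F_2(G)$ only when $\vp(1/n) \geq 1$, i.e.\ when $p \geq n$: for $p > n$ one has $x^p \in F_{(n+1)/n}$ and $F_{(n+1)/n}(G) = F_2(G)$, while for $p = n$ the explicit trace computation applies. But for $p < n$ one has $\vp(1/n) = p/n < 1$, the target $gr_{p/n}(G)$ already vanishes, and the statement ``$P$ is trivial on $gr_{1/n}(G)$'' carries no information---one still needs to show that the connecting map $G/H \to H/pH$ vanishes, which your equivariance argument supplies uniformly. On the other hand, the paper's approach stays entirely inside $S_n$ and its mixed Lie algebra, without invoking the ambient group $\SS_n$.

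One small point: you correctly produce order-$p$ generators of the torsion summand by applying $1 - \pi$ to the elements $1 + \omega^{p^j}S$, and you note that $\omega$ must be chosen so that its Galois conjugates form an $\FF_p$-basis. The paper's assertion that the unmodified elements $1 + \omega^{p^j}S$ themselves have order $p$ in $G$ is slightly stronger and is not explicitly verified in either argument.
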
 

\begin{proof}  The filtration on $S_n$ introduced in (\ref{filt}) 
induces one on $S_n/\overline{[S_n,S_n]}$ and Lemma \ref{commutator} shows that 
$gr_i(S_n/\overline{[S_n,S_n]})$ is isomorphic to $gr_iS_n\cong \FF_q$ if $i=\frac{1}{n}$, isomorphic 
to the image of $tr:\FF_q\to \FF_p)$ if $i$ is an integer, and zero otherwise.  
By Proposition \ref{gradlie} the induced $p$-th power map 
sends $gr_i(S_n/\overline{[S_n,S_n]})$ isomorphically to $gr_{i+1}(S_n/\overline{[S_n,S_n]})$ 
if $i$ is an integer, and it is clearly trivial on 
$gr_{\frac{1}{n}}(S_n/\overline{[S_n,S_n]})$ except possibly if $n=p$.   
Furthermore, if $n=p$ we get $tr(P(\bar a))=tr({\bar a}^{1+p+\ldots p^{(p-1)}})=0$ 
because ${\bar a}^{1+p+\ldots p^{(p-1)}}$ is fixed by Frobenius 
and thus the trace is $p$ times this element, hence trivial modulo $p$. 
Now Lemma \ref{commutator} implies that the induced $p$-th power map is always 
trivial on $gr_{\frac{1}{n}}(S_n/\overline{[S_n,S_n]})$ and this implies the result.  
\end{proof}

\begin{prop} Let $p=2$ and $n>1$. Then 
$$
H_1(S_n,\Z_2)\cong \Z_2\oplus (\Z/2)^{n+1}\ . 
$$ 
As topological generator of $\Z_2$ one can choose  $1+cS^{2n}=1+4c$ 
and as generators of the $n+1$ summands $\Z/2$ one can choose the elements 
$1+cS^n$, $1+\omega^{2^k}S$, $k=1,\ldots,n$,  
where $c$ is in $\W_{\FF_{2^n}}$ of valuation $0$ and $tr(\bar c)\neq 0$,  
and $\omega$ is a fixed primitive root of unity of order $2^n-1$.  
\end{prop}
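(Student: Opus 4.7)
The plan is to follow the structure of the odd prime proof, deducing the result from the graded structure of $A := S_n/\overline{[S_n,S_n]}$ inherited from the filtration (\ref{filt}) together with the induced action of the $2$-power map on the associated graded. By Lemma \ref{commutator}, applied verbatim as in the odd prime case, one obtains $gr_{1/n} A \cong \FF_{2^n}$, $gr_i A \cong \mathrm{Im}(tr\colon \FF_{2^n}\to\FF_2) = \FF_2$ for every integer $i \geq 1$, and $gr_i A = 0$ otherwise.

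The main new computation is the induced $2$-power map $P\colon gr_i A \to gr_{\varphi(i)} A$ furnished by Proposition \ref{gradlie}. For integer $i \geq 2$ one has $i > 1 = (p-1)^{-1}$, so $P = \mathrm{id}$ induces isomorphisms $gr_i A \xrightarrow{\cong} gr_{i+1} A$. At $i = 1/n$ with $n \geq 3$ the target grade $2/n$ is non-integer and $gr_{2/n} A = 0$, while for $n = 2$ the formula $P(\bar a) = \bar a^3$ shows $P(\bar a) \in \FF_2 = \ker(tr\colon \FF_4 \to \FF_2)$, so the induced map to $gr_1 A$ vanishes in either case. The delicate case is $i = 1 = (p-1)^{-1}$: Proposition \ref{gradlie}(b) gives $P(\bar a) = \bar a + \bar a^{(2^{2n}-1)/(2^n-1)} = \bar a + \bar a^2$, using $\bar a^{2^n} = \bar a$ in $\FF_{2^n}$; Frobenius-invariance of the trace then yields $tr(\bar a + \bar a^2) = 2\, tr(\bar a) = 0$, so $P(\bar a) \in \ker(tr)$ and the induced map $gr_1 A \to gr_2 A$ vanishes as well.

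The result then assembles as in the odd prime case: the integer layers $i \geq 2$ telescope under multiplication by $2$ into a single topological $\Z_2$-summand, generated by a lift at grade $2$ of an element of nontrivial trace, namely $1 + cS^{2n} = 1+4c$ with $tr(\bar c) \neq 0$. The layers at $i = 1/n$ and $i = 1$, being killed by $2$, contribute $(\Z/2)^n$ and $\Z/2$ respectively; the former is generated by the elements $1 + \omega^{2^k}S$, $k = 1,\ldots,n$, for $\omega$ chosen so that its Galois orbit is an $\FF_2$-basis of $\FF_{2^n}$, and the latter by $1 + cS^n$, whose grade-$1$ class is $tr(\bar c) \neq 0$. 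Altogether $A \cong \Z_2 \oplus (\Z/2)^{n+1}$.

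The main obstacle compared with the odd prime argument is precisely that $(p-1)^{-1} = 1$ is an integer for $p=2$, so the boundary-case formula of Proposition \ref{gradlie}(b) is applied at a graded piece which is itself nonzero in $A$; its vanishing is not automatic and relies on the characteristic-$2$ identity $tr(\bar a^2) = tr(\bar a)$. This is exactly the mechanism responsible for the extra $\Z/2$ summand absent from the odd prime case. A subsidiary point is the separate treatment for $n=2$, where $\varphi(1/n) = 1$ also lands in integer grade and the vanishing of the induced map must be checked directly from the formula $P(\bar a) = \bar a^3$.
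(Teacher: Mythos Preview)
Your proof is correct and follows essentially the same approach as the paper's: both compute the associated graded of $A=S_n/\overline{[S_n,S_n]}$ via Lemma~\ref{commutator}, then analyze the induced $P$-map via Proposition~\ref{gradlie}, with the key new step being the verification at $i=1$ that $P(\bar a)=\bar a+\bar a^{2}$ has vanishing trace. Your treatment is in fact slightly more explicit than the paper's in two places --- you spell out the $n=2$ case $P(\bar a)=\bar a^{3}\in\ker(tr)$ directly rather than appealing to the odd-prime argument, and you correctly note that the Galois orbit of $\omega$ must form an $\FF_2$-basis of $\FF_{2^n}$ for the listed elements to generate the $(\Z/2)^n$ summand, a condition the paper leaves implicit.
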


\begin{proof}  Again the filtration on $S_n$ introduced in (\ref{filt}) 
induces one on $S_n/\overline{[S_n,S_n]}$ and the previous lemma shows that 
$gr_i(S_n/\overline{[S_n,S_n]})$ is isomorphic to $gr_iS_n\cong \FF_q$ if $i={\frac{1}{n}}$ and 
isomorphic the image of $tr:\FF_q\to \FF_p$ if $i$ is an integer, and zero otherwise.  
By Proposition \ref{gradlie}  the induced $p$-th power map on $gr_i(S_n/\overline{[S_n,S_n]})$ 
sends $gr_i(S_n/\overline{[S_n,S_n]})$ isomorphically to $gr_{i+1}(S_n/\overline{[S_n,S_n]})$ 
if $i$ is an integer $>1$, and it is clearly trivial on 
$gr_i$ except possibly if $i=\frac{1}{2}$ or $i=1$. The same argument as in the previous proof 
shows that  the induced $p$-th power map is  trivial on $gr_{\frac{1}{2}}$. 
For $i=1$ Proposition \ref{gradlie} gives 
$$
P(\bar a)=\bar a +\bar a^{\frac{2^{2n-1}}{2^n-1}}=\bar a +\bar a^{2^n+1}=\bar a +\bar a^{2}\ . 
$$ 
The trace of this element is again trivial and the result follows once 
again by Lemma \ref{commutator}. 
\end{proof}
\smallskip

\begin{cor}\label{H1} Let $p$ be a prime and $n>1$. Then 
$$
H_1(S_n,\Z/p)\cong 
\begin{cases} 
(\Z/p)^{n+1}    & p>2 \\
(\Z/2)^{n+2} & p=2 \ . \qed  \\
\end{cases} 
$$
\end{cor}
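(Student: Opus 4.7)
The plan is simply to deduce the corollary from the two preceding propositions by changing coefficients from $\Z_p$ to $\Z/p$. The computations of $H_1(S_n,\Z_p)$ are already in hand, so the only remaining issue is to reduce mod $p$.

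First, I would invoke the short exact sequence of trivial (profinite) $S_n$-modules
\begin{equation*}
0\to \Z_p\buildrel{\cdot p}\over\longrightarrow \Z_p\to \Z/p\to 0
\end{equation*}
and pass to the associated long exact sequence in continuous homology. The relevant portion reads
\begin{equation*}
H_1(S_n,\Z_p)\buildrel{\cdot p}\over\longrightarrow H_1(S_n,\Z_p)\to H_1(S_n,\Z/p)\to H_0(S_n,\Z_p)\buildrel{\cdot p}\over\longrightarrow H_0(S_n,\Z_p).
\end{equation*}

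Next, I observe that $H_0(S_n,\Z_p)=\Z_p$ since the coefficients are trivial, so multiplication by $p$ on $H_0$ is injective. This kills the connecting homomorphism and yields a natural isomorphism
\begin{equation*}
H_1(S_n,\Z/p)\cong H_1(S_n,\Z_p)\otimes_{\Z_p}\Z/p.
\end{equation*}

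Finally, I substitute the values of $H_1(S_n,\Z_p)$ computed in the two preceding propositions. For $p$ odd this gives $(\Z_p\oplus(\Z/p)^n)\otimes_{\Z_p}\Z/p\cong (\Z/p)^{n+1}$, and for $p=2$ it gives $(\Z_2\oplus(\Z/2)^{n+1})\otimes_{\Z_2}\Z/2\cong (\Z/2)^{n+2}$, which is the claim. There is no real obstacle here: all the substantive work — identifying generators via Proposition \ref{gradlie} and Lemma \ref{commutator} — has already been done in the previous two propositions, and this step is a pure formality of the universal coefficient theorem for continuous homology of profinite groups.
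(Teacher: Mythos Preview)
Your proposal is correct and matches the paper's intent: the corollary is stated with a \qed\ in place of a proof, so the paper treats it as an immediate consequence of the two preceding propositions, and your universal-coefficient argument is exactly the standard way to make that deduction explicit. One could alternatively read it off directly from the filtration analysis in those proofs via the identification $H_1(S_n,\Z/p)\cong S_n/\overline{\langle [S_n,S_n],S_n^p\rangle}$ mentioned just before the propositions, but your route is cleaner once the $\Z_p$-coefficient results are in hand.
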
 
\smallskip

\subsection{The chomology of  $S_1$} 
\medskip

This case is fairly easy. 

\begin{prop}\label{cohS1} {\ }

\noindent
a) If $p$ is odd then 
$$
H^*(S_1,\Z_p)\cong 
\begin{cases} 
\Z_p & n=0,1 \\
0      & \text{else} \ . \\
\end{cases} 
$$
b) If $p=2$ then 
$$
H^*(S_1,\Z_p)\cong 
\begin{cases} 
\Z_2 &  n=0,1 \\
\Z/2  &  n\geq 2\ . \\ 
\end{cases}  
$$
\end{prop}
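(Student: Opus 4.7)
My plan is to identify $S_1$ explicitly as a profinite group in each case, then to compute the cohomology directly from small resolutions of the trivial module.

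From the example following Corollary \ref{tors}, $\cO_1 = \Z_p$ and $S_1 = 1 + p\Z_p$. For odd $p$, the $p$-adic logarithm converges on $1 + p\Z_p$ and gives a topological isomorphism $S_1 \cong \Z_p$ onto the additive group. For $p=2$, every unit in $\Z_2$ is automatically $\equiv 1 \pmod 2$, so $S_1 = \Z_2^{\times}$, which splits as $\{\pm 1\} \times (1 + 4\Z_2) \cong C_2 \times \Z_2$ using the $2$-adic logarithm on the principal units mod $4$ (as recorded in the example at the end of Section 4).

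Next I would compute $H^*(\Z_p, \Z_p)$ with trivial action. A topological generator $\sigma$ furnishes a short free resolution of profinite $\Z_p[[\Z_p]]$-modules
\[
0 \longrightarrow \Z_p[[\Z_p]] \xrightarrow{\sigma - 1} \Z_p[[\Z_p]] \longrightarrow \Z_p \longrightarrow 0.
\]
Applying $\Hom^{cts}_{\Z_p[[\Z_p]]}(-, \Z_p)$ yields the two-term complex $\Z_p \xrightarrow{0} \Z_p$, so $H^0 \cong H^1 \cong \Z_p$ and $H^n = 0$ for $n \geq 2$. This settles the odd case.

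For $p=2$, with $S_1 \cong C_2 \times \Z_2$, I would invoke the Künneth formula for continuous cohomology of profinite groups, which is applicable here since each factor has finitely generated cohomology in each degree. The two inputs are $H^*(\Z_2, \Z_2)$, just computed, and $H^*(C_2, \Z_2)$, which is $\Z_2$ in degree $0$, $\Z/2$ in every positive even degree, and $0$ in odd degrees (from the standard periodic resolution by free $\Z_2[C_2]$-modules with differentials $1-\tau$ and $1+\tau$). Assembling degree by degree, the tensor summands give $\Z_2$ in degrees $0$ and $1$ (from $H^0(C_2) \otimes H^*(\Z_2)$) and $\Z/2$ in every degree $n \geq 2$ (alternately from $H^{2k}(C_2) \otimes H^0(\Z_2)$ in even degrees and $H^{2k}(C_2) \otimes H^1(\Z_2)$ in odd ones); all Tor terms vanish because in each relevant pair one factor is the torsion-free $\Z_2$-module $H^*(\Z_2, \Z_2)$. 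No step is a real obstacle; the only point that needs a word of justification is the Künneth formula in the continuous/profinite setting, and this is standard given the finite generation of each factor in each degree.
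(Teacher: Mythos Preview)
Your proof is correct and follows essentially the same route as the paper: identify $S_1$ as $\Z_p$ (odd $p$) or $C_2\times\Z_2$ ($p=2$), compute $H^*(\Z_p,\Z_p)$ via the two-term free resolution over $\Z_p[[\Z_p]]$ given by $\sigma-1$, and for $p=2$ invoke the K\"unneth theorem. The paper additionally records the identification $\Z_p[[\Z_p]]\cong\Z_p[[T]]$ (the Iwasawa-theoretic viewpoint), but this is the same resolution you wrote down.
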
 

\begin{proof} We have $S_1=\Z_p$ if $p>2$ and $S_1=\Z_2^{\times}\cong \Z/2\times \Z_2$ if $p=2$. 
The result follows therefore as soon as we know that $H^n(\Z_p,\Z_p)\cong \Z_p$ 
if $n=0,1$ and trivial otherwise. (For $p=2$ we use the Kuenneth theorem). 
\smallskip
Now cohomology is calculated from a resolution of the trivial module $\Z_p$ by projective modules over 
the completed group ring $\Z_p[[\Z_p]]$. There is an obvious algebra homomorphism 
from the polynomial algebra $\Z_p[T]$ to the group algebra $\Z_p[\Z_p]$ which sends $T$ to $t-e$ 
where $t$ is a topological generator of the  group $\Z_p$. 
This map extends to a continuous homomorphism from the power series ring 
$\Z_p[[T]]$ to $\Z_p[[\Z_p]]$ which can be checked to be an isomorphism. 
In fact, this isomorphism is the starting point for Iwasawa theory in number theory (cf. \cite{NSW}). 
Now it is obvious that the trivial $\Z_p[[T]]$-module $\Z_p$ admits a 
projective resolution 
\begin{equation}\label{Iwasawa-res}
0\to \Z_p[[T]]\buildrel{T}\over\longrightarrow  \Z_p[[T]]\to \Z_p
\end{equation}
and the result follows.  
\end{proof}

\subsection{Structural properties of $H^*(S_n,\Z/p)$}\label{struct-prop}
\medskip

Proposition \ref{cohS1} and its proof yield immediately the additive structure of $H^*(S_1,\Z/p)$ resp. of 
$H^*(\Z_p,\Z_p)$. In fact, there is a cup product structure which is uniquely determined 
by the additive result.  
\smallskip

\begin{prop} Let $p$ be any prime. Then 
$$
H^*(\Z_p,\Z/p)\cong \Lambda_{\Z/p}(H^1(S_1,\Z/p))\cong \Lambda(e)
$$ 
with $e\in H^1$ given by the reduction homomorphism $\Z_p\to \Z/p$ considered as an element 
in $H^1(\Z_p,\Z/p)=\Hom(\Z_p,\Z/p)$ and $\Lambda_{\Z/p}$ denotes the exterior algebra over 
$\Z/p$. \qed 
\end{prop}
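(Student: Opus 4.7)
The plan is to apply the Iwasawa resolution (\ref{Iwasawa-res}) to continuous $\Z/p$ coefficients and then observe that the graded ring structure is forced by degree reasons.

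First I would tensor the projective resolution
$$
0\to \Z_p[[T]]\xrightarrow{\,T\,} \Z_p[[T]]\to \Z_p\to 0
$$
of the trivial $\Z_p[[\Z_p]] \cong \Z_p[[T]]$-module $\Z_p$ with $\Hom_{\Z_p[[T]]}(-,\Z/p)$. Since $T=t-e$ acts on the trivial module $\Z/p$ as $1-1=0$, the resulting cochain complex is
$$
\Z/p\xrightarrow{\,0\,}\Z/p,
$$
concentrated in cohomological degrees $0$ and $1$. This gives the additive identification $H^0(\Z_p,\Z/p)\cong H^1(\Z_p,\Z/p)\cong \Z/p$ and $H^n(\Z_p,\Z/p)=0$ for $n\ge 2$.

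Next I would identify the generator in degree $1$. Under the standard isomorphism $H^1(G,\Z/p)\cong \Hom_{cts}(G,\Z/p)$ (valid for any profinite group $G$ acting trivially on $\Z/p$), a generator of $H^1(\Z_p,\Z/p)$ corresponds to a generator of $\Hom_{cts}(\Z_p,\Z/p)$. The latter is a one-dimensional $\Z/p$-vector space generated by the reduction homomorphism, so we may take $e$ to be this class.

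Finally, the ring structure is automatic: since $H^{\ge 2}(\Z_p,\Z/p)=0$, the cup product $e\cup e\in H^2(\Z_p,\Z/p)$ vanishes, and more generally every product of positive-degree classes is zero. Thus $H^*(\Z_p,\Z/p)$ is a graded-commutative $\Z/p$-algebra of total dimension $2$, with a single generator $e$ in degree one satisfying $e^2=0$, i.e.\ $H^*(\Z_p,\Z/p)\cong \Lambda_{\Z/p}(e)$. There is no real obstacle here; the only point requiring a brief comment is that at $p=2$ the relation $e^2=0$ does not follow from graded-commutativity alone, but it is imposed by the vanishing of $H^2$.
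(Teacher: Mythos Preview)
Your proposal is correct and follows exactly the approach indicated by the paper: the sentence preceding the proposition already says that the additive structure follows immediately from Proposition~\ref{cohS1} and its proof (i.e.\ from the Iwasawa resolution~(\ref{Iwasawa-res})), and that the cup product structure is then uniquely determined by the additive result. You have simply spelled out these two steps, including the nice remark that at $p=2$ the relation $e^2=0$ comes from the vanishing of $H^2$ rather than graded-commutativity.
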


\noindent
Via the Kuenneth theorem we get the following corollary. 

\begin{cor} Let $p$ be any prime. Then 
$$
H^*(\Z_p^n,\Z/p)\cong \Lambda_{\Z/p}(H^1(\Z_p^n,\Z/p))\cong \Lambda_{\Z/p}(e_1,\ldots ,e_n)\ . 
$$
with $e_i\in H^1$ for $i=1,\ldots,i=n$, a dual basis of $\Z_p^n/(p)$ and $\Lambda_{\Z/p}$ denoting the 
exterior algebra over $\Z/p$. 
\qed 
\end{cor}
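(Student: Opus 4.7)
The plan is to reduce to the one-factor case proved in the preceding proposition via the Künneth theorem for continuous cohomology, applied over the field $\Z/p$. Since $\Z_p^n = \Z_p \times \Z_p^{n-1}$, an iterated application (or equivalently an $n$-fold application) yields an isomorphism of graded $\Z/p$-algebras
$$
H^*(\Z_p^n,\Z/p)\cong H^*(\Z_p,\Z/p)^{\otimes n}.
$$
The Künneth formula is available here because $\Z/p$ is a field and the cohomology groups $H^k(\Z_p,\Z/p)$ are finite-dimensional in each degree (indeed, of dimension one by the previous proposition), so all relevant Tor terms vanish and the projective resolution exhibited in (\ref{Iwasawa-res}) tensors to a projective resolution over the completed product group algebra $\Z_p[[\Z_p^n]]\cong \Z_p[[T_1,\ldots,T_n]]$.

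Next I would combine this with the previous proposition, which identifies $H^*(\Z_p,\Z/p)\cong \Lambda_{\Z/p}(e)$, a one-variable exterior algebra. Using the elementary fact that a tensor product of one-generator exterior algebras is an exterior algebra on the union of the generators,
$$
\Lambda_{\Z/p}(e)^{\otimes n}\cong \Lambda_{\Z/p}(e_1,\ldots,e_n),
$$
one obtains the claimed algebra isomorphism, where $e_i$ denotes the image under the $i$-th inclusion of $\Lambda_{\Z/p}(e)$ into the tensor product.

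It remains to identify the classes $e_i$ with a dual basis of $\Z_p^n/(p)\cong H_1(\Z_p^n,\Z/p)$. This is essentially a naturality statement: let $\pi_i:\Z_p^n\to \Z_p$ be the $i$-th projection; then $e_i$ is the pullback $\pi_i^*(e)$, which under the identification $H^1(\Z_p^n,\Z/p)=\Hom_{cts}(\Z_p^n,\Z/p)$ is the composite of $\pi_i$ with the reduction map $\Z_p\to\Z/p$. These $n$ homomorphisms form the standard dual basis of $\Z_p^n/(p)$, which identifies $\Lambda_{\Z/p}(H^1(\Z_p^n,\Z/p))$ with the right-hand side and completes the proof.

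The only real subtlety is justifying the Künneth formula in the continuous/profinite setting, but this is standard once one notes that the coefficient ring is a field and the factorwise cohomology is of finite type; no further obstacle arises, which is why the corollary can be stated as an immediate consequence.
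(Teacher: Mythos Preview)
Your proposal is correct and follows exactly the paper's approach: the paper simply prefaces the corollary with ``Via the Kuenneth theorem we get the following corollary'' and marks it with \qed, and your argument is a faithful elaboration of that one-line justification. The additional care you take in checking that K\"unneth applies in the profinite setting and in identifying the $e_i$ as the dual basis via the projections is entirely appropriate and goes slightly beyond what the paper spells out.
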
 

An interesting feature of the stabilizer groups is that although they do not contain 
abelian subgroups of rank $>n$ (i.e. free $\Z_p$ submodules of rank $>n$) 
they do contain finite index subgroups which look abelian of rank $n^2$ 
from the point of view of mod-$p$ cohomology. The following result follows from \cite{Lz}. 

\begin{prop}\label{AbelianCoh} {\ } 

\noindent
a) Let $p>2$  and let $i=\frac{k}{n}\geq 1$. Then 
$$
H^*(F_i,\Z/p)\cong \Lambda_{\Z/p}(H^1(F_i,\Z/p))\cong \Lambda_{\Z/p}(e_{i,j}) 
$$ 
where $0\leq i,j\leq n-1$ and $e_{i,j}$ is dual to to $1+\omega^iS^{k+j}$. 

\noindent
b) For $p=2$ the same result holds if $i=\frac{k}{n}> 1$. 
\end{prop}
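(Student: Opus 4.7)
The strategy is to recognize each $F_i$ in the stated range as a \emph{saturated} $p$-valued pro-$p$ group in the sense of Lazard and to invoke the theorem of \cite{Lz} (Chapitre V) which identifies the mod-$p$ cohomology of such a group with an exterior algebra on $H^1$.

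First I would observe that the restriction of the filtration (\ref{filt}) to $F_i$ endows $F_i \setminus \{e\}$ with a $p$-valuation, assigning the value $j/n$ to any element of $F_{j/n} \setminus F_{(j+1)/n}$. The defining axioms---that the filtration degree of a commutator $[g,h]$ is at least the sum of those of $g$ and $h$, and that the $p$-th power obeys $\vp$ as defined before Proposition \ref{gradlie}---are exactly the content of Proposition \ref{gradlie}. The hypothesis $i \geq 1$ for $p$ odd (resp.\ $i > 1$ for $p = 2$) is equivalent to saying that every non-trivial element of $F_i$ has filtration degree strictly greater than $1/(p-1)$; this is the \emph{saturation} condition. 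In this range Proposition \ref{gradlie}(b) gives $P \bar a = \bar a$, so the $p$-th power map induces the identity $\FF_{p^n} \to \FF_{p^n}$ from $gr_{j/n} F_i$ to $gr_{j/n + 1} F_i$.

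Lazard's theorem then yields $H^*(F_i, \Z/p) \cong \Lambda_{\Z/p}(H^1(F_i, \Z/p))$, and it remains to identify the generators. The relation $P \bar a = \bar a$ implies $\overline{F_i^p} = F_{i+1}$, since the induced map $gr_{j/n} F_i \to gr_{(j+n)/n} F_i$ is an isomorphism for all $j/n \geq i$. Lemma \ref{commutator} gives $[F_i, F_i] \subset F_{2i} \subset F_{i+1}$, using $i \geq 1$ in both cases. Hence the mod-$p$ Frattini quotient is
$$
F_i / \overline{[F_i, F_i] F_i^p} \;=\; F_i / F_{i+1} \;\cong\; \bigoplus_{j=0}^{n-1} gr_{i + j/n} F_i.
$$
Each summand is isomorphic to $\FF_{p^n}$ via (\ref{gr-iso}), and $\{1, \omega, \ldots, \omega^{n-1}\}$ is an $\FF_p$-basis of $\FF_{p^n}$ since $\omega$ is a primitive $(p^n - 1)$-th root of unity. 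Translating back, $\{1 + \omega^i S^{k+j} : 0 \leq i, j \leq n-1\}$ is an $\FF_p$-basis of $F_i / F_{i+1}$, and the dual basis $\{e_{i,j}\}$ spans $H^1(F_i, \Z/p)$.

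The main obstacle is verifying cleanly that the graded identities of Proposition \ref{gradlie} assemble into a $p$-valuation on $F_i$ in the sense of \cite{Lz} and that the saturation hypothesis $\omega(g) > 1/(p-1)$ holds throughout $F_i$; once this structural input is in place, the identification of $H^1$ is routine bookkeeping on the graded pieces, and the exterior algebra structure is Lazard's theorem applied verbatim.
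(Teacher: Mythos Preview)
Your argument is correct and is precisely the standard route through Lazard: exhibit the filtration as a $p$-valuation with values $>1/(p-1)$, note that the $p$-th power map is bijective on associated graded (hence the group is saturated), invoke the Lazard isomorphism, and read off a basis of the Frattini quotient. The paper itself gives no proof beyond the attribution ``follows from \cite{Lz}'', so your proposal is not merely consistent with the paper's approach but in fact supplies the details the paper omits.

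One terminological quibble: what you call ``the saturation condition'' is really the condition that the valuation take values $>1/(p-1)$, which is part of Lazard's definition of a $p$-valuation. Saturation in Lazard's sense is the surjectivity statement you establish separately, namely that the $p$-th power induces isomorphisms $gr_{j/n}\to gr_{j/n+1}$ so that every element of filtration $>p/(p-1)$ is a $p$-th power. You verify both ingredients, so the argument goes through; just keep the two notions distinct when you write it up.
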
 
\medskip 

\begin{cor} The mod-$p$ cohomology ring of $S_n$ is a noetherian 
algebra over $\Z/p$. 
\end{cor}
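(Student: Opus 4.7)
The plan is to apply the Lyndon--Hochschild--Serre spectral sequence
$$E_2^{s,t} = H^s(G, H^t(F_i, \Z/p)) \Longrightarrow H^{s+t}(S_n, \Z/p)$$
attached to the extension $1 \to F_i \to S_n \to G \to 1$ of profinite groups, where $i$ is chosen (as permitted by Proposition \ref{AbelianCoh}) large enough that $H^*(F_i,\Z/p)$ is a finite-dimensional exterior algebra on $n^2$ generators over $\Z/p$: concretely, $i \geq 1$ for $p$ odd and $i > 1$ for $p=2$. The subgroups $F_i$ are open and normal in $S_n$ by the discussion surrounding (\ref{filt}), and the quotient $G := S_n/F_i$ is a finite $p$-group.

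First I would show that the $E_2$-page is a finitely generated, hence noetherian, bigraded $\Z/p$-algebra. This rests on two classical facts about the finite $p$-group $G$: the Venkov--Evens theorem that $H^*(G,\Z/p)$ is itself a finitely generated $\Z/p$-algebra, and the complementary statement that $H^*(G,M)$ is finitely generated as a module over $H^*(G,\Z/p)$ for any finite-dimensional $\Z/p[G]$-module $M$. Since $H^t(F_i,\Z/p)$ is non-zero only in the finitely many rows $0 \leq t \leq n^2$ and is finite-dimensional in each, the bigraded algebra $E_2$ is generated over $\Z/p$ by finitely many bihomogeneous classes: algebra generators of $H^*(G,\Z/p)$ in the bottom row, together with module generators of $H^*(G, H^t(F_i,\Z/p))$ in each higher row.

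Because $E_{r+1}$ is a subquotient bigraded algebra of $E_r$, the $E_\infty$-page is again a finitely generated $\Z/p$-algebra. The spectral sequence has only finitely many non-zero rows so it collapses at a finite page, and the induced filtration on each $H^k(S_n,\Z/p)$ has finitely many steps. Lifting a finite bihomogeneous generating set of $E_\infty$ to $H^*(S_n,\Z/p)$ and arguing by induction on filtration degree, these lifts generate $H^*(S_n,\Z/p)$ as a $\Z/p$-algebra. A finitely generated graded-commutative $\Z/p$-algebra is noetherian, yielding the corollary.

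The main obstacle is the final passage from $E_\infty$ to $H^*(S_n,\Z/p)$: one must exploit the multiplicative structure of the spectral sequence to realise products in $E_\infty$ as products in $H^*(S_n,\Z/p)$ modulo higher filtration. This is a standard argument for strongly convergent multiplicative spectral sequences with finite filtration in each degree, the latter being guaranteed here by the finite number of non-zero rows.
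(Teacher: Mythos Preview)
Your approach is exactly the one the paper has in mind: the Lyndon--Hochschild--Serre spectral sequence for $1\to F_i\to S_n\to G\to 1$ with $G$ finite and $H^*(F_i,\Z/p)$ finite-dimensional. The ingredients you assemble (Evens--Venkov for $H^*(G,\Z/p)$, and finite generation of $H^*(G,M)$ as an $H^*(G,\Z/p)$-module) are the right ones.

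There is, however, a genuine gap in the sentence ``Because $E_{r+1}$ is a subquotient bigraded algebra of $E_r$, the $E_\infty$-page is again a finitely generated $\Z/p$-algebra.'' Subquotients of finitely generated (even noetherian) $\Z/p$-algebras need not be finitely generated: the kernel of a derivation on a noetherian ring is merely a subalgebra, and subalgebras of noetherian rings are not noetherian in general. So this inference, as stated, does not go through.

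The fix is already implicit in what you wrote. Observe that the bottom row $E_2^{*,0}=H^*(G,\Z/p)$ consists of permanent cycles (the target of $d_r$ on $E_r^{*,0}$ lies in negative $t$-degree), so by the Leibniz rule every differential $d_r$ is $H^*(G,\Z/p)$-linear. You already noted that $E_2$ is a finitely generated \emph{module} over the noetherian ring $H^*(G,\Z/p)$; now use that subquotients of finitely generated modules over a noetherian ring are again finitely generated. This gives that $E_\infty$ is a finitely generated $H^*(G,\Z/p)$-module. Since the bottom-row edge map is inflation $H^*(G,\Z/p)\to H^*(S_n,\Z/p)$, lifting module generators through the finite filtration shows $H^*(S_n,\Z/p)$ is a finitely generated $H^*(G,\Z/p)$-module, hence a finitely generated $\Z/p$-algebra. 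Replace the algebra-subquotient step by this module-theoretic argument and your proof is complete.
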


\begin{proof} This follows from Proposition \ref{AbelianCoh} 
by analyzing the spectral sequence of the group 
extension $1\to F_i\to S_n\to S_n/F_i\to 1$.  
\end{proof}
\medskip

\begin{defn} Let $p$ be any prime. A profinite $p$-group is called a 
{\it Poincar\'e duality group of dimension $d$} if 

$\bullet$ $H^s(G,\Z/p)$ is finite dimensional for each $s\geq 0$ 

$\bullet$ $H^d(G,\Z/p)\cong \Z/p$ 

$\bullet$ The cup product $H^s(G,\Z/p)\times H^{d-s}(G,\Z/p)\to H^d(G,\Z/p)$ is a nondegenerate 
bilinear form for each $s>0$. 
\end{defn} 
\medskip

\noindent
\underbar{Examples} a) $\Z_p^d$ is a Poincar\'e duality group of dimension $d$. 

\noindent 
b) $F_iS_n$ is a Poincar\'e duality group of dimension $n^2$ whenever $i=\frac{k}{n}\geq 1$ if $p>2$,  
and whenever $i>1$ if $p=2$. 
\medskip

\begin{thm}\label{STV}\cite{Lz} Suppose that $G$ is a profinite $p$-group without 
torsion which contains a finite index subgroup which is a Poincar\'e duality group of dimension $n$.  
Then $G$ is itself a Poincar\'e duality group of dimension $n$. 
\end{thm}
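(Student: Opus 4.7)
The plan is to transport the Poincar\'e duality property from the finite-index subgroup $H$ up to $G$ using the dualizing module of a profinite group of finite cohomological dimension. First, by replacing $H$ with $N := \bigcap_{g \in G} gHg^{-1}$, a finite intersection of open subgroups and hence an open normal subgroup of $G$, one may assume that $H$ is normal in $G$. The intersection inherits the Poincar\'e duality property in dimension $n$ because any open subgroup of a PD group is itself PD of the same dimension---a standard consequence of restriction and cup products being compatible, iterated along the finitely many intermediate inclusions. After this reduction $G/H$ is a finite $p$-group, while $G$ is still torsion-free.

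Next, by Serre's theorem on the cohomological dimension of torsion-free profinite groups, the hypothesis that $G$ is torsion-free combined with $\mathrm{cd}_p(H) = n < \infty$ forces $\mathrm{cd}_p(G) = n$. The torsion-free hypothesis on $G$ is used precisely and only here. Finiteness of $H^s(G, \F_p)$ in each degree then follows from the corresponding finiteness for $H$ via the Hochschild--Serre spectral sequence for $1 \to H \to G \to G/H \to 1$, since $G/H$ is finite.

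The heart of the argument is the analysis of the dualizing module $D_G := H^n_{cts}(G, \F_p[[G]])$, continuous cohomology with $G$ acting on $\F_p[[G]]$ by right translation. Its decisive property is locality under restriction: $D_G|_H \cong D_H$ as $H$-modules, which comes from Shapiro's lemma together with the decomposition of $\F_p[[G]]$ as a finite sum of translates of $\F_p[[H]]$. Since $H$ is PD of dimension $n$ with trivial orientation---the top cohomology $H^n(H, \F_p) = \F_p$ carries trivial $H$-action in the definition given---one has $D_H \cong \F_p$ with trivial $H$-action. Therefore $D_G \cong \F_p$ as an abelian group, with $G$-action factoring through the finite $p$-group $G/H$.

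Finally, since $\mathrm{Aut}(\F_p) \cong \F_p^\times$ has order $p-1$ coprime to $p$, the only $p$-group action on $\F_p$ is trivial. Hence $D_G \cong \F_p$ with trivial $G$-action, and by the standard dualizing-module criterion this is equivalent to $G$ being a Poincar\'e duality group of dimension $n$. The main obstacle is establishing the restriction property $D_G|_H \cong D_H$: this is a standard but technical piece of foundational pro-$p$ homological algebra. A more elementary spectral-sequence argument using pointwise Poincar\'e duality for $H$ at the $E_2$-page of Hochschild--Serre is in principle possible but considerably messier, because $H^*(G/H, -)$ contributes in unbounded degrees and controlling the collapse essentially requires the same dualizing-module input.
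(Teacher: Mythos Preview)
The paper does not give a proof of this theorem; it is simply cited as a result of Lazard \cite{Lz}. Your argument is correct and is essentially the standard modern proof via the dualizing module, as found for instance in Serre's \emph{Galois Cohomology} or in \cite{NSW}. The key inputs---Serre's theorem that $\mathrm{cd}_p(G)=\mathrm{cd}_p(H)$ for torsion-free $G$ with open $H$ of finite $\mathrm{cd}_p$, the restriction formula $D_G|_H\cong D_H$ for the dualizing module, and the observation that a pro-$p$ group can only act trivially on $\FF_p$---are all standard and correctly invoked.

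One minor remark: your justification that $D_H\cong\FF_p$ has trivial $H$-action (``the top cohomology $H^n(H,\FF_p)=\FF_p$ carries trivial $H$-action in the definition given'') is slightly beside the point; the real reason is the same order argument you later use for $G/H$, namely that $H$ is pro-$p$ and $\Aut(\FF_p)$ has order prime to $p$. This does not affect the validity of the proof.

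It may be worth noting that Lazard's original approach in \cite{Lz} is rather different in flavour: he works through the theory of $p$-valued groups and their associated Lie algebras, and deduces Poincar\'e duality from the corresponding statement for Lie algebra cohomology. Your cohomological argument is more direct and does not require any of that machinery, which is an advantage; on the other hand, Lazard's framework is what underlies Proposition~\ref{AbelianCoh} and much of the surrounding discussion in these notes.
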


\subsection{The reduced norm and a decomposition of $S_n$}\label{reduced}   

If $n=2$ and $p>3$ then $S_2$ is torsionfree and hence it is a Poincar\'e duality group of 
dimension $2^2=4$. In fact, we can even reduce to the case of a Poincar\'e duality group $3$ 
as follows. 

In the case of general $n$ and $p$ we consider  $\cO_n$ as a left $\W(\FF_{p^n})$-module of rank 
$n$. Multiplying on the right gives a multiplicative homomorphism 
$$
\cO_n\to M_n(\W(\FF_{p^n}))
$$ 
and hence 
$$
\SS_n\to GL_n(\W(\FF_{p^n}))\ . 
$$
Following this by the determinant gives a homomorphism $\SS_n\to (\W(\FF_{p^n}))^{\times}$ which is 
invariant with respect to the natural actions of $Gal(\FF_{p^n}:\FF_p)$. 
On the other hand we have noted in the remark preceeding Definition \ref{stab} that 
the Galois action on $\SS_n$ is induced by conjugation 
by the element $S$ in $D_n^{\times}$.  
It follows that the determinant restricted to $\SS_n$ takes its values in the 
Galois invariant part $\Z_p^{\times}$ of $\W(\FF_{p^2})^{\times}$. 

If $n=2$ this can also be seen by an easy calculation as follows. If we choose $1$ and $S$ as 
$\W(\FF_{p^2})$ basis for $\cO_2$ then right multiplication defines 
$$
\cO_2\to M_2(\W(\FF_{p^2})),\  a+bS\mapsto \begin{pmatrix} a & pb^{\sigma}\\ b & a^{\sigma}
\end{pmatrix} 
$$ 
with obviously Galois-invariant determinant.   

The resulting homomorphism $\SS_n\to \Z_p^{\times}$ is often called the {\it reduced norm}. 
Restricted to the central $\Z_p^{\times}$ in $\SS_n$ the reduced norm is given by the $n$-th power  
map. By restricting to the $p$-Sylow subgroup and assuming that $p$ does not divide $n$ 
we get a splitting of the sequence 
$$
1\to S_n^1\to S_n\to P(\Z_p^{\times})\to 1 
$$ 
where $P(\Z_p^{\times})$ is the $p$-Sylow subgroup of $\Z_p^{\times}$.

\smallskip

\begin{prop} Suppose $p$ does not divide $n$. Then the group 
$S_n$ is isomorphic to the direct product of its subgroups $S_n^1$ and $P(\Z_p^{\times})$, i.e. 
$$
S_n\cong 
\begin{cases} 
S_n^1\times \{g\in \Z_p^{\times}\ |\ g\equiv 1\mod (p)\} & p>2  \\
S_n^1\times \Z_2^{\times} & p=2 \ .\ \ \ \qed\\
\end{cases} 
$$
\end{prop}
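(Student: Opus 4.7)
The plan is to use the reduced norm $\nu\colon \SS_n\to \Z_p^{\times}$ constructed immediately before the statement, together with the central copy of $\Z_p^{\times}$ inside $\SS_n$, to produce a central section of $\nu|_{S_n}$. Once one has a section whose image lies in the centre of $S_n$, the splitting of the short exact sequence
$$
1\to S_n^1\to S_n\xrightarrow{\nu|_{S_n}} P(\Z_p^{\times})\to 1
$$
automatically gives a direct product decomposition, because the multiplication map $S_n^1\times P(\Z_p^{\times})\to S_n$ is then a group homomorphism and, by the splitting, a bijection.

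First I would identify the centre of $\SS_n$ with $\Z_p^{\times}$. The defining relation $Sw=w^{\sigma}S$ forces any element of $\W(\FF_{p^n})$ commuting with $S$ to lie in the Galois fixed subring $\W(\FF_{p^n})^{\Gal(\FF_{p^n}:\FF_p)}=\Z_p$; combined with the commutativity of $\W(\FF_{p^n})$ itself this shows that the centre of $\cO_n$ is $\Z_p$ and hence $Z(\SS_n)=\Z_p^{\times}$. Since $S_n$ is a pro-$p$ group, its intersection with this central $\Z_p^{\times}$ is the $p$-Sylow subgroup $P(\Z_p^{\times})$. This provides a central inclusion $\iota\colon P(\Z_p^{\times})\hookrightarrow S_n$.

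The heart of the argument is the observation, already made in the text preceding the statement, that $\nu\circ \iota$ is the $n$-th power map: a central $g\in \Z_p^{\times}$ acts on the rank-$n$ free left $\W(\FF_{p^n})$-module $\cO_n$ as the scalar $g\cdot\mathrm{Id}$, whose determinant is $g^n$. Under the hypothesis $p\nmid n$, I would then check that $g\mapsto g^n$ is an automorphism of $P(\Z_p^{\times})$: for $p$ odd, $P(\Z_p^{\times})\cong \Z_p$ topologically (via the $p$-adic logarithm), on which the $n$-th power map becomes multiplication by the unit $n\in \Z_p^{\times}$; for $p=2$ the integer $n$ is odd, hence $(-1)^n=-1$ on the torsion factor $\{\pm 1\}$ of $\Z_2^{\times}\cong \{\pm 1\}\times \Z_2$, while on the torsion-free factor the map is again multiplication by the unit $n$.

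Consequently $\nu|_{S_n}$ is surjective and $s:=\iota\circ(\nu\circ \iota)^{-1}$ is a section whose image lies in the centre of $S_n$, yielding the desired decomposition $S_n\cong S_n^1\times P(\Z_p^{\times})$. No single step is really difficult, but the point requiring the most attention is the parity analysis at $p=2$, where $P(\Z_2^{\times})$ fails to be pro-cyclic; the odd-prime case is essentially immediate from the $p$-adic logarithm.
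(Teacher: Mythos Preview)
Your proposal is correct and follows exactly the argument the paper sketches immediately before the proposition: the reduced norm restricted to the central $\Z_p^{\times}\subset\SS_n$ is the $n$-th power map, and under the hypothesis $p\nmid n$ this map is an automorphism of the pro-$p$ part $P(\Z_p^{\times})$, yielding a central splitting of the short exact sequence. The paper gives no further proof (the statement carries a \qed), and your write-up simply supplies the details of that sketch, including the case distinction at $p=2$.
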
 
\medskip

\subsection{Cohomology in case $n=2$ and $p>2$}\label{coh-n=2-p>3}

\subsubsection{The case $p>3$}   
In this case we have 
$$
S_2\cong S_2^1\times \{g\in \Z_p^{\times}\ |\ g\equiv 1\mod (p)\}\cong S_2^1\times \Z_p\ 
$$ 
The group $S_2$ is a Poincar\'e duality group of dimension $4$, hence $S_2^1$ 
is a Poincar\'e duality group of dimension $3$. Calculating its mod $p$-cohomology 
is therefore easy. By Poincar\'e duality it is enough to calculate $H^1(S_2^1,\FF_p)$. From 
Corollary \ref{H1}  we obtain the following result. 
\smallskip

\begin{thm}\cite{HennRes}  Let $p>3$. Then 
$$
H^*(S_2^1,\Z/p)\cong 
\begin{cases} 
\Z/p & *=0,3 \\
(\Z/p)^2 &* =1,2\\ 
0      & *>3 \ \ \ . \ \ \qed  \\
\end{cases}  
$$
\end{thm}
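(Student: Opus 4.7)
The plan is to combine the product decomposition of $S_2$ with Poincar\'e duality and the computation of $H_1$ already established in Corollary~\ref{H1}.

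First, I would establish that $S_2^1$ is a Poincar\'e duality group of dimension $3$. Since $n=2$ is not divisible by $p-1$ when $p>3$, Corollary~\ref{tors} tells us that $S_2$ is torsion-free. Proposition~\ref{AbelianCoh} shows that the finite-index subgroup $F_1 S_2$ is a Poincar\'e duality group of dimension $n^2=4$, so by Theorem~\ref{STV} the whole group $S_2$ is also a Poincar\'e duality group of dimension $4$. Now invoke the splitting from Section~\ref{reduced}: since $p\nmid 2$, we have $S_2\cong S_2^1\times P(\Z_p^\times)$, and for $p>2$ the factor $P(\Z_p^\times)=\{g\in\Z_p^\times\ |\ g\equiv 1\mod(p)\}$ is isomorphic to the additive group $\Z_p$, itself a Poincar\'e duality group of dimension $1$. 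Writing the K\"unneth formula with $\Z/p$-coefficients, one reads off that $S_2^1$ must be a Poincar\'e duality group of dimension $3$ and is torsion-free as a subgroup of $S_2$.

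Next, I compute $H^1(S_2^1,\Z/p)$. By Corollary~\ref{H1} (applied with $n=2$, $p>2$) we have $H_1(S_2,\Z/p)\cong(\Z/p)^3$, and since $S_2$ is a finitely generated pro-$p$ group the universal coefficient theorem identifies $H^1(S_2,\Z/p)\cong\Hom(H_1(S_2,\Z/p),\Z/p)\cong(\Z/p)^3$. Applying the K\"unneth formula in degree $1$ to $S_2\cong S_2^1\times\Z_p$ yields
$$
(\Z/p)^3\cong H^1(S_2,\Z/p)\cong H^1(S_2^1,\Z/p)\oplus H^1(\Z_p,\Z/p)\cong H^1(S_2^1,\Z/p)\oplus \Z/p,
$$
so $H^1(S_2^1,\Z/p)\cong(\Z/p)^2$.

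Finally, I finish by invoking Poincar\'e duality for $S_2^1$ in dimension~$3$: it gives $H^0(S_2^1,\Z/p)\cong H^3(S_2^1,\Z/p)\cong\Z/p$, it gives $H^2(S_2^1,\Z/p)\cong H^1(S_2^1,\Z/p)\cong (\Z/p)^2$, and it gives vanishing in all degrees $*>3$. This matches the statement. The main subtle point to be careful about is the appeal to the K\"unneth formula in the continuous/profinite setting; but since both $S_2^1$ and $\Z_p$ have finite-dimensional mod-$p$ cohomology in each degree (ensured by Proposition~\ref{AbelianCoh} via the Hochschild--Serre argument in the Corollary following it), the usual K\"unneth isomorphism is available. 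Everything else is a direct unraveling of structure results already collected in the excerpt.
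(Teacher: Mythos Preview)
Your proof is correct and follows exactly the same line of argument as the paper: the paper notes that $S_2$ is a Poincar\'e duality group of dimension $4$, deduces via the product decomposition $S_2\cong S_2^1\times\Z_p$ that $S_2^1$ is a Poincar\'e duality group of dimension $3$, and then invokes Corollary~\ref{H1} to determine $H^1$. You have simply made each of these steps explicit.
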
 

\subsubsection{The case $p=3$} 

The cases $n=2$ and $p=2,3$ are considerably more complicated. In this case the groups $S_n$ 
do contain $p$-torsion and they are no longer Poincar\'e duality groups. In fact, their $vcd_p$ is infinite. 
We will be content to discuss the case $p=3$. 
\smallskip
For $p=3$ we still have the decomposition 
$$
S_2\cong S_2^1\times \{g\in \Z_p^{\times}\ |\ g\equiv 1\mod (p)\}\cong S_2^1\times \Z_p
$$ 
and the problem is again reduced to the case of $S_2^1$. Even though the group $S_2^1$ 
is not a Poincar\'e duality group it contains one of 
index 9, namely the group $F_1S_2^1=F_1S_2\cap S_2^1$. In fact, it even contains one of index $3$. 
In order to see this we consider the formula for the $3$-rd power map 
$$
P:F_{\frac{1}{2}}\to F_1,\  \bar a \mapsto \bar a +{\bar a}^{1+3+9}\ . 
$$ 
This shows that if if there is an element $g\in S_2^1$ of order $3$ then it has the form 
$g=1+aS\mod F_1$ with $\bar a ^4=-1$.  Thus if we define $K$ to be the kernel of the homomorphism  
$S_2^1\to S_2^1/F_1S_2^1\cong \FF_9\to \FF_9/\FF_3$ 
then $K$ is torsion-free and by Theorem \ref{STV} it is a Poincar\'e duality group of dimension $3$.

\begin{prop}\cite{HennDuke}
$$
H^*(K,\Z/3)\cong 
\begin{cases} 
\Z/3  & *=0,3 \\
(\Z/3)^2 & *=1,2 \\
0           & *>3 \ .\\
\end{cases} 
$$
\end{prop}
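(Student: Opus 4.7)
Since $K$ has just been shown to be a torsion-free pro-$3$ group containing the Poincar\'e duality group $F_1S_2^1$ as a finite-index subgroup, Theorem~\ref{STV} makes $K$ itself into a Poincar\'e duality group of dimension~$3$. This immediately yields $H^0(K,\Z/3)=H^3(K,\Z/3)=\Z/3$, $H^i(K,\Z/3)=0$ for $i>3$, and $\dim H^1(K,\Z/3) = \dim H^2(K,\Z/3)$. Since $K$ is a torsion-free pro-$3$ group, $\dim_{\FF_3}H^1(K,\Z/3)$ equals the minimal number of topological generators of $K$, i.e.\ $\dim_{\FF_3}H_1(K,\Z/3) = \dim_{\FF_3}\bigl(K/\overline{[K,K]K^3}\bigr)$. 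The whole problem thus reduces to proving that this dimension equals~$2$.

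To carry this out, I would use the filtration $F_iK := K\cap F_i\SS_2$, which gives $K = F_{1/2}K\supset F_1K = F_1S_2^1\supset F_{3/2}S_2^1\supset\cdots$. A short reduced-norm calculation identifies the graded pieces: $gr_{1/2}K\cong\FF_3\hookrightarrow\FF_9$ (by the very definition of $K$); $gr_iK = gr_iS_2^1 = \ker(\mathrm{tr}\colon\FF_9\to\FF_3)\cong\FF_3$ for integer $i\geq1$; and $gr_iK = gr_iS_2^1 = \FF_9$ for half-integer $i\geq 3/2$. I would then apply the commutator formula of Proposition~\ref{gradlie}(a) and the $p$-th power formula of Proposition~\ref{gradlie}(b), level by level, to see which pieces survive in the abelianization modulo~$3$.

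The central arithmetic fact is that in characteristic~$3$ the two lines $\FF_3\subset\FF_9$ and $\ker(\mathrm{tr})\subset\FF_9$ are complementary. At level $3/2$ this illustrates the mechanism neatly: $P\colon gr_{1/2}K\to gr_{3/2}K$ sends $\bar a\mapsto\bar a+\bar a^{13}=-\bar a$ and so hits $\FF_3$, whereas the commutator $gr_{1/2}K\otimes gr_1K\to gr_{3/2}K$ sends $(\bar a,\bar b)\mapsto \bar a\bar b^3-\bar b\bar a = \bar a(\bar b^3-\bar b) = \bar a\bar b$ (using $\bar b^3=-\bar b$ on $\ker(\mathrm{tr})$) and so hits $\ker(\mathrm{tr})$. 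Together these exhaust $\FF_9$, forcing $gr_{3/2}(K^{ab}/3)=0$. Entirely analogous computations, combining the commutators $[gr_{1/2}K,gr_{i-1/2}K]$ and $[gr_1K,gr_{i-1}K]$ with the shift $P$ (which is the identity $\bar a\mapsto\bar a$ on graded pieces above level~$1/2$), force $gr_i(K^{ab}/3)=0$ for every $i\geq 3/2$. On the other hand $[gr_{1/2}K,gr_{1/2}K]=0$ (the field $\FF_3$ is commutative and Frobenius is trivial on it) and $P(gr_{1/2}K)\subset gr_{3/2}K$, so nothing maps into $gr_{1/2}K$ or $gr_1K$; these two $\FF_3$-lines survive and give $\dim H_1(K,\Z/3)=2$.

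The main obstacle is the systematic bookkeeping at each filtration level $i\geq 3/2$: one must check that the finitely many commutator pairings $[gr_jK,gr_{i-j}K]$ (with $j\in\{1/2,1\}$), together with the $P$-image from $gr_{i-1}K$ or $gr_{i-1/2}K$, jointly span $gr_iK$. Each individual verification is a short computation in the same spirit as the proof of Lemma~\ref{commutator}, but one has to keep track uniformly of which $\FF_3$-subspaces of $\FF_9$ (namely $\FF_3$, $\ker(\mathrm{tr})$, and their scalings by the field structure) arise from each operation.
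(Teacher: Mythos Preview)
Your approach is correct and follows essentially the same route as the paper: reduce to $H_1$ via Poincar\'e duality (Theorem~\ref{STV}), then analyze the induced filtration $F_iK=K\cap F_iS_2$ using the graded mixed Lie algebra. The paper's proof is terser: it asserts without detail that $H_1(K,\Z_3)\cong\Z/9\oplus\Z/3$, naming explicit generators $b=[a,\omega]$ and $c=[a,b]$ built from the order-$3$ element $a$ and the root of unity $\omega$ (neither of which lies in $K$, but the commutators do). Your version instead computes $\dim H_1(K,\Z/3)$ directly by showing $\mathfrak m_i:=\sum_j[gr_jK,gr_{i-j}K]+P(gr_{\varphi^{-1}(i)}K)$ exhausts $gr_iK$ for $i\geq 3/2$; this is exactly the technique used in the paper's own proofs of Propositions~5.2 and~5.3, so it fits the surrounding exposition well. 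One small simplification worth noting: once you observe that $P\colon gr_{i-1}K\to gr_iK$ is the identity for every $i\geq 2$ (both sides are $\ker(tr)$ for integer $i$ and $\FF_9$ for half-integer $i$), the ``systematic bookkeeping'' you flag as the main obstacle collapses---$P$ alone already surjects for $i\geq 2$, and only the single level $i=3/2$ needs the explicit $\FF_3\oplus\ker(tr)=\FF_9$ argument you gave.
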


\begin{proof} Because $K$ is without torsion Theorem \ref{STV} implies that 
it is a Poincar\'e duality group (of dimension $3$). So it is enough to calculate 
$H^1(K,\Z/3)\cong H_1(K,\Z/3)$. For this we consider the filtration on $K$ given 
by $F_iK:=K\cap F_1S_2$.  It is not hard to check that 
$H_1(K,\Z_3)\cong \Z/9\oplus \Z/3$ generated by $b:=[a,\omega]$  and $c=[a,b]$ where as before 
$$
a=-\frac{1}{2}(1+\omega S)
$$ 
is the element of order $3$ of  (\ref{def-a})  and $\omega$ is a primitive $8$th root of unity in 
$\W(\FF_9)$. This implies the desired result. 
\end{proof}
\smallskip

The cohomology of $S_2^1$ can now be calculated by using the (non-central) split exact sequence 
$$
1\to K\to S_2^1\to \Z/3\to 1\ . 
$$ 
The quotient map $S_2^1\to \Z/3$ makes $H^*(S_2^1,\Z/3)$ into a module over the 
polynomial algebra generated by $y\in H^2(\Z/3,\Z/3)$. 
It is true (but far from obvious) that this spectral sequence degenerates at $E_2$. In fact, 
it is equivalent to knowing that $H^*(S_2^1,\Z/3)$ is a free module over the polynomial algebra 
$\Z/3[y]$. Using that we obtain the following result. 
\smallskip 

\begin{thm}\cite{HennDuke} Let $p=3$. 
Then $H^*(S_2^1,\Z/3)$ is a free module over $\Z/3[y]$ on $8$ generators in degrees  
$0,1,1,2,2,3,3,4$. 
\end{thm}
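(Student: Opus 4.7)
The natural approach is the Lyndon--Hochschild--Serre spectral sequence for the split extension $1\to K\to S_2^1\to \Z/3\to 1$, namely
\[
E_2^{p,q}=H^p(\Z/3,H^q(K,\Z/3))\Longrightarrow H^{p+q}(S_2^1,\Z/3),
\]
with $y\in H^2(S_2^1,\Z/3)$ taken to be the pullback of the polynomial generator of $H^2(\Z/3,\Z/3)$ under $S_2^1\to \Z/3$.

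The first task is to determine the $\Z/3$-action on $H^*(K,\Z/3)$. The action on $H^0(K)$ is obviously trivial, and the action on $H^3(K)$ is trivial because the section acts on $K$ by inner automorphisms of $S_2^1$ and so preserves the orientation class of the Poincar\'e duality group $K$. For the generators $b=[a,\omega]$ and $c=[a,b]$ of $H_1(K,\Z_3)=\Z/9\cdot b\oplus \Z/3\cdot c$ from the proof of the previous proposition, the identity $aba^{-1}=c\cdot b$ gives $a_*(\bar b)=\bar b+\bar c$ in $H_1(K,\Z/3)$, and using $a^3=1$ one finds $a_*(\bar c)=\bar c$. Hence $H^1(K,\Z/3)$ and, by Poincar\'e duality on $K$, also $H^2(K,\Z/3)$, are isomorphic as $\Z/3[\Z/3]$-modules to the indecomposable two-dimensional module $M:=\Z/3[t]/(t-1)^2$. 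A direct Tate-cohomology computation gives $H^p(\Z/3,M)=\Z/3$ for every $p\ge 0$, free over $\Z/3[y]$ on two generators in degrees $0$ and $1$. Combined with $H^*(\Z/3,\Z/3)=\Z/3[y]\otimes\Lambda(x)$ on rows $q=0,3$, this identifies $E_2$ as a free $\Z/3[y]$-module on the eight classes lying in the two leftmost columns, with total degrees $0,1,1,2,2,3,3,4$.

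The crucial step is collapse at $E_2$. The section induces a retraction $H^*(S_2^1,\Z/3)\to H^*(\Z/3,\Z/3)$, which kills all differentials targeting the bottom row, so $E_\infty^{*,0}=E_2^{*,0}$. The remaining potentially nonzero differentials are $d_2:E_2^{*,q}\to E_2^{*+2,q-1}$ for $q=2,3$ and $d_3:E_3^{*,3}\to E_3^{*+3,1}$; since each $d_r$ is a derivation over $\Z/3[y]$, it suffices to evaluate them on the six non-bottom-row generators. I would combine multiplicativity with the Poincar\'e duality cup product $H^1(K)\otimes H^2(K)\to H^3(K)$ --- under which products of the generators detect the top row --- to derive contradictions from any nonvanishing differential. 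Gaps in this argument I would close by comparing with the cohomology of a finite-index Poincar\'e duality subgroup such as $F_1S_2^1$, whose cohomology is the exterior algebra of Proposition \ref{AbelianCoh}; restriction from $S_2^1$ gives a lower bound on $\dim H^n(S_2^1,\Z/3)$ which, matched against $\dim E_2^n$, forces collapse.

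Granted collapse, freeness of $H^*(S_2^1,\Z/3)$ over $\Z/3[y]$ is formal: lift the eight $E_\infty$-generators, and use injectivity of multiplication by $y$ on $E_\infty$ to conclude that the lifts and their $y^k$-multiples form a $\Z/3$-basis. The main obstacle is collapse itself, which Henn calls ``far from obvious''; Poincar\'e duality holds on $K$ but not on $S_2^1$, so the multiplicative/duality argument must be delicate, and likely needs supplementing by a dimension count from $F_1S_2^1$ or a comparable subgroup.
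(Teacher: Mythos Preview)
Your approach is exactly the one the paper sketches: the Lyndon--Hochschild--Serre spectral sequence for $1\to K\to S_2^1\to\Z/3\to 1$, with collapse at $E_2$ equivalent to freeness over $\Z/3[y]$. Your identification of the $E_2$-page is correct and more explicit than what the notes write out; the claim $a_*\bar c=\bar c$ follows cleanly from $(a_*-1)^2=0$ on the two-dimensional space, and the action on $H^3(K,\Z/3)\cong\Z/3$ is trivial simply because $\Aut(\Z/3)\cong\Z/2$ has no $3$-torsion (your ``inner automorphism'' justification is not quite the right reason).

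The genuine gap is in your proposed attack on collapse. Comparison with $F_1S_2^1$ cannot give the lower bound you want: the index $[S_2^1:F_1S_2^1]=9$ is a power of $3$, so there is no transfer splitting and restriction need not be injective---you get no dimension inequality this way. The multiplicativity idea is more reasonable but fails at the first step: if one writes $M=\FF_3\{e_1,e_2\}$ with $M^{\Z/3}=\FF_3 e_1$, any $\Z/3$-equivariant perfect pairing $M\times M\to\Z/3$ must satisfy $\langle e_1,e_1\rangle=0$, so the product $E_2^{0,1}\cdot E_2^{0,2}\to E_2^{0,3}$ vanishes and row $3$ is not detected by products of the row-$1$ and row-$2$ generators. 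Hence Poincar\'e duality on $K$ alone does not constrain the crucial $d_2$ and $d_3$ differentials.

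The paper does not prove collapse in these notes either; it simply asserts it and cites \cite{HennDuke}. The method there is the one appearing in the \emph{next} theorem of the notes: restriction to the centralizers $C_{S_2^1}\langle a\rangle$ and $C_{S_2^1}\langle\omega a\omega^{-1}\rangle$, each isomorphic to $\Z/3\times\Z_3$, is a monomorphism on $H^*$. That centralizer-detection statement is the real engine; once one has it, the free $\Z/3[y]$-module structure is read off from the known target, and the spectral sequence collapse is a consequence rather than an independent route.
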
 

The cup product structure is also known. It can be approached as follows. Up to conjugacy 
there are two subgroups of order $3$ in $S_2^1$, namely the subgroup $\langle a\rangle$ 
generated by $a$ and the subgroup $\langle \omega a\omega^{-1}\rangle$. 
The centralizers of these elements are  isomorphic and 
$C_{S_2^1}\langle a\rangle \cong \langle a\rangle\times \Z_3$. 
The cup product structure is detemined by the following result. 
\medskip

\begin{thm}\cite{HennDuke}
a) The restriction homomorphisms induce a monomorphism 
$$
H^*(S_2^1,\Z/3)\to H^*(C_{S_2^1}\langle a\rangle,\Z/3)\times 
H^*(C_{S_2^1}\langle \omega a\omega^{-1}\rangle,\Z/3)
$$
whose target is isomorphic to $\prod_{i=1}^2\Z/3[y_i]\otimes\Lambda_{\Z/3}(x_i,a_i)$ 
where the elements $y_i$ are of degree $2$ and $x_i$ and $a_i$ are of degree $1$.  

\noindent
b) This map is an isomorphism in degrees $>2$. Its image in degree $0$ is the diagonal, in degree $1$ 
it is the subspace generated by $x_1$ and $x_2$ and in degree $2$ the subspace generated by 
$y_1$, $y_2$ and $x_1a_1-x_2a_2$. 

\noindent
c) The image is a free module over $\Z/3[y_1+y_2]$ on the following $8$ generators: $1$, $x_1$, $x_2$, 
$y_1-y_2$, $x_1a_1-x_2a_2$, $y_1a_1$, $y_2a_2$, $y_1x_1a_1+y_2x_2a_2$. 
\end{thm}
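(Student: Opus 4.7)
My plan is to identify the two centralizers and their cohomology by K\"unneth, compute the restriction of each of the eight free $\Z/3[y]$-generators of $H^*(S_2^1,\Z/3)$ via the Lyndon--Hochschild--Serre spectral sequence for $1\to K\to S_2^1\to \Z/3\to 1$, and then read off injectivity and the free-module structure simultaneously from a linear-independence check in the product.

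Since each centralizer is isomorphic to $\Z/3\times \Z_3$, K\"unneth gives
$$
H^*(C_i,\Z/3)\cong \Z/3[y_i]\otimes \Lambda_{\Z/3}(x_i,a_i)
$$
with $|x_i|=|a_i|=1$, $y_i=\beta x_i$, $x_i$ dual to the order-$3$ generator and $a_i$ dual to a topological generator of $\Z_3$. This identifies the target ring in (a). Neither $a$ nor $\omega a\omega^{-1}$ lies in $K$, so the composition $\langle a_i\rangle\hookrightarrow S_2^1\twoheadrightarrow S_2^1/K\cong \Z/3$ is an isomorphism, and the class $y\in H^2(S_2^1,\Z/3)$ pulled back from $H^2(\Z/3,\Z/3)$ therefore restricts to $y_i$ in each centralizer; i.e.\ to the pair $(y_1,y_2)$ in the product.

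The key step is to compute the restrictions of the eight generators of $H^*(S_2^1,\Z/3)$ in degrees $0,1,1,2,2,3,3,4$. The degree-$0$ class goes to the diagonal. The two degree-$1$ classes factor through $S_2^1\to \Z/3$ and restrict to $x_1$ and $x_2$. For degree $2$, one generator is $y$ itself, restricting to $(y_1,y_2)$; the other arises from the $H^1(\Z/3,H^1(K,\Z/3))$ column of the Lyndon--Hochschild--Serre spectral sequence, and using the explicit $\Z/3$-action on $H^*(K,\Z/3)$ together with equivariance under conjugation by $\omega$ (which interchanges the two conjugacy classes of order-$3$ subgroups) forces its restriction to be $x_1a_1-x_2a_2$ up to a unit. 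The remaining generators in degrees $3$ and $4$ are then determined by multiplicativity and $\omega$-equivariance. Inspection shows these eight restrictions are $\Z/3[y_1+y_2]$-linearly independent inside $\prod_i\Z/3[y_i]\otimes \Lambda(x_i,a_i)$, and since $H^*(S_2^1,\Z/3)$ is free of rank $8$ over $\Z/3[y]$ with $y$ mapping to $y_1+y_2$, this is enough to conclude injectivity, proving (a). For (b), the image has dimensions $1,2,3$ in degrees $0,1,2$ by the explicit computation, while in degrees $>2$ both source and target have graded dimension $4$, so injectivity forces surjectivity there. Part (c) is then an immediate restatement of the identification of the image as the free $\Z/3[y_1+y_2]$-module on these eight classes.

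The main obstacle is the explicit identification of the degree-$2$ generator's restriction as $x_1a_1-x_2a_2$ rather than $x_1a_1+x_2a_2$ or some other combination. This requires a careful Lyndon--Hochschild--Serre analysis for $1\to K\to S_2^1\to \Z/3\to 1$ combined with the $\omega$-equivariance that swaps the two centralizers, which in turn leans on the module structure of $H^*(K,\Z/3)$ over $\Z/3$ recorded in Section \ref{coh-n=2-p>3}. Once this sign is pinned down, the rest of the theorem falls out by naturality, multiplicativity, and Poincar\'e series bookkeeping.
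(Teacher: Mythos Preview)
The paper does not actually prove this theorem; it is stated with a citation to \cite{HennDuke} and no argument is supplied beyond the remark that the cup product structure ``can be approached'' via restriction to the two centralizers. So there is no proof in the present paper to compare your proposal against.

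Your overall strategy is the right one and is essentially what is carried out in \cite{HennDuke}, but two concrete errors break the argument as you have written it. First, you claim that both degree-$1$ generators of $H^1(S_2^1,\Z/3)$ factor through the quotient $S_2^1\to S_2^1/K\cong\Z/3$. This is impossible: $H^1(\Z/3,\Z/3)$ is one-dimensional while $H^1(S_2^1,\Z/3)$ is two-dimensional, so inflation supplies at most one of the two classes. The other degree-$1$ generator comes from $H^0(\Z/3,H^1(K,\Z/3))$ in the Lyndon--Hochschild--Serre spectral sequence, and its restriction to the two centralizers has to be computed separately; getting the image in degree $1$ to be spanned by $x_1$ and $x_2$ requires producing both $(x_1,x_2)$ and something like $(x_1,-x_2)$, and you have only accounted for the first. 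Second, you assert that one of the two degree-$2$ free-module generators over $\Z/3[y]$ is $y$ itself. This contradicts freeness, since $y=y\cdot 1$ already lies in the $\Z/3[y]$-span of the degree-$0$ generator. Part (c) in fact tells you what the two degree-$2$ generators must restrict to, namely $y_1-y_2$ and $x_1a_1-x_2a_2$; the three-dimensional image in degree $2$ described in (b) then comes from these two together with $y\cdot 1\mapsto y_1+y_2$. Both errors stem from a misidentification of which classes in the spectral sequence for $1\to K\to S_2^1\to\Z/3\to 1$ furnish the free $\Z/3[y]$-basis, and until that bookkeeping is corrected neither the injectivity in (a) nor the image descriptions in (b) and (c) follow from what you have written.
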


\noindent
\subsubsection{The case $p=2$} The case of the prime $2$ is even more complicated 
but it is also understood (cf. \cite{HennRes} and the recent Northwestern 
theses of Beaudry and Bobkova). 
\medskip

\section{Cohomology with non-trivial coefficients and resolutions}

For homotopy theoretic applications we will be interested in calculating cohomology 
with certain non-trivial coefficients, in particular $H^*(\GG_n,(E_n)_*)$. 
For this we use explicit resolutions of the trivial module. 
In this section we will discuss the classical case $n=1$ in fair detail and briefly 
comment on the case $n=2$.

\subsection{The case $n=1$}\label{n=1}  
In the case $n=1$ we have already seen such resolutions for the group $S_1$, at least if $p>2$. 
More precisely we have seen in (\ref{Iwasawa-res}) seen that there is a free resolution 
$$
0\to P_1\buildrel{t-e}\over\longrightarrow P_0\to \Z_p\to 0 
$$ 
of the trivial $\Z_p[[\Z_p]]$-module with $P_0=P_1=\Z_p[[\Z_p]]$ and $t$ a topological generator 
of $\Z_p$. In the case of $\SS_1=\GG_1$ 
we can use the same resolutions but enriched as resolutions by $\Z_p[[\GG_1]]$-modules. 
In fact because of the product decomposition $\GG_1=\Z_p^{\times}\cong \Z_p\times F$ 
where $F=\{\pm 1\}$ if $p=2$ resp. $F={\Z/p}^{\times}$ if $p>2$,  
every $\Z_p[[\Z_p]]$-module resp. every $\Z_p[[\Z_p]]$-module homomorphism 
can be considered as a $\Z_p[[\GG_1]]$-module resp. $\Z_p[[\GG_1]]$-module homomorphism 
via the projection map $\GG_1\to \Z_p$. Of course, in this case the modules are no longer 
free modules. However, if $p$ is odd they are still projective and in case $p=2$ 
they are at least permutation modules. Indeed as $\Z_p[[\GG_1]]$-modules we always have 
$P_0=P_1=\Z_p[[\GG_1/F]]=\Z_p[[\GG_1]]\otimes_{\Z_p[F]}\Z_p$ 
and the trivial $\Z_p[F]$-module $\Z_p$ is projective if $p>2$ because the order of $F$ is prime to 
$p$ in this case. But even  in the case $p=2$ this permutation resolution  
is useful for calculating group cohomology. 
In fact, it gives rise to a long exact sequence (with $R=\Z_p[[\GG_1]]$ and $\psi$ 
denoting a topological generator of $\Z_p^{\times}/F\cong \Z_p$) 
$$
\ldots 
\to \Ext^s_R(\Z_2,M)\to \Ext^s_R(P_0,M)
\buildrel{\psi -id}\over\longrightarrow \Ext^s_R(P_1,M)\to \Ext^{s+1}_R(\Z_2,M)\to \ldots 
$$ 
which can be identified by definition of $H^*$ and by using Shapiro's Lemma with 
\begin{equation}\label{exact-seq}
\ldots \to H^s(\GG_1,M)\to H^s(F,M)\buildrel{\psi -id}\over\longrightarrow H^s(F,M)
\to H^{s+1}(\GG_1,M)\to \ldots \ . 
\end{equation}

\subsubsection{The case $p>2$}\label{p>2} 
\smallskip 
If $p>2$ the groups in the middle of (\ref{exact-seq}) are trivial unless $s=0$. 
Now we consider the graded module $M=(E_1)_*=\Z_p[u^{\pm 1}]$ with $|u|=-2$. The action of 
$\GG_1=\Z_p^{\times}$ on this graded algebra is by algebra homomorphisms 
and is thus specified by the action on the polynomial generator $u$. It is the tautological action 
$(g,u)\mapsto g.u$. Then we get 
$$
H^*(F,\Z_p[u^{\pm 1}])=
\begin{cases} 
\Z_p[u^{\pm (p-1)}] & s=0 \\
0 & s\neq 0\ . \\ 
\end{cases} 
$$ 
For $\psi$ we can take the element $p+1\in\Z_p^{\times}$. Then 
$$
(\psi-id)_*(u^{t(p-1)})=((p+1)^{t(p-1)}-1)u^{t(p-1)}=cp^{\nu_p(t)+1}u^{t(p-1)} 
$$ 
where $\nu_p(t)$ is the $p$-adic valuation of the integer $t$ and $c$ is a unit modulo $p$. 
This proves the following result. 
\smallskip

\begin{thm} Let $p$ be an odd prime. Then 
$$
H^s(\SS_1,(E_1)_t)=
\begin{cases} 
\Z_p  & t=0,\ s=0,1 \\ 
\Z/p^{\nu_p(t')+1} & t=2(p-1)t', s=1 \\
0                         & else \ . \ \ \ \ \qed \\ 
\end{cases} 
$$
\end{thm}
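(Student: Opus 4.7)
The plan is to feed $M = (E_1)_* = \Z_p[u^{\pm 1}]$ into the long exact sequence (\ref{exact-seq}) and extract everything in one sweep. First I would invoke the vanishing already noted: because $|F|=p-1$ is prime to $p$, one has $H^s(F,(E_1)_*)=0$ for $s>0$, and $H^0(F,(E_1)_*)=\Z_p[u^{\pm(p-1)}]$ since $F$ acts on $u$ through $(p-1)$-th roots of unity. The long exact sequence therefore collapses to the four-term exact sequence
$$0\to H^0(\SS_1,(E_1)_*)\to \Z_p[u^{\pm(p-1)}]\buildrel{\psi-id}\over\longrightarrow \Z_p[u^{\pm(p-1)}]\to H^1(\SS_1,(E_1)_*)\to 0,$$
together with the vanishing $H^s(\SS_1,(E_1)_*)=0$ for all $s\geq 2$.

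Next I would read off kernel and cokernel of $\psi - id$ summand by summand, taking $\psi = 1+p$, a topological generator of $\Z_p^{\times}/F \cong \Z_p$. On the generator $u^{t'(p-1)}$, the map $\psi - id$ acts as multiplication by $(1+p)^{t'(p-1)}-1$. In the degree $t=0$ summand this multiplier vanishes, so degree zero contributes $\Z_p$ to both $H^0(\SS_1,(E_1)_0)$ and $H^1(\SS_1,(E_1)_0)$. For degree $t=2(p-1)t'$ with $t'\neq 0$, the key $p$-adic identity $(1+p)^{t'(p-1)}-1 = c\,p^{\nu_p(t')+1}$ with $c\in \Z_p^{\times}$ makes the map injective with cyclic cokernel $\Z/p^{\nu_p(t')+1}$, contributing entirely to $H^1$. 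In all other degrees both source and target are already zero. Assembling the pieces yields the stated formula.

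The real content is the valuation identity $\nu_p((1+p)^k - 1) = \nu_p(k)+1$ for $p$ odd and $k\neq 0$, with unit leading coefficient. I would prove it by expanding $(1+p)^k = 1 + kp + \binom{k}{2}p^2 + \cdots$ and showing that the linear term dominates: for $i\geq 2$, the standard bound $\nu_p(\binom{k}{i}) \geq \nu_p(k) - \nu_p(i)$ together with $\nu_p(i!) = (i - s_p(i))/(p-1)$ yields $\nu_p(\binom{k}{i}p^i) \geq \nu_p(k)+2$. Oddness of $p$ enters precisely here, to ensure that the $i=2$ term (where the denominator $2!$ would otherwise be dangerous) strictly exceeds rather than ties with the linear term. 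This valuation estimate is the only technical obstacle; everything else is formal manipulation of (\ref{exact-seq}) combined with the already-computed $F$-cohomology.
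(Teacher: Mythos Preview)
Your argument is correct and is exactly the paper's approach: reduce via the long exact sequence (\ref{exact-seq}) to the kernel and cokernel of $\psi-id$ on $H^0(F,(E_1)_*)=\Z_p[u^{\pm(p-1)}]$ with $\psi=1+p$, and then invoke the valuation identity $(1+p)^{t'(p-1)}-1=c\,p^{\nu_p(t')+1}$. The paper in fact states this identity without proof, so your binomial-expansion justification of it is more detailed than what the paper provides.
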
 

\noindent
Then the homotopy fixed point spectral sequence (\ref{descentSS}) 
$$
E_2^{s,t}=H^s(\SS_1,(E_1)_t)\Longrightarrow \pi_{t-s}(L_{K(1)}S^0)
$$ 
collapses by sparseness, and we get 
the following result which is essentially equivalent to Ravenel's calculation of $\pi_*L_1S^0$ 
(cf. \cite{Ravloc}).  
\smallskip

\begin{thm} Let $p$ be an odd prime. Then  
$$
\pi_n(L_{K(1)}S^0)\cong 
\begin{cases} 
\Z_p & n=0,-1 \\
\Z/p^{\nu_p(t')+1} & n=2(p-1)t'-1 \ . \ \ \ \ \qed \\
\end{cases}
$$
\end{thm}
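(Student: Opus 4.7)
The plan is to run the Devinatz--Hopkins homotopy fixed point spectral sequence (\ref{descentSS})
$$
E_2^{s,t} = H^s(\SS_1,(E_1)_t) \Longrightarrow \pi_{t-s}(L_{K(1)}S^0)
$$
and read off the abutment using the previous theorem, with sparseness doing essentially all of the work.

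First I would observe that the previous theorem confines the $E_2$-page to the two cohomological rows $s=0$ and $s=1$: the only nonzero groups are $E_2^{0,0} = \Z_p$, $E_2^{1,0}=\Z_p$, and $E_2^{1,2(p-1)t'}=\Z/p^{\nu_p(t')+1}$ for nonzero integers $t'$. Since $d_r$ has bidegree $(r,r-1)$ with $r\geq 2$, every potential differential starts in cohomological degree $\leq 1$ and lands in cohomological degree $\geq r \geq 2$, i.e.\ in a vanishing row. Hence the spectral sequence collapses at $E_2$, giving $E_\infty = E_2$.

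Second, for each integer $n$ at most one pair $(s,t)$ with $t-s=n$ supports a nonzero $E_\infty^{s,t}$: for $n=0$ only $(0,0)$ contributes, for $n=-1$ only $(1,0)$ contributes, and for $n=2(p-1)t'-1$ with $t'\neq 0$ only $(1,2(p-1)t')$ contributes; for all other $n$ the contributions vanish. Thus the descending filtration on $\pi_n(L_{K(1)}S^0)$ has at most one nontrivial subquotient, so there is no extension problem, and reading off the surviving $E_\infty^{s,t}$ yields the stated formula.

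The genuine content lies upstream, in the statement of (\ref{descentSS}) --- strong convergence of the homotopy fixed point spectral sequence with $E_2$ identified with continuous group cohomology, which is provided by \cite{DH} --- together with the explicit cohomology calculation of the previous theorem. Given both inputs, the argument here is a matter of bookkeeping; the only point requiring a little care is distinguishing the $\Z_p$ in $\pi_{-1}$ coming from $H^1(\SS_1,(E_1)_0)$ from the $p$-torsion summands in the cases $n=2(p-1)t'-1$ with $t'\neq 0$, which is why the second clause of the theorem is to be read with $t'$ nonzero.
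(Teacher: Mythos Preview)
Your argument is correct and matches the paper's approach exactly: the paper simply notes that the homotopy fixed point spectral sequence (\ref{descentSS}) ``collapses by sparseness'' given the previous theorem, and you have spelled out precisely what that sparseness argument amounts to. Your additional remark about the absence of extension problems is a useful elaboration that the paper leaves implicit.
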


\subsubsection{The case $p=2$}\label{p=2}

For $p=2$ we get 
\begin{equation}\label{C2-descent}
H^s(C_2,\Z_2[u^{\pm 1}])=
\begin{cases} 
\Z_2[u^{\pm 2}]  & s=0 \\
\Z/2[u^{\pm 2}]\{y^{s'}\}  & s=2s' \\
\Z/2[u^{\pm 2}]\{y^{s'}x\}  & s=2s'+1\\
\end{cases} 
\end{equation} 
with $y\in H^2(C_2,\Z_2)$, $x\in H^1(C_2,\Z_2\{u\})$. 
We note that this group cohomology 
is the $E_2$-term for the homotopy fixed point spectral sequence  converging to 
$\pi_*KO\Z_2=\pi_*KU^{hC_2}$. The bidegree of $u$, $y$ and $x$ are $|u|=(0,-2)$, $|y|=(2,0)$ and 
$|x]=(1,-2)$. The full multiplicative stucture is determined by the relation $x^2=yu^2$. 
One can thus rewrite this $E_2$-term as 
\begin{equation}
E_2^{*,*}=\Z/2[u^{\pm 2},\eta]/(2\eta)
\end{equation} 
with $\eta=xu^{-2}$. The notation is chosen so as to agree with usual notation in homotopy theory, i.e. 
$\eta \in E_2^{1,2}$ is a permanent cycle which represents the image of 
$\eta\in \pi_1^S$ in $\pi_1(KO\Z_2)$. 
\smallskip
In the homotopy fixed point spectral sequence 
there is a single differential. In fact, in the sphere we have $\eta^4=0$ 
so $\eta^4$ has to be hit by a differential. There is only one way how this can happen, namely via 
\begin{equation}\label{d3-diff}
d_3(u^{-2})=\eta^3\ . 
\end{equation}
The spectral sequence is multiplicative. Therefore we get 
$$
E_4^{s,t}=\Z_2[u^{\pm 4}]\{2u^2,\eta, \eta^2\}/(2\eta) \ ,  
$$
the spectral sequence degenerates at $E_4$ and we find the well known 
homotopy groups of $\pi_*(KO\Z_2)$ given as 
$$
\pi_s(KO\Z_2)\cong 
\begin{cases} 
\Z_2 & s\equiv 0,4\mod(8)\\ 
\Z/2  & s\equiv 1,2 \mod(8)  \\
0      & s\equiv 3,5,6,7 \mod(8)\ . \\
\end{cases} 
$$ 
Independently of this homotopy theoretic calculation we can use (\ref{C2-descent}) 
and the long exact sequence 
(\ref{exact-seq}) in order to calculate $H^s(\GG_1,(E_1)_*)$. 
The induced homomorphism in $H^s$ is trivial 
if $s>0$ because $\psi$ necessarily acts trivially on $H^s(C_2,(E_1)_t)$ if $s>0$ 
(because the group is either trivial or $\Z/2$). 
For $s=0$ we have a similar phenomenon as before, namely we take for $\psi$ the element 
$3=1+2\in \Z_2^{\times}$.  
Because of 
$$
(\psi-id)_*(u^{2t})=((1+2)^{2t}-1)u^{2t}=((1+8)^t-1)u^{2t}
=c2^{\nu_2(t)+3}u^{2t} 
$$ 
with $c$ a unit modulo $2$ we obtain the following result. 

\smallskip

\begin{thm} Let $p=2$. Then 
$$
H^s(\SS_1,(E_1)_t)\cong 
\begin{cases} 
\Z_2                      & t=0, s=0,1 \\
\Z/(2^{\nu_2(t')+3}) & s=1, t=4t'\neq 0 \\
\Z/2                       & s=1, t=4t'+2  \\
\Z/2                       & s\geq 2, t\ even \\
0                           & else \ . \ \ \ \qed \\
\end{cases}  
$$
\end{thm}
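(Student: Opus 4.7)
The plan is to feed the module $M = (E_1)_* = \Z_2[u^{\pm 1}]$ into the long exact sequence (\ref{exact-seq}) coming from the two-step permutation resolution of the trivial $\Z_2[[\GG_1]]$-module discussed in Section \ref{n=1}, with $F = C_2 = \{\pm 1\}$ and $\psi = 3 \in \Z_2^{\times}$ a chosen topological generator of $\Z_2^{\times}/C_2 \cong \Z_2$. Recall that $\psi$ acts on $u$ by $u \mapsto 3u$. Since $H^*(C_2,(E_1)_*)$ has already been computed in (\ref{C2-descent}), the whole problem reduces to identifying the action of $\psi - id$ on these groups.

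The first key observation is that for every $s > 0$ the group $H^s(C_2,(E_1)_t)$ is an $\F_2$-module by (\ref{C2-descent}); since $\psi$ acts on $u$ by multiplication by $3 \equiv 1 \pmod 2$, it acts trivially on these $\F_2$-modules, so that $\psi - id = 0$ on $H^s(C_2,(E_1)_*)$ for all $s > 0$. In particular the long exact sequence degenerates, for $s \geq 2$, into short exact sequences
\begin{equation*}
0 \to H^{s-1}(C_2,(E_1)_t) \to H^s(\SS_1,(E_1)_t) \to H^s(C_2,(E_1)_t) \to 0.
\end{equation*}
Inspection of (\ref{C2-descent}) shows that for $t$ odd both outer terms vanish, while for each even $t$ exactly one of the two outer terms is zero and the other is $\Z/2$ (the pattern depending on $t \pmod 4$ and the parity of $s$). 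This immediately yields $H^s(\SS_1,(E_1)_t) = \Z/2$ for all $s \geq 2$ and even $t$, disposing of the bulk of the statement at no cost.

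The remaining cases $s = 0$ and $s = 1$ concentrate all the arithmetic. For $s = 0$ we just need $\ker(\psi - id)$ on $H^0(C_2,(E_1)_t) = \Z_2\{u^{-t/2}\}$ (nonzero only when $t \equiv 0 \pmod 4$), on which $\psi - id$ acts by multiplication by $3^{-t/2} - 1$; this is nonzero in $\Z_2$ unless $t = 0$, recovering the asserted $\Z_2$ for $t=0$ and $0$ otherwise. For $s = 1$ the long exact sequence reads
\begin{equation*}
0 \to \mathrm{coker}(\psi - id)\big|_{H^0(C_2,(E_1)_t)} \to H^1(\SS_1,(E_1)_t) \to H^1(C_2,(E_1)_t) \to 0,
\end{equation*}
again because $\psi - id = 0$ on $H^1$. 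For $t = 0$ this yields $\Z_2$, and for $t \equiv 2 \pmod 4$ it yields $\Z/2$. The main obstacle, and really the only step that is not essentially formal, is the remaining case $t = 4t' \neq 0$: here we need the cokernel of multiplication by $3^{-2t'} - 1 = 9^{-t'} - 1$ on $\Z_2$. A short binomial-expansion argument, writing $(1+8)^k - 1 = 8k + \binom{k}{2}\,64 + \ldots$, establishes $\nu_2(9^k - 1) = 3 + \nu_2(k)$ for every $k \in \Z_2 \setminus \{0\}$, whence the cokernel is $\Z/2^{\nu_2(t') + 3}$, matching the stated formula and completing the case analysis.
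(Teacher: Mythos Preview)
Your proof is correct and follows exactly the paper's approach: both use the long exact sequence (\ref{exact-seq}) with $F=C_2$ and $\psi=3$, observe that $\psi-id$ vanishes on $H^s(C_2,(E_1)_*)$ for $s>0$ because these groups are all $\Z/2$ or trivial, and reduce everything to the valuation computation $\nu_2((1+8)^k-1)=3+\nu_2(k)$ on $H^0$. Your write-up is somewhat more explicit about the $s\geq 2$ bookkeeping, but the argument is the same.
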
  
\smallskip

In cohomological degrees $s\geq 2$ the $E_2$-term of the 
homotopy fixed point spectral sequence (\ref{descentSS}) 
$$
E_2^{s,t}=H^s(\SS_1,(E_1)_t)\Longrightarrow \pi_{t-s}(L_{K(1)}S^0)
$$ 
agrees with the algebra $\Z_2[u^{\pm 2},\eta,\zeta]/(2\eta,\zeta^2)$ 
with $\zeta\in H^1(\SS_1,\Z_2[u^{\pm1}]_0)$. In cohomological degree $s=0$ the $E_2$-term is 
isomorphic to $\Z_2$ concentrated in internal degree $t=0$. In bidegrees $(1,4t'+2)$ 
it is isomorphic to $\Z/2$ generated by $\eta u^{-2t'}$ 
and in bidegrees $(1,4t')$ 
it is isomorphic to  $\Z/(2^{\nu_2(t')+3})$ resp. $\Z_2$ generated by $\zeta u^{-2t'}$ if $t'\neq0$ 
resp. by $\zeta$ if $t'=0$. 

To get at the homotopy of $L_{K(1)}S^0$ we still need to understand the differentials in this spectral 
sequence. They are determined via naturality and via the geometric boundary theorem \cite{Br} 
by those for the homotopy fixed point 
spectral sequence for $KO\Z_2$, i.e. by  (\ref{d3-diff}). 
The $d_3$-differential is linear with respect to the permanent 
cycles $\eta$ and $\zeta$,  and it is determined by 
$$
d_3(\eta u^{-2t})=
\begin{cases}
\eta^4u^{-2t+2} & t\equiv 1\mod(2) \\ 
0                       & t\equiv 0\mod(2)\\ 
\end{cases} 
$$
and 
$$
d_3(\zeta u^{-2t})=
\begin{cases} 
\zeta\eta^3u^{-2t+2} & t\equiv 1\mod(2) \\
0                       & t\equiv 0\mod(2)  \ . \\
\end{cases} 
$$
By sparseness all higher differentials are trivial.     
In dimension $\equiv 1,3\mod(8)$ there are still extension problems to be solved. 
This can be done, for example, by comparing with the calculation for the mod-$2$ Moore space.  
In dimension $\equiv 1\mod(8)$ the extensions turn out to be trivial while in dimensions 
$\equiv 3\mod(8)$ they are non-trivial. The final result reads as follows and is essentially once again 
equivalent to Ravenel's calculation of $\pi_*L_1S^0$ in  \cite{Ravloc}.  

\smallskip

\begin{thm} Let $p=2$. Then 
$$
\pi_n(L_{K(1)}S^0)\cong
\begin{cases} 
\Z_2\oplus\Z/2   & \ i=\ 0   \\
\Z/2             & \ i\equiv\ 0\ \mod(8), i\neq 0 \\
\Z/2\oplus \Z/2  & \ i\equiv\ 1\ \mod(8) \\
\Z/2              & \ i\equiv\ 2\ \mod(8)\\
\Z/8              & \ i\equiv\ 3\ \mod(8) \\
\Z_2             & \ i=-1   \\
\Z/(2^{\nu_2(s)+4})     & \ i=8s-1, i\neq -1 \\
0                  & \ \text{otherwise} \ . \ \ \ \ \qed \\
\end{cases}
$$
\end{thm}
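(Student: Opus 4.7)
The plan is to feed the $d_3$-formulas given just above the theorem into the descent spectral sequence, read off $E_\infty$ bidegree by bidegree, and then resolve the remaining additive extensions via comparison with $L_{K(1)}V(0)$.

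Since $\eta$ and $\zeta$ are permanent cycles and $d_3$ is a derivation, the stated formulas propagate to
\[
d_3(\eta^s u^{-2t}) = \eta^{s+3} u^{-2(t-1)}, \qquad d_3(\zeta \eta^{s-1} u^{-2t}) = \zeta \eta^{s+2} u^{-2(t-1)}
\]
when $t$ is odd, and to zero when $t$ is even. Since $E_2^{0,t} = 0$ outside $t = 0$, a bidegree-by-bidegree analysis gives the following. At $s=1$ with $t'$ odd, the $\Z/8$ generated by $\zeta u^{-2t'}$ maps onto the $\Z/2$ generated by $\zeta\eta^3 u^{-2(t'-1)}$, so only the subgroup $\Z/4 \cong \{2\zeta u^{-2t'}\}$ survives, while the $\Z/2$ generated by $\eta u^{-2t'}$ dies; for $t'$ even both families survive intact. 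At $s\in\{2,3\}$ the $d_3$-cycles are precisely the classes $\eta^s u^{-2t'}$ and $\zeta\eta^{s-1} u^{-2t'}$ with $t'$ even, and none are boundaries because the only candidate sources lie at $s'=s-3\leq 0$, where $E_2$ vanishes away from $(0,0)$. For $s\geq 4$ every cycle is a $d_3$-image from $s'=s-3\geq 1$, so $E_\infty^{s,*}=0$. Sparseness then forces $d_r=0$ for $r\geq 4$, and hence $E_4=E_\infty$.

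I would then group surviving classes by the residue $r = n \bmod 8$ of the total degree $n = t-s$. The degrees $n=0$ and $n=-1$ pick up free summands $\Z_2$ from the classes at $(s,t)=(0,0)$ and $(1,0)$. The remaining contributions are: at $r=0$, one $\Z/2$ from $\zeta\eta u^{-2t'}$ at $s=2$; at $r=1$, one $\Z/2$ each from $\eta u^{-2t'}$ at $s=1$ and $\zeta\eta^2 u^{-2t'}$ at $s=3$; at $r=2$, one $\Z/2$ from $\eta^2 u^{-2t'}$ at $s=2$; at $r=3$, a $\Z/4$ from $2\zeta u^{-2t'}$ ($t'$ odd) at $s=1$ together with a $\Z/2$ from $\eta^3 u^{-2t'}$ at $s=3$; and at $r=7$, writing $n=8s-1$, a single $\Z/(2^{\nu_2(s)+4})$ from $\zeta u^{-4s}$. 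Residues $r=4,5,6$ contribute nothing, so $\pi_n(L_{K(1)}S^0)=0$ in those cases. This reproduces the stated groups in every degree except $n\equiv 1\bmod 8$ and $n\equiv 3\bmod 8$, where an extension between two filtration quotients remains to be decided.

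To settle these extensions I would compare with the $K(1)$-local mod-$2$ Moore spectrum $V(0)=S^0/2$ using the cofiber sequence $S^0\buildrel{2}\over\longrightarrow S^0\to V(0)$. The descent spectral sequence for $L_{K(1)}V(0)$ is obtained from ours by reducing coefficients modulo $2$, so its $d_3$-differentials are imported directly and its homotopy can be read off without extension ambiguity. The boundary in the long exact sequence
\[
\cdots \to \pi_n(L_{K(1)}V(0)) \to \pi_{n-1}(L_{K(1)}S^0) \buildrel{2}\over\longrightarrow \pi_{n-1}(L_{K(1)}S^0) \to \cdots
\]
then pins down the $2$-torsion in $\pi_*(L_{K(1)}S^0)$ well enough to decide the extension. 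The outcome is that in $n\equiv 1\bmod 8$ the extension splits, giving $\Z/2\oplus\Z/2$, while in $n\equiv 3\bmod 8$ it is non-trivial, giving $0\to\Z/2\to\Z/8\to\Z/4\to 0$ and hence $\pi_n=\Z/8$. The main obstacle is exactly this last step: determining $E_\infty$ is bookkeeping dictated by the given $d_3$-formulas, but ruling out the split extension in $n\equiv 3\bmod 8$ requires genuine external input, either the $V(0)$-comparison above or an equivalent identification ultimately reflecting the classical relation $\eta^3 = 4\nu$ in the $2$-primary stable stems.
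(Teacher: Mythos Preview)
Your proposal is correct and follows essentially the same route as the paper: propagate the stated $d_3$-differentials via multiplicativity in $\eta$ and $\zeta$, observe that sparseness kills all higher differentials, and resolve the remaining extension ambiguities in degrees $\equiv 1,3 \bmod 8$ by comparison with the mod-$2$ Moore spectrum. You have in fact supplied more bookkeeping detail than the paper's sketch, and you correctly flag that the nontrivial extension in degree $\equiv 3 \bmod 8$ is the one step requiring genuine external input.
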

\medskip

\subsection{Some comments on the case $n=2$} 
\medskip 

As a first step one needs to establish the resolutions of the trivial module described in sections 
\ref{n=2-p>3} and \ref{n=2-p=3} and make them into effective calculational tools.  

For $p>3$ these resolutions are projective minimal resolutions 
which are constructed from the calculations of 
$H^*(S_2^1,\Z/p)$ discussed in section \ref{coh-n=2-p>3}. 
For example, one can construct a resolution of the trivial 
$S_2^1$-module $\Z_p$ of the form 
$$
0\to P_3\to P_2\to P_1\to P_0\to \Z_p\to 0\  
$$
such that $P_0=P_3=\Z_p[[S_2^1]]$ and $P_1=P_2=\Z_p[[S_2^1]]^{\oplus 2}$.  
As in the case $n=1$ this projective resolution can be promoted to one of 
$\GG_2^1$. However, it is not true that $S_2^1$ is a quotient of $\GG_2^1$ 
so the details of promoting this resolution to one of $\GG_2^1$ are not as straightforward. 

Nevertheless the existence of the resolution is fairly formal and knowledge of the existence of such 
resolutions is already quite useful. However, mere existence is not good enough to carry out actual 
calculations like that of $H^*(\GG_2^1,(E_2)_*)$. 
A great deal of work is necessary in order to describe the homomorphism 
in the resolution explicitly, or at least closely enough such that actual calculations 
can be carried out.  Another problem is that of getting sufficient 
control of the action of $\GG_2$ on $(E_2)_*$. 
At least modulo $p$ all these problems have been resolved in the thesis of O. Lader 
\cite{Ld} (cf. section \ref{p>3}). 

In the case of the prime $3$ the resolution has a similar form except that the modules are no 
longer projective (just as in the case $p=2$ and $n=1$ before). Nevertheless in the thesis of Karamanov 
\cite{kar} and in \cite{HKM} this resolution has been made into a very effective computational tool 
(cf. section \ref{p>3}).

As noted in section \ref{p=2a} the case of the prime $2$ is currently actively developped.

\bibliographystyle{amsplain}

\bibliographystyle{amsplain}
\bibliography{bibghm}

\providecommand{\bysame}{\leavevmode\hbox to3em{\hrulefill}\thinspace}
\providecommand{\MR}{\relax\ifhmode\unskip\space\fi MR }
\providecommand{\MRhref}[2]{%
  \href{http://www.ams.org/mathscinet-getitem?mr=#1}{#2}
}
\providecommand{\href}[2]{#2}
\begin{thebibliography}{}

\end{thebibliography}


\begin{thebibliography}{VIASM}



\bibitem{Be} Beaudry, Agn\`es, ``The chromatic splitting conjecture at $n=p=2$", 
arXiv:1502.02190v2 

\bibitem{Bo} Bousfield, A. K., ``The localization of spectra with respect to homology", 
{\it Topology} 18 (1979), no. 4, 257--281

\bibitem{Br} Bruner, R., ``Algebraic and geometric connecting homomorphisms in the
Adams spectral sequence'', {\it Geometric applications of homotopy theory 
(Proc. Conf., Evanston, Ill., 1977), {II}}, 
Lecture Notes in Math., 658, 131-133, Springer-Verlag, Berlin, 1978. 
 
\bibitem{DH} Devinatz, Ethan S. and Hopkins, Michael J.,
``Homotopy fixed point spectra for closed subgroups of the Morava
stabilizer groups'', {\it Topology} 43 (2004), no.1, 1--47

\bibitem{BCdual} Goerss, Paul and Henn, Hans-Werner,  ``The Brown-Comenetz dual of the 
K(2)-local sphere at the prime 3", {\it Advances in Mathematics} 288 (2016), 648--678


\bibitem{GHM} Goerss, Paul and Henn, Hans-Werner and Mahowald, Mark,
``The homotopy of {$L_2V(1)$} for the prime 3'',
{\it Categorical decomposition techniques in algebraic topology
({I}sle of {S}kye, 2001)}, {Progr. Math.}, 213, 125--151, {Birkh\"auser}, {Basel}, 2004


\bibitem{GHMR} Goerss, Paul and Henn, Hans-Werner and Mahowald, Mark. and Rezk, Charles,
``A resolution of the {$K(2)$}-local sphere at the prime 3'',
{\it Ann. of Math. (2)} 162 (2005) no. 2, 777--822

\bibitem{GHM2} Goerss, Paul G. and Henn, Hans-Werner and Mahowald, Mark,  
``The rational homotopy of the K(2)-local sphere and the chromatic splitting conjecture for the prime 3 
and level 2", {\it Doc. Math.} 19 (2014), 1271--1290

\bibitem{Picat3} Goerss, Paul and  Henn, Hans-Werner and Mahowald, Mark and Rezk, Charles, 
``On Hopkins' Picard groups for the prime 3 and chromatic level 2", {\it J. Topol.} 8 (2015), no. 1, 
267--294

\bibitem{GoH} Goerss, Paul and Hopkins, Michael,
``Moduli spaces of commutative ring spectra'', {\it Structured ring spectra},
London Math. Soc. Lecture Note Ser., 315, 151--200,
{Cambridge Univ. Press}, {Cambridge}, 2004

\bibitem{HennDuke} Henn, Hans-Werner,
``Centralizers of elementary abelian {$p$}-subgroups and
mod-{$p$} cohomology of profinite groups'',
{\it Duke Math. J.} 91 (1998), no. 3, 561--585

\bibitem{HennRes} Henn, Hans-Werner,
``On finite resolutions of {$K(n)$}-local spheres'',
{\it Elliptic cohomology}, {London Math. Soc. Lecture Note Ser.} 342, 122--169,
 {Cambridge Univ. Press}, {Cambridge}, 2007 
 
 \bibitem{HKM} Henn, H.-W. and Karamanov, N. and Mahowald, M., 
``The homotopy of the K(2)-local Moore spectrum at the prime 3 revisited'',
{\it Math. Z.} 275 (2013), no. 3-4, 953--1004

\bibitem{HS} Hovey, Mark and Strickland, Neil P., {\it Morava
{$K$}-theories and localisation}, Mem. Amer. Math. Soc. 139 (1999), no. 666

\bibitem{kar} Karamanov, Nasko, ``\`A propos de la cohomologie du deuxi\`eme groupe
stabilisateur de Morava; application aux calculs de $\pi_\ast L_{K(2)}V(0)$ et du
$\mathrm{Pic}_2$ de Hopkins'', Ph.D. thesis, Universit\'e Louis Pasteur (2006)

\bibitem{kar1} Karamanov, Nasko, ``On {H}opkins' {P}icard group
{${\rm Pic}_2$} at the prime 3'', {\it Algebr. Geom. Topol.} 10 (2010),
no. 1, 275--292 

\bibitem{Ld} Lader, Olivier, ``Une r\'esolution projective  pour le seconde groupe de Morava 
pour $p\geq 5$ et applications", Ph. D. thesis, Universit\'e de Strasbourg, (2013),  
https://tel.archives-ouvertes.fr/tel-00875761
 
\bibitem{Lz} Lazard, Michel, ``Groupes analytiques {$p$}-adiques'',
{\it Inst. Hautes \'Etudes Sci. Publ. Math.} 26 (1965), 389--603 

\bibitem{MillerJPAA} Miller, Haynes, ``On relations between {A}dams
spectral sequences, with an application to the stable homotopy of
a {M}oore space'', {\it J. Pure Appl. Algebra} 20 (1981), no. 3, 287-312

\bibitem{NSW} Neukirch, J\"urgen and Schmidt, Alexander and  Wingberg, Kay, 
``Cohomology of number fields",  Second edition,  Grundlehren der Mathematischen Wissenschaften 
323, Springer-Verlag, Berlin, 2008

\bibitem{Ravloc} Ravenel, Douglas C, ``Localization with respect to certain periodic homology theories",  
{\it Amer. J. Math.} 106 (1984), no. 2, 351--414

\bibitem{Ravgreen} Ravenel, Douglas C, ``Complex cobordism and stable homotopy groups of spheres",  
Pure and Applied Mathematics, 121. Academic Press, Inc., Orlando, FL, 1986

\bibitem{shi1} Shimomura, Katsumi, ``The homotopy groups of the
{$L_2$}-localized mod {$3$} {M}oore spectrum'', {\it J. Math. Soc. Japan}
52 (2000), no. 1, 65--90


\bibitem{sh-w} Shimomura, Katsumi and Wang, Xiangjun,
``The homotopy groups {$\pi_*(L_2S^0)$} at the prime 3,
{\it Topology} 41 (2002), no. 6, 1183--1198

\bibitem{sh-y} Shimomura, Katsumi and Yabe, Atsuko, ``The homotopy groups $\pi_*(L_2S^0)$",  
{\it Topology} 34 (1995), no. 2, 261--289

\bibitem{sy} Symonds, Peter,  ``Permutation complexes for profinite groups", 
{\it Comment. Math. Helv.} 82 (2007), no. 1, 1--37






\end{thebibliography}
\bigbreak
\bigbreak

\end{document}